\numberwithin{equation}{section}
\theoremstyle{plain}
\newtheorem{theorem}{Theorem}[section]
\newtheorem{lemma}[theorem]{Lemma}
\newtheorem{corollary}[theorem]{Corollary}
\newtheorem{proposition}[theorem]{Proposition}
\newtheorem{conjecture}[theorem]{Conjecture}
\theoremstyle{definition}
\newtheorem{definition}[theorem]{Definition}
\newtheorem{problem}[theorem]{Problem}
\newtheorem{example}[theorem]{Example}
\theoremstyle{remark}
\newtheorem{remark}[theorem]{Remark}
\newtheorem{case[theorem]}{Case}
  \newtheorem*{Dutkay-Lai}{{\bf Theorem A}}
  \newtheorem*{Fuglede-Gabor-Problem}{{\bf Fuglede-Gabor Problem}}
\def\norm#1.#2.{\lVert#1\rVert_{#2}}
\begin{document}

\title{Non-separable lattices, Gabor orthonormal bases and Tilings}

\author{Chun-Kit Lai and Azita Mayeli}

\date{\today}

\address{Chun-Kit Lai, Department of Mathematics, College of Science and Engineering,  San Francisco State Univesrity, San Francisco, USA} 
\email{cklai@sfsu.edu}
\address{Azita Mayeli, Department of Mathematics and Computer Science, Queensborough and The Graduate Center, City University of New York, USA} 
\email{AMayeli@gc.cuny.edu}

\thanks{The work of the second author was partially supported by the PSC-CUNY Grant  B  cycles 47 and 48.}

  \maketitle 
  
\begin{abstract} Let  $K\subset \Bbb R^d$ be a bounded set  with positive Lebesgue measure. Let $\Lambda=M(\Bbb Z^{2d})$ be a lattice in $\Bbb R^{2d}$ with density dens$(\Lambda)=1$. It is well-known that if $M$ is a diagonal block matrix with diagonal matrices $A$ and $B$,    then
$\mathcal G(|K|^{-1/2}\chi_K, \Lambda)$ is an orthonormal basis for $L^2(\Bbb R^d)$ if and only if $K$ tiles both by  $A(\Bbb Z^d)$ and $B^{-t}(\Bbb Z^d)$. However, there has not been any intensive study when $M$ is not a diagonal matrix. We investigate this problem for a large  class of  important  cases
 of $M$. In particular, if $M$  is any lower block triangular matrix with diagonal matrices $A$ and $B$, we prove that  if $\mathcal G(|K|^{-1/2}\chi_K, \Lambda)$ is an orthonormal basis,   then $K$ can be written as a finite union of fundamental domains of $A({\mathbb Z}^d)$ and at the same time, as a finite union of fundamental domains of $B^{-t}({\mathbb Z}^d)$. If $A^tB$ is an integer matrix, then there is only one common fundamental domain, which means $K$ tiles by a lattice and is spectral. However, surprisingly, we will also illustrate by an example that a union of more than one fundamental domain  is also possible. We also provide a constructive way for forming a Gabor window function for a given upper triangular lattice.
  Our study is related to a Fuglede's type problem in Gabor setting and we give a partial answer to this problem in the case of lattices.
 % Our results  give a partial answer to  the  long standing open question that,  which functions $g$ serves  as a  window function for a Gabor orthonormal basis for $L^2(\Bbb R^d)$,  and solve Liu and Wang\rq{}s Conjecture (\cite{LW03}) for special cases of non-separable lattices with Gabor window  function of the form  $g(x)=|K|^{-1/2}\chi_K$.
\end{abstract}

{\bf Keywords:} Non-separable lattices, Gabor orthonormal bases, tiling and spectral sets.

\section{Introduction}

% \begin{definition}[Gabor systems]\label{Gabor system}
Let $g\in L^2(\Bbb R^d)$,  and let $\Lambda\subset \Bbb R^{2d}$ be  a countable subset. We define the {\it Gabor system} (also known as {\it Weyl-Heisenberg system}) $\mathcal G(g, \Lambda)$ with respect to $g$ and $\Lambda$  to be  the  collection of functions $\pi(a, b)g$ defined by  combined time and frequency shifts of $g$:
$$
\pi(a,b)g(x)= M_{b}T_{a}g =  e^{2\pi i \langle b, x\rangle} g(x-a) \quad (a,b)\in \Lambda .
$$
$\Lambda$ is also known as  {\it time-frequency set}  and
the frequency shift is also called modulation. We say $g$ is an {\it orthonormal Gabor window function}  with respect to $\Lambda$, or simply a window function, if  $\mathcal G(g, \Lambda)$ is an orthonormal basis for $L^2(\Bbb R^d)$. See e.g. 
\cite{OleBook} or \cite{Groechenig-book}.

%\end{definition}

We  call $\Lambda$ {\it separable} if it is of the form of $\Lambda = {\mathcal T}\times \Gamma$ for some countable subsets ${\mathcal T}$ and $\Gamma$ in ${\mathbb R}^d$. Gabor systems  have been introduced for the first time in 1946 by Gabor \cite{Gabor46} and are now  fundamental objects in applied and computational harmonic analysis. Moreover, for $\mathcal G(g, \Lambda)$ to be a Gabor orthonormal basis,  the (Beurling) density of $\Lambda$, denoted by dens$(\Lambda)$, must be $1$ \cite{RS}.

\medskip

The existence of a window function for a given lattice  has been investigated  for several  special cases of $M$. The question of existence has been  completely answered
by Han  and Wang \cite{HanW1}  for separable lattices of the form  $\Lambda= \mathcal J\times \mathcal T$  with  dens$(\Lambda)=1$. They answered the question by showing the existence of a common fundamental domain for two different lattices. Later, the same authors  partially answered the question for non-separable lattices (i.e. the lattices of not of the form of $\mathcal J \times \mathcal T$)  for special cases of matrix $M$ \cite{HanW4}. Indeed, they proved that,  when  for example $M$ is a block triangular matrix, a  window function $g$ exists  and it can be chosen so  that $|g|$ or $|\hat g|$ is the scalar multiple of a characteristic function. They also showed the existence of a window function with compact support for rational matrices  $M$.

\medskip

Given a subset $K\subset \Bbb R^d$,
we denote by $\chi_K$ the indicator function of $K$ and by $|K|$ its Lebesgue measure.
%
%In this paper we study the question that,  given a full-rank lattice $\Lambda = M({\mathbb Z}^{2d})\subset \Bbb R^{2d}$ with dens$(\Lambda)= |\det(M)|^{-1} =1$ and a set $K\subset \Bbb R^d$ of finite positive measure, what is the  necessary   conditions for  $K$ such that   $\mathcal G(|K|^{-1/2} \chi_K, \Lambda)$ is   a Gabor orthonormal basis for $L^2(\Bbb R^d)$?
The main focus of this paper  is  the following. Suppose that $\Lambda = M({\mathbb Z}^{2d})\subset \Bbb R^{2d}$ is a full lattice with Beurling density dens$(\Lambda)= |\det(M)|^{-1} =1$   and suppose that $\mathcal G(|K|^{-1/2} \chi_K, \Lambda)$ forms a Gabor orthonormal basis for $L^2(\Bbb R^d)$. What can we say about the structure of $K$? This question is related to the study of spectral sets and translational tiles which we will call the {\it Fuglede-Gabor Problem} later on.

\medskip

\begin{definition}[Spectral and tiling sets]\label{Spectral and tiling sets}   A Lebesgue measurable set
  $K\subset \Bbb R^d$  with positive  and finite measure is a  {\it spectral set} in $\Bbb R^d$ if there is a countable set $B\subset \Bbb R^d$ (not necessarily unique) such that
  exponentials $\{e_b(x):= e^{2\pi i \langle b,x\rangle} : b\in B, x\in K\}$ constitute an orthogonal basis for   $L^2(K)$, i.e., the exponentials are mutual orthogonal and complete in $L^2(K)$.  In this case $B$ is called a {\it spectrum} for $K$.

  \medskip

 We say $K$ {\it multi-tiles} $\Bbb R^d$ by its translations  if there
 is a countable set ${\mathcal J}\subset \Bbb R^d$ and integer $N\ge 1$ such that
 % such that  $|(K+t)\cap (K+t')|=0$, for all distinct $t, t\rq{}\in \mathcal J$,   and $\bigcup_{t\in {\mathcal J}} K+t$ covers $\Bbb R^d$.  It is easy to check that the tiling condition  is equivalent to say that
\begin{equation}\label{multitile-generic}
\sum_{t\in{\mathcal J}} \chi_{K} (x+t)=N\quad a.e. \ x\in \Bbb R^d .
\end{equation}
If $N=1$, then $K$ {\it tiles} ${\mathbb R}^d$ and  the set ${\mathcal J}$ is called {\it tiling set} for $K$ (For more details about multi-tiles, see e.g. \cite{Kol}).
 \end{definition}

\medskip

  Spectral sets   have been studied extensively in the recent years and their study has been reduced to the study of  tiling sets by the Fuglede Conjecture or Spectral Set Conjecture \cite{Fug74}   which asserts: {\it A set $K\subset \Bbb R^d$ with positive and finite measure is a spectral set if and only if $K$ tiles $\Bbb R^d$ by translations. }
 Fuglede proved the conjecture in his celebrated 1974 paper \cite{Fug74} for the case  when $K$ tiles by a lattice or $K$ has a spectrum which is a lattice. The Fuglede Conjecture led to considerable activity in the past three decades. In
  2004, Tao \cite{T04} disproved the Fuglede conjecture for dimension $5$ and higher, followed by Kolountzakis and Matolcsi\rq{}s result \cite{FMM,KM06}  where they proved  that  the conjecture fails in dimensions $3$ and higher. For more recent results and historical comments see e.g. \cite{BHM16,IMP17}.

\medskip
Spectral sets and tiles appear naturally in the Gabor setting.  Indeed, let $\Lambda= \mathcal J \times \mathcal T$ be a separable countable set (not necessarily a lattice) with  dens$(\Lambda)=1$ and let  $\Omega$ be a compact set  in $\Bbb R^d$  which tiles by $\mathcal J$ and is spectral for $\mathcal T$.  For example, take $\Omega = [0,1]^2$ and ${\mathcal J} = {\mathcal T} = {\mathbb Z}^2$. Let $g$ be a   function supported in $\Omega$.  Then an easy calculation shows that the Gabor system
 $\mathcal G(g, \Lambda)$ is an orthonormal basis  if   $|g(x)|=|\Omega|^{-1/2} \chi_\Omega(x)$
 (see also \cite{LW03}, Lemma 3.1).  We call such Gabor bases {\it standard}.  Liu and Wang \cite{LW03} conjectured the converse of this result that  for  a  compactly supported function $g$ and  a countable separable set $\Lambda= \mathcal J \times \mathcal T$, if $\mathcal G(g, \Lambda)$  is an orthonormal basis for $L^2(\Bbb R^d)$, then there is a compact set $\Omega\subset \Bbb R^d$ such that
 $|g|$ is a constant multiple of $\chi_\Omega$,    $\Omega$ tiles by $\mathcal J$  and is a spectral set for $\mathcal T$. Liu and Wang proved their conjecture when the support of $g$ is an interval.
  Dutkay and the first listed author recently proved that the Liu and Wang\rq{}s conjecture is affirmative if $g$ is non-negative \cite[Theorem 1.8]{Dutkay-Lai}.  % Indeed, they proved that for $g$ is a non-negative function with bounded and measurable support $K$ in $\Bbb R^d$,    $\Lambda=
 %\mathcal J\times \mathcal T$ a countable set with $D(\Lambda)=1$,  if
 %$\mathcal G(g, \Lambda)$ is an orthonormal basis for $L^2(\Bbb R^d)$, then
  %$g=  |K|^{-1/2}\chi_K$ a.e. on $K$ and $K$ tiles by $\mathcal J$ and is a spectral  with respect to $\mathcal T$.
 %(\cite{Dutkay-Lai}, Theorem 1.8 resp. Theorem 6.2)
However, the conjecture  is still unsolved for general compactly supported $g$.

\iffalse
\begin{Dutkay-Lai}\label{Dutkay-Lai}(\cite{Dutkay-Lai}, Theorem 1.8 resp. Theorem 6.2) Suppose that $g$ is a non-negative function with bounded and measurable support $K$ in $\Bbb R^d$. Suppose that    $\Lambda=
 \mathcal J\times \mathcal T$ is a countable set with $D(\Lambda)=1$. If  $\mathcal G(g, \Lambda)$ is an orthonormal basis for $L^2(\Bbb R^d)$, then the following hold.
\begin{itemize}
\item[(i)] $K$ tiles $\Bbb R^d$ by translations by $\mathcal J$.
\item[(ii)] $g=  |K|^{-1/2}\chi_K$ a.e. on $K$.
\item[(iii)] $K$ is a spectral with respect to $\mathcal T$.
\end{itemize}
\end{Dutkay-Lai}

We shall sketch the proof of Theorem A here. The completeness of the Gabor system $\mathcal G(g, \Lambda)$ proves that $\cup_{j\in \mathcal J} K+j$ is a cover for $\Bbb R^d$. The orthogonality implies that the collection of sets $\{K+j: \ j\in \mathcal J\}$ are mutual disjoint up to a Lebesgue measure zero set, thus  $K$ is a tiling set with respect to $\mathcal J$. Further, the measure  $d\mu(x):=g(x)^2 dx$ has a spectrum by proving an equivalent statement that
%
$$\sum_{p\in \mathcal T} |\widehat{\mu} (x-p)|^2 =1.$$
%
This implies that the measure $\mu$  is spectral, thus   $|g|^2$ must be a constant multiple of a characteristic function by
Corollary 1.4 of \cite{Dutkay-Lai}. This forces  $g= |K|^{-1/2}\chi_K$ a.e. on its support $K$, hence  the   conclusions of (ii) and (iii).  The converse holds by a direct calculation.
%The converse holds by Lemma 3.1 in \cite{LW03}.
\fi

The following problem links the study of window functions associated with Gabor orthonormal bases to the tiling and spectral properties of sets.
\medskip

\begin{problem}\label{our conjecture1}(Fuglede-Gabor Problem)
Let    $K\subset \Bbb  R^d$ be a measurable subset  with positive and finite measure, and let   $\Lambda \subset \Bbb R^{2d}$ be a countable subset. If the  Gabor family $\mathcal G(|K|^{-1/2}\chi_K, \Lambda)$ is an orthonormal basis for $L^2(\Bbb R^d)$, then $K$ tiles and is a spectral set.
 \end{problem}

Since the indicator function of a set is non-negative, {\it Problem \ref{our conjecture1} is already affirmative if the time-frequency set is a separable countable sets} using the results of Dutkay and the first listed author.
 Therefore we only focus on the case when the time-frequency set is non-separable. Moreover, although the problem does not require any boundedness assumption of $K$, our interest will mainly be focused on the set $K$ being bounded.

   It is hard to speculate whether Problem \ref{our conjecture1}  is true or not in its full generality. But from   the point of view of Fuglede's result for lattices, we still hope that   the Fuglede-Gabor problem is true for  non-separable lattices  as well. Unfortunately, after our intensive study, we found out that, similar to many notoriously difficult problems in Gabor analysis (see e.g. \cite{Groechenig-mystery}),   % (e.g. HRT conjecture and $abc$ problem),
    the Fuglede-Gabor problem for lattices appears to be uneasy. This paper     gives a partial answer towards the full solution together with some unexpected examples, as we explain below.  %Unfortunately, there is %  and in this paper we study
%  some important cases of $\Lambda$. Our results also solve  Liu and Wang\rq{}s Conjecture for a large class of non-separable lattices $\Lambda$  under the assumption that the window function is in the form of  $g(x)=|K|^{-1/2}\chi_K$.

\noindent{\bf Main Results of the paper.} Our main results will mostly be focused on the lower triangular block matrices since  most    matrices can be reduced to the lower triangular form:
   \begin{equation}\label{lower_block}
   \Lambda = \left(
                                      \begin{array}{cc}
                                        A & O \\
                                        C & B \\
                                      \end{array}
                                    \right)({\mathbb Z}^{2d}), \ \mbox{and} \ |\det(AB)|=1
                                    \end{equation}
    (i.e. dens$(\Lambda)=1$). We will use $B^{-t}$ to denote the inverse  transpose of  the matrix $B$.  Our first general key lemma is as follows, it will serve as a key step for our further analysis.

    \medskip

    \begin{lemma}[Key Lemma]\label{Th_union of FD} Let  $\Lambda = M({\mathbb Z}^{2d})$ with $M$ an  $2d\times 2d$ invertible  lower triangular block matrix of the form (\ref{lower_block}).
Suppose that ${\mathcal G}\left({|K|^{-1/2}}\chi_K,\Lambda\right)$ is a Gabor orthonormal basis. Then there exists an integer $N\ge 1$ such that
                                    $$
                                    K = \bigcup_{j=1}^N D_j=\bigcup_{j=1}^N E_j
                                    $$
where $D_j$\rq{}s are fundamental domains of $B^{-t}(\Bbb Z^d)$ and $E_j$\rq{}s are almost disjoint fundamental domains of $A(\Bbb Z^d)$ with $|D_i\cap D_j| = 0$ and $|E_i\cap E_j| =0$ for all $i\ne j$. (i.e. K  multi-tiles  ${\mathbb R}^d$ simultaneously by $A({\mathbb Z}^d)$ and $B^{-t}(\Bbb Z^d)$.)
\end{lemma}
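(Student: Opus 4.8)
The plan is to reduce the whole statement to two Fourier-vanishing conditions on $K$ and to read off both multi-tilings from them. Concretely, I would prove
\[
\widehat{\chi_K}(Bk)=0 \quad\text{and}\quad \widehat{\chi_K}(A^{-t}k)=0 \qquad\text{for all } k\in\mathbb Z^{d}\setminus\{0\}.
\]
Granting these, the classical periodization argument finishes the job: for a lattice $L$ the function $\sum_{\ell\in L}\chi_K(\cdot-\ell)$ is $L$-periodic with Fourier coefficients proportional to $\widehat{\chi_K}$ restricted to the dual lattice $L^{*}$, so it equals the constant $|K|/|\det L|$ as soon as $\widehat{\chi_K}$ vanishes on $L^{*}\setminus\{0\}$; being an a.e.\ constant value of a sum of indicator functions, it is a positive integer $N$. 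Taking $L=B^{-t}(\mathbb Z^d)$ (dual lattice $B(\mathbb Z^d)$) and $L=A(\mathbb Z^d)$ (dual lattice $A^{-t}(\mathbb Z^d)$) gives that $K$ multi-tiles $\mathbb R^d$ by both lattices, with the \emph{same} multiplicity $N=|K|/|\det A|$ since $|\det B^{-t}|=|\det A|$. The decomposition $K=\bigcup_{j=1}^N D_j=\bigcup_{j=1}^N E_j$ into almost disjoint fundamental domains is then routine measurable selection: over a fundamental domain of $L$ each fiber meets $K$ in exactly $N$ points a.e., and ordering those points measurably splits $K$ into $N$ sets each meeting every fiber once.

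For the first vanishing condition I would use only orthogonality inside a single time-fiber of $\Lambda$. Writing a lattice point as $(a,b)=(Am,\,Cm+Bn)$ with $(m,n)\in\mathbb Z^{2d}$, two elements with the same $m$ but $n\ne n'$ have equal time shift $a=Am$, so their inner product is $\int e^{2\pi i\langle B(n-n'),x\rangle}|g(x-Am)|^2\,dx$, which after translating by $Am$ is a nonzero multiple of $\int_K e^{2\pi i\langle B(n-n'),x\rangle}\,dx$. Orthonormality forces this to vanish, i.e.\ $\widehat{\chi_K}(Bk)=0$ for $k\ne 0$. Note that this step uses only $|g|^2=|K|^{-1}\chi_K$.

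The second vanishing condition is the crux, and the lower-triangular shear $C$ is precisely the obstruction: as $m$ varies with $n$ fixed the time shift sweeps $A(\mathbb Z^d)$, but the modulation $Cm+Bn$ moves with it, so $\Lambda$ has no ``frequency-fiber'' on which to repeat the previous computation. The clean way around this, which I would take, is the Ron--Shen/Janssen duality principle (see \cite{Groechenig-book}): since $\mathcal{G}(g,\Lambda)$ is an orthonormal basis it is in particular a Parseval frame, whence the Gabor system $\mathcal{G}(g,\Lambda^{\circ})$ over the adjoint lattice is orthonormal; crucially it carries the \emph{same} window $g$, so $|g|^2$ is still $|K|^{-1}\chi_K$. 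A direct computation with the symplectic form gives $\Lambda^{\circ}=JM^{-t}(\mathbb Z^{2d})$, with $J=\left(\begin{smallmatrix}0&I\\-I&0\end{smallmatrix}\right)$, whose points are $(a^{\circ},b^{\circ})=\left(B^{-t}q,\,-A^{-t}p+A^{-t}C^{t}B^{-t}q\right)$ for $(p,q)\in\mathbb Z^{2d}$. Fixing $q$ and varying $p$ now holds the time shift $a^{\circ}=B^{-t}q$ fixed while the modulation sweeps the full lattice $A^{-t}(\mathbb Z^d)$, so the one-line orthogonality computation above, applied to $\mathcal{G}(g,\Lambda^{\circ})$, yields $\widehat{\chi_K}(A^{-t}k)=0$ for all $k\ne 0$.

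The difficulty is therefore entirely concentrated in this second condition, and the care needed is in (i) invoking duality in exactly the form ``Parseval frame $\Rightarrow$ adjoint system orthonormal'', which sidesteps having to prove completeness of $\mathcal{G}(g,\Lambda^{\circ})$, and (ii) computing $\Lambda^{\circ}$ correctly for the triangular $M$ and checking it has the time-constant, modulation-sweeping direction used above. If one prefers to avoid citing duality, the self-contained alternative is to deduce from completeness that the $A(\mathbb Z^d)$-translates of $K$ cover $\mathbb R^d$ (otherwise a function supported off the union would be orthogonal to the whole system) and then upgrade the covering to constant multiplicity through a Zak-transform fiber analysis of the orthogonal decomposition $L^2(\mathbb R^d)=\bigoplus_m W_m$ into fiber-subspaces $W_m$; this is correct but markedly more technical, since the fibers are infinite-dimensional and the multiplicity $N$ emerges only as a density count — exactly the bookkeeping that the duality principle packages for free.
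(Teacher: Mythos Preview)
Your proposal is correct and follows essentially the same approach as the paper: orthogonality in a fixed time-fiber gives $\widehat{\chi_K}(Bk)=0$, Ron--Shen duality passes to the adjoint lattice $\Lambda^{\circ}$ (whose lower-triangular form you compute correctly) to obtain $\widehat{\chi_K}(A^{-t}k)=0$, and then the Jorgensen--Pedersen/Lagarias--Wang periodization argument yields the two multi-tilings with equal multiplicity from $|\det(AB)|=1$. The only cosmetic differences are that the paper cites the periodization step as a black box (their Theorem~\ref{thJoPeLgWa}) rather than sketching it, and states the duality in the stronger ONB$\Leftrightarrow$ONB form (their Theorem~\ref{thRS}) rather than your ``Parseval $\Rightarrow$ adjoint orthonormal'' version, which is already sufficient here.
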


 If we can prove that $N=1$, then $K$ will be a common fundamental domain for $A(\Bbb Z^d)$ and $B^{-t}(\Bbb Z^d)$ and this will imply that the Fuglede-Gabor problem  holds. In particular, this is true when $A^tB$  is an integer matrix and $K$ is a bounded set, as our next result confirms.

\medskip

\begin{theorem}\label{lower triangle}  Let $K$ be  a bounded measurable subset of $ \Bbb R^d$  with  positive measure, and let  $\Lambda\subset \Bbb R^{2d}$ be a lower triangular  lattice in (\ref{lower_block}). Suppose that
 ${\mathcal G}(|K|^{-1/2}\chi_K, \Lambda)$ is a Gabor orthonormal basis for $L^2(\Bbb R^d)$ and  $A^tB$  is an integer matrix.  Then $K$ tiles and is spectral. More precisely, $K$ is a common fundamental domain for  $A(\Bbb Z^d)$ and $B^{-t}(\Bbb Z^d)$, $K$ tiles by $A(\Bbb Z^d)$ and is spectral with spectrum $B(\Bbb Z^d)$.
\end{theorem}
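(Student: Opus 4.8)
The plan is to show that the multiplicity $N$ produced by the Key Lemma must equal $1$; everything else then follows from Fuglede's theorem for lattice tiles. First I would record the consequences of the integrality hypothesis. Writing $U := A^tB$, the density condition gives $|\det U| = |\det A|\,|\det B| = 1$, so $U \in GL_d(\Z)$ is unimodular. Hence $B(\Z^d) = A^{-t}U(\Z^d) = A^{-t}(\Z^d)$ is exactly the dual lattice $\Gamma^\ast$ of $\Gamma := A(\Z^d)$, and likewise $B^{-t}(\Z^d) = A U^{-t}(\Z^d) = A(\Z^d) = \Gamma$. Thus the two multi-tilings in Lemma \ref{Th_union of FD} refer to the \emph{same} lattice $\Gamma$, and the Key Lemma says precisely that $K$ multi-tiles $\R^d$ by $\Gamma$ with a constant integer multiplicity $N$; equivalently, for a.e. $x$ the set $K_x := \{\mu \in \Z^d : x + A\mu \in K\}$ has cardinality exactly $N$. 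The goal becomes $N = 1$.

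Next I would extract a combinatorial orthogonality relation from the off-diagonal Gabor equations. With $g := |K|^{-1/2}\chi_K$ and lattice points $(Am, Cm+Bn)$, orthonormality forces, for every $m \neq m'$ and all $n,n'$,
\begin{equation*}
0 = \langle \pi(Am,Cm+Bn)g,\ \pi(Am',Cm'+Bn')g\rangle = \frac{1}{|K|}\int_{(K+Am)\cap(K+Am')} e^{2\pi i\langle \xi, x\rangle}\,dx,
\end{equation*}
where $\xi = C(m-m') + B(n-n')$ ranges over the full coset $Cm_0 + B(\Z^d)$ with $m_0 := m-m'$. Because $B(\Z^d) = \Gamma^\ast$, the vanishing of all these integrals says that every Fourier coefficient of the $\Gamma$-periodization of $x \mapsto \chi_{(K+Am)\cap(K+Am')}(x)\,e^{2\pi i\langle Cm_0, x\rangle}$ is zero, so that periodization vanishes identically. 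Translating the free variable and cancelling a nonvanishing phase then yields
\begin{equation*}
\sum_{\mu \in K_x,\ \mu - m_0 \in K_x} e^{2\pi i\langle C m_0,\, A\mu\rangle} = 0 \qquad \text{for a.e. } x \text{ and every } m_0 \in \Z^d\setminus\{0\}.
\end{equation*}
This is the one step where integrality is essential: it identifies $B(\Z^d)$ with the dual lattice, so that the vanishing of $\widehat{\chi}$ along $B(\Z^d)$ upgrades to the vanishing of the entire periodization. For non-integer $A^tB$ this upgrade fails, consistent with the announced $N>1$ example.

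Finally I would run an extreme-point argument on the finite set $K_x$. Fix $x$ in the full-measure set on which the displayed identity holds for all $m_0 \in \Z^d$ simultaneously (a countable intersection of conull sets), and suppose $N = |K_x| \geq 2$. Choose $\xi_0 \in \R^d$ with $\mu \mapsto \langle \xi_0,\mu\rangle$ injective on $K_x$, and let $\mu^\ast,\mu_\ast$ be its unique maximizer and minimizer; set $m_0 := \mu^\ast - \mu_\ast \neq 0$. If $\mu \in K_x$ and $\mu - m_0 \in K_x$, then $\langle \xi_0, \mu\rangle = \langle \xi_0, \mu - m_0\rangle + \langle\xi_0,\mu^\ast - \mu_\ast\rangle \geq \langle \xi_0,\mu_\ast\rangle + \langle\xi_0,\mu^\ast-\mu_\ast\rangle = \langle\xi_0,\mu^\ast\rangle$, while $\mu\in K_x$ forces $\langle\xi_0,\mu\rangle\le\langle\xi_0,\mu^\ast\rangle$; hence $\mu = \mu^\ast$. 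So the sum collapses to the single unimodular term $e^{2\pi i\langle Cm_0, A\mu^\ast\rangle} \neq 0$, contradicting the identity. Therefore $N = 1$, so $K$ is a fundamental domain of $\Gamma = A(\Z^d) = B^{-t}(\Z^d)$ and tiles by $A(\Z^d)$. By Fuglede's theorem for lattices, $K$ is then spectral with spectrum the dual lattice $\Gamma^\ast = B(\Z^d)$, giving the assertion. I expect the main obstacle to be the careful justification of the periodization step—tracking the exact coset over which $\xi$ runs and confirming that integrality makes the conclusion a genuine pointwise-a.e. identity valid for every $m_0$; once that identity is secured, the extreme-point argument is elementary and the appeal to Fuglede is immediate.
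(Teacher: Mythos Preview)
Your proof is correct and takes a genuinely different route from the paper's. The paper first uses Lemma~\ref{lemma 3-prim} together with the unimodularity of $A^tB$ to reduce to the special lattice $\left(\begin{smallmatrix} I & O\\ C & I\end{smallmatrix}\right)(\Z^{2d})$; it then invokes a separate structural result (Proposition~\ref{prop2}) which, from a bounded $N$-multi-tile with $N>1$, manufactures a \emph{single} translate $m$ for which $K\cap(K+m)$ has positive measure and packs $\R^d$ by $\Z^d$, so that $\{e^{2\pi i\langle n,\cdot\rangle}:n\in\Z^d\}$ is complete (Lemma~\ref{lemma 4}) and hence exponentially complete in $L^2(K\cap(K+m))$, contradicting the orthogonality equation $\widehat{\chi_{K\cap(K+m)}}(Cm+n)=0$. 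You instead stay with the original $A$, read off $B(\Z^d)=\Gamma^\ast$ and $B^{-t}(\Z^d)=\Gamma=A(\Z^d)$ directly from unimodularity, and use the vanishing of $\widehat f$ along the full dual lattice $\Gamma^\ast$ to kill the $\Gamma$-periodization pointwise, obtaining an algebraic identity on the fibre $K_x$. Your extreme-point choice $m_0=\mu^\ast-\mu_\ast$ is the fibrewise analogue of the paper's lexicographic choice of $m$ in the proof of Proposition~\ref{prop2}, but carried out at a fixed $x$ rather than uniformly. A pleasant byproduct is that your argument never invokes boundedness of $K$: you only use $|K_x|=N<\infty$, which the Key Lemma already guarantees from $|K|<\infty$. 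By contrast, Proposition~\ref{prop2} genuinely requires boundedness (the paper notes this and gives the unbounded counterexample, Example~\ref{example3}); so your approach in fact establishes the theorem under the weaker hypothesis that $K$ has finite measure.
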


%Observe that the results of the theorem     holds particularly  if the block matrix  is a lower triangular symplectic matrix with $\det(M)=1$.
\medskip

% To prove  the theorem,    we show that   it is sufficient to show that if
%${\mathcal G}(|K|^{-1/2}\chi_K,  \Lambda)$ is a Gabor orthonormal basis for $L^2(\Bbb R^d)$ with  block matrices $A=B=I_d$,     the $d\times d$ identity matrix,
%then $K$ must tile $\Bbb R^d$  by ${\mathbb Z}^d$.  By the Fuglede Conjecture\rq{}s result for lattices (see the next section), $K$ is also    a spectral set with spectrum $({\mathbb Z^d})^\perp={\mathbb Z}^d$.
%Notice that we do not  assume any condition  on the matrix $C$.   In particular,  $C$ can be,  for example,  a matrix with irrational entries. Moreover, later in Theorem \ref{union of FD}  we show  the set $K$ is a union of fundamental domains of two different lattices independent of choice of the matrices $A$, $B$ or $C$. Yet $K$ might not be spectral  or tiling set (see \cite{KM06,Matolcsi,T04}).
%The tiling and spectral conclusion of the theorem only  depends on   $A^tB$. In other words, in this case  the lattice $\Lambda$ is   ``some sense" acting  like a separable lattice.

As a consequence of  Theorem \ref{lower triangle},  we resolve the  Fuglede-Gabor Problem \ref{our conjecture1}  in dimension one  for rational matrices  and $K$ is bounded.

\vskip.124in

\begin{theorem}\label{rational_dim1} Suppose that $K\subset \Bbb R$ is a bounded set with positive Lebesgue measure. Suppose that   $\Lambda$   is a rational lattice in $\Bbb R^{2}$ with $dens(\Lambda)=1$.  If
 ${\mathcal G}(|K|^{-1/2}\chi_K, \Lambda)$ is a Gabor orthonormal basis for $L^2(\Bbb R)$,  then $K$ tiles and is spectral.
\end{theorem}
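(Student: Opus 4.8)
The plan is to obtain Theorem \ref{rational_dim1} as a corollary of Theorem \ref{lower triangle}. The idea is that every rational full lattice $\Lambda\subset\mathbb{R}^2$ can be brought, by a change of $\mathbb{Z}$-basis, into the lower triangular normal form (\ref{lower_block}), and that once this is done the integrality hypothesis $A^tB\in\mathbb{Z}$ of Theorem \ref{lower triangle} is satisfied \emph{automatically} in dimension one. So the whole argument reduces to a normal-form step plus a one-line observation.

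First I would use that the Gabor system $\mathcal{G}(g,\Lambda)$ depends only on the set $\Lambda$ and not on any chosen generating matrix: if $\Lambda=M(\mathbb{Z}^2)$ and $U\in GL_2(\mathbb{Z})$, then $MU(\mathbb{Z}^2)=M(\mathbb{Z}^2)=\Lambda$, so the orthonormal-basis hypothesis is preserved when $M$ is replaced by $MU$. The task is therefore to choose $U$ so that $MU$ becomes lower triangular. Since $\Lambda$ is rational, $\Lambda\subset\mathbb{Q}^2$, and the intersection $\Lambda\cap(\{0\}\times\mathbb{R})$ of $\Lambda$ with the (rational) second coordinate axis is a rank-one sublattice, hence contains a nonzero vector. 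Let $v_2=(0,b)^t$, $b\ne 0$, be a generator of this rank-one lattice; a short argument shows $v_2$ is then primitive in $\Lambda$ (any proper divisor of $v_2$ in $\Lambda$ would again lie on the axis and contradict minimality), so $v_2$ extends to a $\mathbb{Z}$-basis $\{v_1,v_2\}$ of $\Lambda$. With these columns the generator $M=[\,v_1\mid v_2\,]$ has the shape $\left(\begin{smallmatrix}A&0\\C&B\end{smallmatrix}\right)$ with scalars $A=(v_1)_1\ne 0$, $B=b$, $C=(v_1)_2$, i.e.\ exactly (\ref{lower_block}). I would flag here that rationality is indispensable for this step: a generic irrational lattice (e.g.\ generated by $(1,0)$ and $(\sqrt2,1)$) meets the axis only at the origin and admits no such reduction.

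Finally, I would observe that the integrality condition costs nothing in this setting. From $\mathrm{dens}(\Lambda)=|\det M|^{-1}=1$ and the lower triangular shape we get $|AB|=|\det M|=1$, and since $d=1$ the $1\times 1$ block satisfies $A^tB=AB=\pm1\in\mathbb{Z}$. Thus all hypotheses of Theorem \ref{lower triangle} hold for the bounded set $K$, and that theorem yields directly that $K$ is a common fundamental domain for $A(\mathbb{Z})$ and $B^{-t}(\mathbb{Z})$; in particular $K$ tiles by $A(\mathbb{Z})$ and is spectral with spectrum $B(\mathbb{Z})$. The main (indeed only) obstacle is the normal-form step: one must argue carefully that a rational generating matrix can always be reduced to (\ref{lower_block}) by a unimodular integer column operation, and make explicit why this uses rationality. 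Everything after the reduction is immediate, because the scalar diagonal blocks force $A^tB=\pm1$ and Theorem \ref{lower triangle} then applies verbatim.
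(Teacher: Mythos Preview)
Your proof is correct and follows essentially the same route as the paper: reduce the rational lattice to lower triangular form (\ref{lower_block}), observe that in dimension one the scalar identity $|AB|=1$ forces $A^tB=AB=\pm1\in\mathbb{Z}$, and then invoke Theorem \ref{lower triangle}. The only difference is in how the normal-form step is carried out: the paper clears denominators to get an integer matrix and cites the Han--Wang reduction (Lemma \ref{HanW1}, Corollary \ref{M=N}), whereas you give a self-contained geometric argument by producing a primitive lattice vector on the axis $\{0\}\times\mathbb{R}$ (using that a finitely generated subgroup of $\mathbb{Q}$ is cyclic, so the projection onto the first coordinate has rank-one kernel). Both implementations are standard; yours has the advantage of being self-contained and of making explicit why rationality is needed, while the paper's version packages the step into a citable lemma that also covers the higher-dimensional reduction used elsewhere.
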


\medskip

We  also have the following result for upper triangular block matrices using Theorem \ref{lower triangle}.

%As we  will observe later,  the Fuglede-Gabor Problem  is a subtle question for rational lattices in dimensions $d\geq 2$.   However, we have the following general result for upper triangular rational lattices.
                                    \vskip.124in

\begin{theorem}\label{UT} Suppose that $K\subset \Bbb R^d$ is a bounded set with positive Lebesgue measure. Supposes that  $\Lambda\subset \Bbb R^{2d}$ is a lattice such that
 $\Lambda = \left(
                                      \begin{array}{cc}
                                        A & D \\
                                        O & B \\
                                      \end{array}
                                    \right)({\mathbb Z}^{2d})$ with  $dens(\Lambda)=1$,
                                     $A^{-1}D$  symmetric rational matrix and $A^tB=I$. If
 ${\mathcal G}(|K|^{-1/2}\chi_K, \Lambda)$ is a Gabor orthonormal basis for $L^2(\Bbb R^d)$, then $K$ tiles and is spectral.
 \end{theorem}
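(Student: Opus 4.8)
The plan is to reduce Theorem~\ref{UT} to the lower triangular Theorem~\ref{lower triangle} \emph{without transforming the window at all}, by re-expressing the same lattice $\Lambda$ in lower triangular block form. The point is that right multiplication of a generating matrix by a $U\in GL_{2d}(\Z)$ leaves the lattice unchanged as a set: if $M=\begin{pmatrix} A & D\\ O & B\end{pmatrix}$ and $M'=MU$, then $M'(\Z^{2d})=M(U\Z^{2d})=M(\Z^{2d})=\Lambda$, so ${\mathcal G}(|K|^{-1/2}\chi_K,\Lambda)$ is literally the same Gabor system whether we describe $\Lambda$ through $M$ or through $M'$, and the set $K$ never changes. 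Thus it suffices to produce $U\in GL_{2d}(\Z)$ for which $M'=MU$ is block lower triangular with diagonal blocks satisfying the integrality hypothesis of Theorem~\ref{lower triangle}. Throughout I write $S:=A^{-1}D$, so that the hypotheses read $S=S^{t}$, $S$ rational, $D=AS$, and $B=A^{-t}$ (hence $B^{-t}=A$).

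First I would construct $U$. Writing $U=\begin{pmatrix}U_1 & U_2\\ U_3 & U_4\end{pmatrix}$, the top right block of $MU$ is $AU_2+DU_4=A(U_2+SU_4)$, so I need $U_2=-SU_4$, and in particular $SU_4\in M_d(\Z)$. Let $L:=\{x\in\Z^{d}: Sx\in\Z^{d}\}$; since $S$ is rational, $L$ contains $q\Z^{d}$ for a common denominator $q$, hence is a finite index subgroup of $\Z^{d}$. Choose $U_4$ whose columns form a $\Z$-basis of $L$ and set $U_2:=-SU_4\in M_d(\Z)$. The integer matrix $\begin{pmatrix}U_2\\ U_4\end{pmatrix}$ is primitive: if $n(y,z)=(-Sx,x)$ with $x\in L$, $n\ge 1$ and $(y,z)\in\Z^{2d}$, then $y=-Sz$ is integral, so $Sz\in\Z^{d}$, whence $z\in L$ and $(y,z)=(-Sz,z)$ already lies in the column lattice. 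A primitive $2d\times d$ integer matrix extends to an element of $GL_{2d}(\Z)$, so I may choose integer blocks $U_1,U_3$ completing $U$. Then $M'=MU=\begin{pmatrix} A' & O\\ C' & B'\end{pmatrix}$ is block lower triangular with $A'=A(U_1+SU_3)$, $C'=BU_3$, and $B'=BU_4=A^{-t}U_4$; invertibility of $M'$ forces $A',B'$ invertible, and $|\det(A'B')|=|\det M'|=|\det M|=1$, so $M'$ has exactly the form~\eqref{lower_block}.

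Next I would verify the integrality hypothesis $A'^{t}B'\in M_d(\Z)$ of Theorem~\ref{lower triangle}. Using $B'=A^{-t}U_4$ and $A'=A(U_1+SU_3)$ one computes $A'^{t}B'=(U_1+SU_3)^{t}A^{t}A^{-t}U_4=U_1^{t}U_4+U_3^{t}S^{t}U_4$. Here the symmetry of $S$ enters decisively: $S^{t}U_4=SU_4=-U_2\in M_d(\Z)$, so $A'^{t}B'=U_1^{t}U_4-U_3^{t}U_2$ is an integer matrix. Since $K$ is bounded and ${\mathcal G}(|K|^{-1/2}\chi_K,M'(\Z^{2d}))={\mathcal G}(|K|^{-1/2}\chi_K,\Lambda)$ is a Gabor orthonormal basis for a lower triangular lattice of the form~\eqref{lower_block} with $A'^{t}B'$ integral, Theorem~\ref{lower triangle} applies verbatim: $K$ is a common fundamental domain for $A'(\Z^{d})$ and $B'^{-t}(\Z^{d})$, tiles by $A'(\Z^{d})$, and is spectral with spectrum $B'(\Z^{d})$. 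This is precisely the assertion that $K$ tiles and is spectral.

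The delicate point is the construction of $U$ in the second step: I must clear the top right block over the integers (which forces $SU_4$ integral and is where rationality of $S$ is used, via the finite index subgroup $L$) and \emph{simultaneously} keep $U$ unimodular, which is why I pass to a genuine $\Z$-basis of $L$ rather than a naive common-denominator choice such as $U_4=qI$, the latter failing primitivity when $d>1$. The symmetry of $S$ is not needed for this construction, but it is exactly what guarantees in the third step that $A'^{t}B'\in M_d(\Z)$; without it one would only reach the multi-tiling conclusion of the Key Lemma~\ref{Th_union of FD} rather than a genuine tiling. I would therefore expect the writeup to spend most of its effort justifying primitivity and the extension to $GL_{2d}(\Z)$, with everything else being a direct appeal to Theorem~\ref{lower triangle}.
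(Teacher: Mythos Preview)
Your proposal is correct and follows essentially the same strategy as the paper: rewrite the upper triangular generator as a lower triangular one via right multiplication by a unimodular integer matrix, then invoke Theorem~\ref{lower triangle}. The paper first applies Lemma~\ref{lemma 3-prim} to normalize $A=I$ (transforming $K$ to $A^{-1}K$) and then, in Lemma~\ref{a technical lemma}, builds the unimodular matrix explicitly by choosing the complementary blocks $Z,X$ through the complete-residue-class Lemma~\ref{complete residue classes}; the payoff is that the resulting diagonal blocks are identified as $E^{-t}$ and $E$, so $A'^{t}B'=I$ is immediate. You instead work directly with the original $A,D,B$, construct the last $d$ columns of $U$ from a $\Z$-basis of $L=\{x\in\Z^d:Sx\in\Z^d\}$, and appeal to the abstract fact that a primitive $2d\times d$ integer matrix extends to an element of $GL_{2d}(\Z)$; the integrality $A'^{t}B'=U_1^{t}U_4-U_3^{t}U_2\in M_d(\Z)$ then drops out of the symmetry $S^t=S$. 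Both routes use rationality of $S$ to get $L$ full rank and symmetry of $S$ to secure the integrality hypothesis of Theorem~\ref{lower triangle}; yours is slightly more streamlined (no preliminary dilation, $K$ never changes), while the paper's is more constructive.
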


 \medskip

Theorem \ref{lower triangle} may also be consider as a converse of \cite[Lemma 4.1]{HanW4}, which states that  if   $K$ is a  common fundamental domain for the lattice $A({\mathbb Z}^d)$ and $B^{-t}({\mathbb Z}^d)$, then for any  matrix $C$, the system  ${\mathcal G}(|K|^{-1/2}\chi_K,\Lambda)$ is an Gabor orthonormal basis. Therefore one may  naturally expect that $N=1$  in Theorem \ref{Th_union of FD} is always the case. However, we will show that  {\it $N>1$ can actually happen with a suitable choice of $C$ if $A^tB$ is a rational matrix (see Example \ref{mutli-tile K})}. This poses additional difficulty to solve the Fuglede-Gabor Problem for rational matrices in higher dimension, as we shall discuss it  later.  Finally, for a general matrix containing irrational entries, the answer to  Fuglede-Gabor problem is completely open.  We will discuss  this in detail  in Section \ref{Open problems}.

%In Theorem \ref{UT} we prove the  existence of a $d\times d$  integer matrix $E$ such that $K$  tiles  by lattice  $AE^{-t}(\Bbb Z^d)$  when $A^{-1}D$ a rational matrix and tiles by $A(\Bbb Z^d)$ when $A^{-1}D$ is an integer matrix.  % An easy calculation shows that under the assumption that  $A^{-1}D$  is an integer matrix and $K$ tiles by $A(\Bbb Z^d)$, then the converse of the theorem also holds.
 %
%The proof we provide  for  Theorem \ref{UT} is constructive.
%For this,
% once again, by the hypothesis of the theorem and appealing to Lemma \ref{lemma 3-prim} we can assume that $A=B=I_d$ and $D$ is rational and symmetric. Then by    reducing  the   upper triangle case into a lower triangle case through a special transformation,   the conclusion of the theorem follows by  Theorem \ref{lower triangle}.  In this concern,  Lemma \ref{complete residue classes} has a key role in the proof.
%

 \medskip

 {\bf Outline of the paper.} We organize the paper as follows: After some definitions and recalling some known and basic facts about  lattices and Gabor analysis in Section \ref{notations},  in Section \ref{proof of Theorem 1.3}  we prove Theorem \ref{Th_union of FD}.
 The proof of Theorems \ref{lower triangle} are \ref{rational_dim1} are presented in
   Section \ref{thm:lower triangle}.  In  Section \ref{thm:rational_dim1 and UT} we prove Theorem \ref{UT}.   Section \ref{Examples} is devoted to   examples illustrating the possibility for $N> 1$.
   We conclude the paper with a series of open problems in Section  \ref{Open problems} both  for rational and irrational lattices as well as the full generality of the Fuglede-Gabor Problem.  In our exposition, we discover that a new notion of completeness, which we will call  {\it exponential completeness},  is crucial in studying the Fuglede-Gabor problem, we will give a short study in Appendix A. In Appendix B, we will show that the octagon will not produce any Gabor orthonormal basis using rational matrices.

 \section{Preliminaries}\label{notations}

In this section, we will collect several basic definitions and results required for the rest of the paper.  A {\it full-rank lattice}
 $\Lambda\subset \Bbb R^d$ is a discrete and countable subgroup of $\Bbb R^{d}$ with compact quotient group  $\Bbb R^{d}/\Lambda$. A full-rank lattice  in $\Bbb R^d$ is  given by $\Lambda = {M}({\mathbb Z}^{2d})$ for some
  $2d\times 2d$ invertible matrix $M\in GL(2d, \Bbb R)$. The density of $\Lambda$ is given by   dens$(\Lambda)=|\det(M)|^{-1}$.

 Let $\Lambda$ be a lattice in $\Bbb R^d$.
   The {\it dual lattice} of $\Lambda$ is  defined as
 $$\Lambda^\perp:= \{ x\in \Bbb R^{2d} : \ \langle \lambda, x\rangle\in \Bbb Z,  \ \forall \lambda\in \Lambda\} .$$
A direct calculation shows that  $\Lambda^\perp=  M^{-t}(\Bbb Z^{d})$.

A fundamental domain of a lattice $\Lambda$ is a measurable set $\Omega$ in $\Bbb R^d$ which contains distinct representatives (mod $\Lambda$) in $\Bbb R^d$, so that the any intersection of $\
\Omega$ with any coset $x+\Lambda$ has only one element. For the existence of a fundamental domain see   \cite[Theorem 1]{Feld-Green68}.  It is also evident that $\Omega$ tiles $\Bbb R^d$ with translations by $\Lambda$  and
any other tiling set differs from  $\Omega$ at most for a zero measure set.

\noindent{\bf 1. A reduction lemma.} For an invertible $d\times d$  matrix
 $A$, the operator    ${\mathcal D}_A:L^2(\Bbb R^d)\to L^2(\Bbb R^d)$ defined by
 $${\mathcal D}_Ag(x):=|\det(A)| ^{1/2} g(Ax).$$
 is unitary, i.e, ${\mathcal D}_A$ is onto and isometry $\|{\mathcal D}_Ag\|= \|g\|$. The following lemma follows from a simple computation and is in general known. We will omit the detail of the proof. 

\vskip.124in

 \begin{lemma}\label{lemma 3-prim}
 Let ${\Lambda} $ be a lattice such that
\begin{align}\label{block lattice}
\Lambda= \left(
                    \begin{array}{cc}
                       A & D\\
                       C & B  \\
                    \end{array}
                  \right)({\mathbb Z}^{2d})
                   \end{align}
                   where  $A$ is an invertible $d\times d$  matrix. Then
$$\mathcal G(g, \Lambda) = {\mathcal D}_{A^{-1}}\mathcal G({\mathcal D}_Ag, \widetilde{\Lambda})$$
 where  $$
\widetilde{\Lambda} = \left(
                    \begin{array}{cc}
                       I & A^{-1}D\\
                       A^tC & A^tB  \\
                    \end{array}
                  \right) ({\mathbb Z}^{2d}).
 $$

Consequently,  $\mathcal G(g, \Lambda) $ is a Gabor orthonormal basis if and only if $\mathcal G({\mathcal D}_Ag, \widetilde{\Lambda})$ is a Gabor orthonormal basis.
   \end{lemma}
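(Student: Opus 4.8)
The plan is to reduce the stated identity of Gabor systems to a single operator intertwining relation between the dilation $\mathcal{D}_A$ and the time-frequency shift $\pi(a,b)$, and then to match the two lattices point-by-point. First I would record how $\mathcal{D}_A$ conjugates the two building blocks of $\pi(a,b)=M_bT_a$. A direct change of variables gives
\[
\mathcal{D}_A T_a = T_{A^{-1}a}\,\mathcal{D}_A, \qquad \mathcal{D}_A M_b = M_{A^t b}\,\mathcal{D}_A,
\]
the first because $g(Ax-a)=g(A(x-A^{-1}a))$ and the second because $\langle b, Ax\rangle = \langle A^t b, x\rangle$. Composing these in the order dictated by $\pi(a,b)=M_bT_a$ yields
\[
\mathcal{D}_A\,\pi(a,b) = \pi(A^{-1}a,\,A^t b)\,\mathcal{D}_A,
\]
and since $\mathcal{D}_A^{-1}=\mathcal{D}_{A^{-1}}$ this is equivalent to $\pi(a,b)g = \mathcal{D}_{A^{-1}}\,\pi(A^{-1}a, A^t b)\,\mathcal{D}_A g$ for every $g\in L^2(\mathbb{R}^d)$.

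Next I would identify the reparametrization of the lattice induced by the map $(a,b)\mapsto (A^{-1}a, A^t b)$, that is, by left multiplication by the block matrix $\mathrm{diag}(A^{-1}, A^t)$. Writing a generic point of $\Lambda$ as $(a,b)=(Am+Dn,\,Cm+Bn)$ with $(m,n)\in\mathbb{Z}^d\times\mathbb{Z}^d$, one computes
\[
(A^{-1}a,\, A^t b) = (m + A^{-1}Dn,\ A^t Cm + A^t Bn),
\]
which is precisely the point of $\widetilde{\Lambda}$ indexed by the same $(m,n)$. Hence $\mathrm{diag}(A^{-1},A^t)$ carries $\Lambda$ bijectively onto $\widetilde{\Lambda}$, and combining this bijection with the intertwining relation above gives the element-wise equality $\pi(a,b)g = \mathcal{D}_{A^{-1}}\,\pi(\tilde a, \tilde b)\,\mathcal{D}_A g$ with $(\tilde a,\tilde b)\in\widetilde{\Lambda}$. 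Taking the union over all lattice points yields the asserted set equality $\mathcal{G}(g,\Lambda) = \mathcal{D}_{A^{-1}}\mathcal{G}(\mathcal{D}_A g, \widetilde{\Lambda})$.

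Finally, the ``consequently'' clause is immediate from the unitarity of $\mathcal{D}_{A^{-1}}$ (equivalently of $\mathcal{D}_A$): a unitary operator maps an orthonormal basis to an orthonormal basis and reflects the property back, so $\mathcal{G}(g,\Lambda)$ is a Gabor orthonormal basis if and only if $\mathcal{G}(\mathcal{D}_A g, \widetilde{\Lambda})$ is. I expect no genuine obstacle here; the only point requiring care is the bookkeeping in the frequency slot, where the modulation picks up $A^t$ rather than $A^{-t}$ because the dilation acts inside the character via $\langle b, Ax\rangle = \langle A^t b, x\rangle$. Getting this adjoint correct is exactly what makes the bottom row of $\widetilde{\Lambda}$ come out as $(A^tC,\,A^tB)$ rather than its inverse-transpose analogue.
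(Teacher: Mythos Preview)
Your proof is correct and complete; the operator intertwining $\mathcal{D}_A\,\pi(a,b)=\pi(A^{-1}a,A^tb)\,\mathcal{D}_A$ together with the bijection $\mathrm{diag}(A^{-1},A^t):\Lambda\to\widetilde{\Lambda}$ is exactly the ``simple computation'' the paper alludes to. The paper itself omits the proof entirely, so there is nothing further to compare.
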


%   \begin{proof} The proof follows from a direct calculation. Indeed,
%   for any  $(m,n)\in \Bbb Z^d\times \Bbb Z^d$ and  $\lambda= \left(
%                    \begin{array}{cc}
%                       A & D\\
%                       C & B  \\
%                    \end{array}
%                  \right) (m,n)^t \in \Lambda$, we have
% %
%   \begin{align}\notag
%   \pi(\lambda)g(x)&=
%   \pi(Am+Dn, Cm+Bn) g(x)\\\notag
%   &= g(x-Am-Dn) e^{2\pi i \langle Cm+Bn,x\rangle}  \\\notag
%   &= |\det A|^{-1/2} {\mathcal D}_Ag(A^{-1} x-m-A^{-1}Dn) e^{2\pi i \langle A^tCm+A^tBn,A^{-1}x\rangle} \\\notag
%   &= {\mathcal D}_{A^{-1}} \pi(m+A^{-1}Dn, A^tCm+A^tBn){\mathcal D}_Ag(x) .
%   \end{align}
%   The conclusion of the lemma now follows immediately from the preceding equalities  and the fact that  orthonormal bases are preserved under  the unitary maps.
%   \end{proof}

In Lemma \ref{lemma 3-prim},  if we let
   $D=O$ and $g(x)=|K|^{-1/2} \chi_K$, then the conclusion of the lemma shows that   ${\mathcal G}(|K|^{-1/2} \chi_K, \Lambda)$ is an orthonormal basis  for $L^2(\Bbb R^d)$ if and only if  ${\mathcal G}(|A^{-1}K|^{-1/2} \chi_{A^{-1}K}, \widetilde{\Lambda})$ is an orthonormal basis with
$$
\widetilde{\Lambda} = \left(
                    \begin{array}{cc}
                       I & O\\
                       A^tC & A^tB  \\
                    \end{array}
                  \right) ({\mathbb Z}^{2d}).
$$
 We shall use this observation   later.

 \iffalse
\begin{proof}
Let $g = |K|^{-1/2} \chi_K$. For each $\lambda = (Am,Cm+Bn),\lambda' =(Am',Cm'+Bn')\in \Lambda$. Note that by a change of variable $x \rightarrow Ax$
$$
\begin{aligned}
\langle\pi(\lambda)g,\pi(\lambda')g\rangle =&  |K|^{-1} \int \chi_K (x-Am)\chi_K (x-Am') e^{-2\pi i \langle C(m-m')+B(n-n'),x\rangle}dx\\
=& |K|^{-1}\int \chi_K (x-Am)\chi_K (x-Am') e^{-2\pi i \langle C(m-m')+B(n-n'),x\rangle}dx\\
=&  |\det A||K|^{-1}\int \chi_K (Ax-Am)\chi_K (Ax-Am') e^{-2\pi i \langle C(m-m')+B(n-n'),Ax\rangle}dx\\
=& |A^{-1}K|^{-1} \int \chi_{A^{-1}K} (x-m)\chi_{A^{-1}K} (x-m')e^{-2\pi i \langle A^tC(m-m')+A^tB(n-n'),x\rangle}dx.
\end{aligned}
$$
This shows that ${\mathcal G}(|K|^{-1/2} \chi_K, \Lambda)$ is mutually orthogonal if and only if ${\mathcal G}(|A^{-1}K|^{-1/2} \chi_{A^{-1}K}, \widetilde{\Lambda})$ is mutually orthogonal. In a similar change of variable, we see that the Parseval identities of these Gabor system are actually equivalent. Hence, the statement of the lemma follows.
\end{proof}
\fi

\medskip

\noindent{\bf 2. Orthogonality implies completeness.} The following proposition says that completeness automatically holds for a lattice of density one if we can establish the mutually orthogonality.

\medskip

                   \begin{proposition}\label{complt}
                   Let $g\in L^2(\Bbb R^d)$, $\|g\|=1$ and $\Lambda\subset \Bbb R^{2d}$ be a lattice with density $\mbox{dens}(\Lambda)=1$. Assume that
                   $\mathcal G(g,\Lambda)$ is  an orthonormal set. Then $\mathcal G(g,\Lambda)$ is complete.
                  \end{proposition}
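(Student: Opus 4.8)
The plan is to recast completeness as the identity $S = I$ for the Gabor frame operator, and then to exploit the special structure that, for an \emph{orthonormal} system, this operator is an orthogonal projection lying in the commutant of the lattice. Write $V := \overline{\mathrm{span}}\,\mathcal{G}(g,\Lambda)$ and let $S$ be the frame operator $Sf = \sum_{\lambda\in\Lambda}\langle f,\pi(\lambda)g\rangle\,\pi(\lambda)g$. Orthonormality gives Bessel's inequality with bound one, so $S$ is bounded with $0\le S\le I$; and for an orthonormal family one checks immediately that $S = P_V$, the orthogonal projection onto $V$. Thus $\mathcal{G}(g,\Lambda)$ is complete precisely when $S = I$. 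The first step I would record is that $V$ is invariant under every time-frequency shift $\pi(\mu)$, $\mu\in\Lambda$: since $\pi(\mu)\pi(\lambda)g$ is a unimodular multiple of $\pi(\mu+\lambda)g$ and $\mu+\lambda$ runs over $\Lambda$ as $\lambda$ does, $\pi(\mu)$ permutes $\mathcal{G}(g,\Lambda)$ up to phases. Hence $S=P_V$ commutes with $\pi(\mu)$ for all $\mu\in\Lambda$.

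The heart of the argument is then a dimension (normalized trace) count in the commutant of the lattice representation $\{\pi(\mu):\mu\in\Lambda\}$. On this algebra there is a canonical faithful trace $\tau$, normalized so that $\tau(I)=1$, and the fundamental identity of Gabor analysis (equivalently, Rieffel's coupling computation) gives
$$\tau(S)=\mathrm{dens}(\Lambda)\,\|g\|^2 .$$
I would pin down the normalization on examples: for $g=\chi_{[0,1]}$, $\Lambda=\mathbb{Z}\times\mathbb{Z}$ one has $S=I$ and both sides equal $1$, while for $g=\sqrt{2}\,\chi_{[0,1/2]}$, $\Lambda=\mathbb{Z}\times 2\mathbb{Z}$ the system is orthonormal but incomplete, $S$ is multiplication by $\chi_{\cup_n[n,n+1/2]}$, and the fraction of $L^2$ it spans is $\tfrac12=\mathrm{dens}(\Lambda)\,\|g\|^2$. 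Under the hypotheses $\mathrm{dens}(\Lambda)=1$ and $\|g\|=1$ the displayed identity reads $\tau(S)=1=\tau(I)$, so $\tau(I-S)=0$. Since $I-S\ge 0$ and $\tau$ is faithful, $I-S=0$; that is, $S=I$ and $V=L^2(\mathbb{R}^d)$, which is completeness.

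The main obstacle I anticipate is establishing the trace identity for an \emph{arbitrary} $g\in L^2$, with no smoothness or decay assumptions. My approach would be to prove it first for windows in a nice dense class — Schwartz functions, or Feichtinger's algebra $S_0$ — where the Janssen representation expresses $S$ as an absolutely convergent series over the adjoint lattice $\Lambda^\circ$ and the computation of $\tau(S)$ is explicit, and then to pass to general $g\in L^2$ by a continuity/density argument, using that for Bessel systems $S$ and the quantities defining $\tau$ depend continuously on the window.

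As an alternative that sidesteps explicit traces, I would run the Ron--Shen / Wexler--Raz duality principle. An orthonormal system is in particular a Riesz sequence, so by duality $\mathcal{G}(g,\Lambda^\circ)$ is a \emph{frame} for $L^2(\mathbb{R}^d)$ over the adjoint lattice $\Lambda^\circ$, which again has density one; the density theorem then forces this frame to be a Riesz basis, hence a Riesz sequence, and a second application of duality returns that $\mathcal{G}(g,\Lambda)$ is itself a frame. Being simultaneously orthonormal and a frame, it is an orthonormal basis, and in particular complete. Either route reduces the Proposition to the same non-elementary core — duality, the density theorem, and the coupling/trace identity at critical density — and that is where I expect the genuine work to lie; everything else is bookkeeping with the commutation relations.
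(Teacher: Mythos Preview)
Your two approaches are both correct in outline, but they take a genuinely different and considerably heavier route than the paper.

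The paper avoids von Neumann algebra traces and duality altogether and stays on the level of the short-time Fourier transform plus an elementary tiling lemma. From Bessel's inequality and the covariance $V_g(\pi(s,\nu)f)(t,\xi)=c\,V_gf(t-s,\xi-\nu)$ one gets $|V_gf|^2\ast\delta_\Lambda\le\|f\|^2$ pointwise; Moyal's identity gives $\int|V_gf|^2=\|f\|^2\|g\|^2=\|f\|^2$, so $G:=\|f\|^{-2}|V_gf|^2$ is a nonnegative $L^1$ function of mass one with $G\ast\delta_\Lambda\le1$. Since $\mathrm{dens}(\Lambda)=1$, any fundamental domain $\Omega$ has $|\Omega|=1$ and $\chi_\Omega\ast\delta_\Lambda=1$. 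The paper then quotes a one-line lemma from \cite{GLW}: if $f,g\ge0$ have integral one and $f\ast\mu\le1$, $g\ast\mu\le1$, then $f\ast\mu=1\iff g\ast\mu=1$. Applying it with $G$ and $\chi_\Omega$ forces $G\ast\delta_\Lambda=1$, which is exactly Parseval's identity for every $f$, hence completeness.

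What your approaches buy is conceptual placement of the result (Rieffel's coupling constant, the commutant, Ron--Shen duality); what the paper's approach buys is a short, self-contained argument valid directly for arbitrary $g\in L^2$, with no approximation by nice windows, no Janssen representation, and no trace identity to justify. Note also that your ``alternative'' route does not really sidestep the deep input: the step ``a Gabor frame over a lattice at density one must be a Riesz basis'' is itself essentially the coupling/trace theorem (duality alone only swaps ``frame'' and ``Riesz sequence'' and leaves you with the same missing completeness on the dual side), so both of your routes rest on the same non-elementary core, as you yourself anticipate. The paper's proof shows that this core can be replaced by something much lighter.
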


\medskip
                                     For the  proof of Proposition \ref{complt} we require  the following lemma.
                                  Note that for a positive Borel measure $\mu$,
                                  $$
                                  f\ast\mu (x) = \int f(x-y)d\mu(y),
                                  $$
                                  given that the integral is well-defined. If
 $\mu = \sum_{\lambda\in\Lambda}\delta_{\lambda}$, then $\chi_K\ast\mu = 1$ ($\le 1$) if and only if $K$ tiles (packs) ${\mathbb R}^d$ by $\Lambda$\footnote{Recall that $K$ packs ${\mathbb R}^d$ by ${\mathcal J}$ if $\sum_{t\in{\mathcal J}}\chi_{K}(x-t)\le1$, a.e. $x\in \Bbb R^d$}. With this introduction we recall the following  result.

\medskip

                   \begin{lemma}\label{GLW}(\cite[Theorem 2.1]{GLW})
                 Suppose that $f,g\in L^1({\mathbb R}^d)$ are non-negative functions such that $\int f(x) dx = \int g(x) dx= 1$. Suppose that for positive Borel measure $\mu$  on $\Bbb R^d$
$$
f\ast\mu \le 1  \ \mbox{and} \ g\ast\mu\le 1.
$$
Then $f\ast\mu = 1$  if and only if $ g\ast \mu= 1$.
\end{lemma}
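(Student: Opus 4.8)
The plan is to prove one implication and invoke the symmetry in $f$ and $g$ for the other. So I assume $f\ast\mu=1$ a.e.\ and aim to show $g\ast\mu=1$ a.e., using crucially the packing hypothesis $g\ast\mu\le 1$. The point to keep in mind is that $f\ast\mu=1$ alone does \emph{not} force $g\ast\mu=1$ (rescaling $g$ to a narrow bump gives a counterexample), so the bound $g\ast\mu\le 1$ must be fed in at the decisive step.

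First I would upgrade the tiling identity for $f$ to a tiling identity for the bounded function $G:=g\ast\mu$. Since all the objects $f,g,\mu$ are non-negative, Tonelli's theorem justifies freely interchanging orders of integration, so the mixed convolution of the $L^1$ functions with the positive measure $\mu$ is associative and commutative: for a.e.\ $z$,
$$
\bigl(f\ast G\bigr)(z)=\bigl(f\ast(g\ast\mu)\bigr)(z)=\bigl((f\ast g)\ast\mu\bigr)(z)=\bigl(g\ast(f\ast\mu)\bigr)(z).
$$
Because $f\ast\mu=1$ a.e.\ and $\int g\,dx=1$, the right-hand side equals $\int g(z-x)\,dx=1$. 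Thus $f\ast G=1$ a.e.

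The heart of the argument is then a pointwise equality-in-an-inequality step. For a.e.\ $z$,
$$
1=(f\ast G)(z)=\int f(z-x)\,G(x)\,dx\le \int f(z-x)\,dx=1,
$$
where I used $G\le 1$ together with $f\ge0$ and $\int f\,dx=1$. Hence the non-negative integrand $f(z-x)\,(1-G(x))$ integrates to zero in $x$, forcing $f(z-x)\,(1-G(x))=0$ for a.e.\ $x$, for a.e.\ $z$. Reading this double-null statement in the other order via Fubini, for a.e.\ fixed $x$ the function $z\mapsto f(z-x)(1-G(x))$ vanishes for a.e.\ $z$; integrating in $z$ and using $\int f\,dz=1$ gives $1-G(x)=0$. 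Therefore $G=g\ast\mu=1$ a.e., which is the desired conclusion, and exchanging the roles of $f$ and $g$ yields the reverse implication.

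I expect the only genuinely delicate point to be the bookkeeping that legitimizes these manipulations rather than any deep idea: one must confirm that $f\ast\mu$ and $g\ast\mu$ are well-defined measurable functions (guaranteed by the packing bounds $\le1$), that associativity and commutativity of the function–function–measure convolution hold (secured by Tonelli because $f,g,\mu\ge0$), and that the passage from ``for a.e.\ $z$, a.e.\ $x$'' to ``for a.e.\ $x$, a.e.\ $z$'' is valid (again Fubini--Tonelli on $\mathbb{R}^{2d}$). Notably the proof requires no Fourier analysis and no structural assumption on $\mu$; the hypothesis $g\ast\mu\le1$ is exactly what powers the equality case.
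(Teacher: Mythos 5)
Your proof is correct and is essentially the argument of the cited source: the paper itself states this lemma as a quotation of \cite[Theorem 2.1]{GLW} without reproducing a proof, and the proof there proceeds exactly as yours does, using Tonelli to get $1=g\ast(f\ast\mu)=(f\ast g)\ast\mu=f\ast(g\ast\mu)$ and then exploiting the equality case in $\int f(z-x)\bigl(1-(g\ast\mu)(x)\bigr)\,dx=0$ together with $\int f=1$ to force $g\ast\mu=1$ a.e. Your bookkeeping (measurability of $g\ast\mu$, the Fubini swap of the ``a.e.\ $z$, a.e.\ $x$'' quantifiers, and the observation that the hypothesis $g\ast\mu\le 1$ is what powers the equality case) is sound, so there is nothing to correct.
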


 Given $f,g\in L^2({\mathbb R}^d)$, the {\it short time Fourier transform} is defined by
        \begin{align}\label{STFT}
V_gf(t,\xi)=\int f(x)\overline{g(x-t)}e^{-2\pi i \langle\xi,x\rangle}dx, \quad (t,\xi)\in \Bbb R^{2d}
 \end{align}
and it is  a continuous function on ${\mathbb R}^{2d}$ \cite{Groechenig-book}.

                  \begin{proof}[Proof of Proposition \ref{complt}]
                  The mutual orthogonality of $\mathcal G(g, \Lambda)$ implies the Bessel inequality of the system:
\begin{align}\label{Bessel}
\sum_{(t,\xi)\in \Lambda} |V_gf(t, \xi)|^2\leq \|f\|^2 \ , \quad \forall f\in L^2(\Bbb R^d).
\end{align}
Let $ s,\xi  \in \Bbb R^d$. The inequality (\ref{Bessel}) for $e^{2\pi i \langle \nu, x\rangle} f(x-s)$ in the place of $f$ yields
\begin{align}\notag
\sum_{(t,\xi)\in \Lambda} |V_gf(t-s, \xi-\nu)|^2\leq \|f\|^2  \quad \forall \ (s, \nu)\in \Bbb R^{2d}.
\end{align}
Hence, $|V_gf|^2\ast \delta_\Lambda \leq \| f\|^2$. Take $G:= \| f\|^{-2}|V_gf|^2$. Then $\int_{\Bbb R^{2d}}  G(z) dz=1$ and $G\ast \delta_\Lambda\leq 1$. On the other hand, $\Lambda$ is a lattice with  density $1$. Let $\Omega\subset \Bbb R^{2d}$ be any fundamental domain for $\Lambda$. Then  $|\Omega|=1$ and it tiles $\Bbb R^{2d}$ by  $\Lambda$. Therefore $\chi_\Omega\ast \delta_\Lambda=1$. Now Lemma \ref{GLW}  implies that    $G\ast \delta_\Lambda =1$. But this  is equivalent to the completeness of the system $\mathcal G(g, \Lambda)$ and we are done.
                  \end{proof}

\medskip

\noindent{\bf 3. Some reduction to lower triangular block matrices.} The following result  is due to Han and Wang which states that any  invertible integer matrix   can be converted into a  lower triangular integer  matrix. We will need it in later sections. An integer matrix $P$ is called {\it unimodular}  if $\det P=1$.

\medskip

\begin{lemma}\label{HanW1}(\cite{HanW4}, Lemma 4.4) Let $M$ be an $d\times d$ invertible integer matrix. Then there is an  $d\times d$  unimodular  matrix $P$ such that $MP$ is a  lower triangular integer matrix.
\end{lemma}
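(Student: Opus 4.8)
The plan is to prove the statement constructively by induction on the dimension $d$, reading right multiplication by a unimodular matrix as a sequence of integer column operations that preserve the determinant. The base case $d=1$ is immediate, since a $1\times1$ matrix is already lower triangular. For the inductive step I would first clear the first row of $M$ down to a single nonzero entry using column operations, then peel off the top-left corner and invoke the inductive hypothesis on the remaining $(d-1)\times(d-1)$ block. (This is, in effect, a column Hermite normal form argument, but I prefer to give it explicitly so that the determinant-$1$ condition is transparent.)

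The key building block is a $2\times 2$ B\'ezout reduction. Given integers $a,b$ not both zero with $g=\gcd(a,b)>0$, B\'ezout's identity supplies $x,y\in{\mathbb Z}$ with $ax+by=g$, and then the matrix
$$
U=\begin{pmatrix} x & -b/g \\ y & a/g \end{pmatrix}
$$
has $\det U=(ax+by)/g=1$ and satisfies $(a,b)\,U=(g,0)$. Embedding $U$ into columns $1$ and $j$ of the $d\times d$ identity produces a unimodular operation that replaces the first-row pair consisting of entries in columns $1$ and $j$ by $(\gcd,0)$ while leaving all other columns alone. Applying this successively for $j=2,\dots,d$ (skipping any column whose first-row entry is already $0$) reduces the first row of $M$ to $(g_1,0,\dots,0)$, where $g_1$ is the gcd of the original first-row entries; here $g_1\neq0$ because $M$ is invertible, hence its first row is nonzero. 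Calling the resulting matrix $MP_1$, the factor $P_1$ is a product of matrices of determinant $1$, so $P_1$ is unimodular.

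Next I would exploit the resulting block structure. After clearing the first row we have $MP_1=\begin{pmatrix} g_1 & 0 \\ v & M'' \end{pmatrix}$ with $v\in{\mathbb Z}^{d-1}$ and $M''$ an integer $(d-1)\times(d-1)$ block; expanding the determinant along the first row gives $\det(MP_1)=g_1\det M''$, and since $\det(MP_1)=\det M\neq0$ and $g_1\neq0$ we conclude $\det M''\neq0$, so $M''$ is invertible. By the inductive hypothesis there is a unimodular $Q\in GL(d-1,{\mathbb Z})$ with $M''Q$ lower triangular. Setting $P_2=\mathrm{diag}(1,Q)$, which is again unimodular, a direct block multiplication gives $MP_1P_2=\begin{pmatrix} g_1 & 0 \\ v & M''Q \end{pmatrix}$, which is lower triangular. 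Taking $P=P_1P_2$ completes the induction, with $\det P=1$ throughout.

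The only real care required---and the step I expect to be the main bookkeeping obstacle---is tracking exactly which entries the column operations disturb. Each B\'ezout step alters columns $1$ and $j$ in \emph{all} rows, so one must verify that a first-row entry, once zeroed, is not reactivated by a later step: this holds because a subsequent operation on column $1$ pairs it only with a column $k>j$, leaving column $j$ untouched. One must also check that the inductive operations encoded by $P_2$ act solely on columns $2,\dots,d$, which all carry a zero in the first row, so combining them preserves the zeros already created there. A secondary subtlety is insisting on $\det P=1$ rather than $\pm1$; this is automatic, since every B\'ezout block and every embedding has determinant exactly $1$, so no sign correction is ever needed.
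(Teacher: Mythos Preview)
Your argument is correct: this is the standard column-Hermite-normal-form construction, carried out carefully so that every elementary factor has determinant exactly $1$. The B\'ezout $2\times2$ blocks are set up correctly, the induction is sound, and your bookkeeping remarks address the only genuine pitfalls (previously zeroed first-row entries are not disturbed by later steps, and the block $P_2=\mathrm{diag}(1,Q)$ preserves the first-row zeros).

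As for comparison with the paper: there is nothing to compare. The paper does not prove this lemma at all; it simply quotes it as \cite{HanW4}, Lemma~4.4. Your write-up therefore supplies a self-contained proof where the paper gives only a citation. If you wanted to shorten it for inclusion, you could simply invoke the Hermite normal form over ${\mathbb Z}$: every invertible integer matrix $M$ is column-equivalent over $GL(d,{\mathbb Z})$ to a lower triangular integer matrix, and the column-operation matrix $P$ can be taken with $\det P=1$ (adjusting by a single column sign-flip absorbed into a B\'ezout block if necessary---though, as you note, your construction already yields $\det P=1$ without any correction).
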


%\begin{proof} Assume that $d_1$ is the least common divisor of the first row of $M$, i.e.  $d_1=gcd(m_{1j}, 1\leq j\leq d)$. By a simple fact from number theory, there are integers  $x_i$ such that $gcd(x_j)=1$ and $x_1 m_{11}+x_2m_{12}+\cdots+x_dm_{1d}=d_1$. Let  $P_1$ be the unimodular matrix  whose  first column is $(x_1, \cdots, x_d)^t$. (For the existence of such matrix consult \cite[Theorem II 1]{Ne}.)  Put $M_1=MP_1$. It is obvious that the $(1,1)$-entry of the matrix $M_1$ is $d_1$ and other entries of the matrix on the first row are multiples of $d_1$. Thus by the Gaussian elimination method, the matrix $M_1$ can be reduced to a matrix where the entries of $M_1$ on the first row are all replaced by $0$ and the $(1,1)$-entry is $d_1$. Let us call this matrix $M_2$.  Note that there is a unimodular matrix $P_2$ such that $M_2= M_1P_2$.   By the  induction one can show that there is finite many unimodular integer matrices $P_i$ such that $MP_1P_2\cdots P_{k}$ is a lower triangular integer matrix, as desired.
%\end{proof}

As a corollary of Lemma  \ref{HanW1} we can show that any rational matrix can be represented as a  lower triangular rational matrix.

\medskip

\begin{corollary}\label{M=N}
Let $M$ be an $d\times d$ invertible rational matrix. Then there is a    lower triangular rational matrix $N$ such that $M(\Bbb Z^d) = N(\Bbb Z^d)$.
\end{corollary}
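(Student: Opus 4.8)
The plan is to reduce to the integer case and then invoke Lemma \ref{HanW1}. First I would clear denominators: since $M$ has rational entries, there is a positive integer $q$ such that $qM$ is an invertible integer matrix. Applying Lemma \ref{HanW1} to $qM$ produces a unimodular integer matrix $P$ for which $(qM)P$ is a lower triangular integer matrix.

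Next I would set $N := MP = \frac{1}{q}\big((qM)P\big)$. Multiplication by the scalar $\frac{1}{q}$ preserves lower triangularity and turns the integer entries of $(qM)P$ into rational numbers, so $N$ is a lower triangular rational matrix. It then remains to verify that $M(\Bbb Z^d) = N(\Bbb Z^d)$. Because $P$ is unimodular we have $\det P = 1$, and Cramer's rule shows that $P^{-1}$ is again an integer matrix; hence $P$ maps $\Bbb Z^d$ bijectively onto itself, i.e. $P(\Bbb Z^d) = \Bbb Z^d$. Consequently $N(\Bbb Z^d) = MP(\Bbb Z^d) = M\big(P(\Bbb Z^d)\big) = M(\Bbb Z^d)$, as required.

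The argument is essentially routine, so there is no genuine obstacle. The only point that merits a moment's care is the observation that a unimodular change of basis $P$ acts as the identity on the lattice $\Bbb Z^d$; this is precisely what lets us pass from $(qM)P$ back to $M$ without changing the generated lattice. Granting this, the corollary is an immediate consequence of Lemma \ref{HanW1}.
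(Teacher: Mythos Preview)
Your proof is correct and follows exactly the approach the paper intends: the corollary is stated without proof, but precisely this argument---clear denominators by a positive integer $q$, apply Lemma \ref{HanW1} to the integer matrix $qM$ to get a unimodular $P$ with $(qM)P$ lower triangular, then set $N = q^{-1}(qM)P$ and use $P(\Bbb Z^d)=\Bbb Z^d$---is carried out verbatim in the proof of Theorem \ref{rational_dim1}. Nothing needs to be added.
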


Henceforth, we shall say matrix $M$ is {\it equivalent} to $N$ if $M(\Bbb Z^d) = N(\Bbb Z^d)$.

\medskip

\noindent{\bf 4. Exponential Completeness.}\label{exponential completeness} Recall that a collection of functions $\{\varphi_n\}$ is said to be {\it complete} in $L^2(\Omega)$ if $\langle f, \varphi_n\rangle =0$ for all $n$ implies that $f=0$ a.e. on $L^2(\Omega)$. Given $f\in L^2({\mathbb R}^d)$, the Fourier transform of $f$ is defined to be $\widehat{f}(\xi)= \int_{{\mathbb R}^d}f(x)e^{-2\pi i \langle\xi, x\rangle}dx$. In our study, we will need to following weaker notion of the completeness property. 

\medskip
 
\begin{definition}\label{exponential completeness} 
 Let $\Lambda$ be a countable set and let $\Omega$ be a Lebesgue measurable set with positive finite measure. We say that the set of exponentials $\{e^{2\pi i \langle\lambda,x\rangle}:\lambda\in\Lambda\}$ (or  $\Lambda$) is {\it exponentially complete} for $L^2(\Omega)$ if there does not exist any $\xi\in{\mathbb R}^d$ such that 
 %an exponential function $e^{2\pi i \langle \xi,x\rangle}$ such that 
 $$
 \widehat{\chi_{\Omega}}(\lambda-\xi)=\int_{\Omega}  e^{2\pi i \langle \xi ,x\rangle}  e^{-2\pi i \langle \lambda, x\rangle}  dx =0, \ \forall \lambda\in\Lambda.
 $$
 \end{definition}

\medskip

\begin{remark}\label{remark2}
Throughout the paper, we will see that exponential completeness plays an important role in constructing Gabor orthonormal basis using non-separable lattices.  If a countable set of exponentials is complete for $L^2(\Omega)$, then it must be exponentially complete (otherwise $e^{2\pi i \langle \xi,x\rangle}$ will be orthogonal to all $e^{2\pi i \langle\lambda,x\rangle}$ contradicting completeness). However, the converse is not true.
 For example, the set of exponentials associated to the lattice $\Lambda=\sqrt{2}{\mathbb Z}$ is exponentially complete in $L^2([0,1])$, but it is not complete in it (see Lemma \ref{lemma 4}).   In Appendix 1, we will give a short study about the exponential completeness for lattices in $L^2[0,1]^d$.
\end{remark}

\section{Proof of Lemma \ref{Th_union of FD} - Union of fundamental domains}\label{proof of Theorem 1.3} 
 We now prove our Theorem \ref{Th_union of FD}. It follows from two theorems in two separate fields. The first one is taken from the study of Fuglede's problems.  It was first proved by Jorgensen and Pedersen \cite[Theorem 6.2 (b)]{JoPe}  and then Lagarias and Wang \cite[Theorem 2.1]{LW97} gave a simpler proof.

\medskip

\begin{theorem}\label{thJoPeLgWa}
Let $\Omega\subset{\mathbb R}^d$ be a  Lebesgue measurable set with positive finite measure. Suppose $\Gamma$ is a full-rank lattice such  that $\Gamma \subseteq \{\xi: \widehat{\chi_{\Omega}}(\xi) = 0\} \cup \{0\}$. Then
$
\Omega = \bigcup_{j=1}^N D_j$, up to measure zero, where $D_j$ are  fundamental domains for the dual lattice $\Gamma^\perp$, $|D_i\cap D_j|=0, \ i\neq j$ ~ and $N=|\Omega|/|D_j|$.
 \end{theorem}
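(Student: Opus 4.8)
The plan is to reduce the statement to the single assertion that the hypothesis forces $\chi_\Omega$ to have \emph{constant} $\Gamma^\perp$-periodization, and then to decompose a constant multi-tile into fundamental domains. First I would form the periodization of $\chi_\Omega$ along the dual lattice $\Gamma^\perp$,
\[
F(x) = \sum_{v\in\Gamma^\perp}\chi_\Omega(x-v).
\]
Since $|\Omega|<\infty$, this is a well-defined nonnegative function in $L^1$ of the torus $\R^d/\Gamma^\perp$, and, being a sum of indicator functions, it takes nonnegative integer values almost everywhere; in particular $F$ is finite a.e.

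Second, I would compute the Fourier coefficients of $F$ viewed as a function on $\R^d/\Gamma^\perp$. The characters of this torus are exactly the exponentials $x\mapsto e^{2\pi i\langle\gamma,x\rangle}$ with $\gamma$ ranging over the double dual $(\Gamma^\perp)^\perp=\Gamma$, because for such $\gamma$ and any $v\in\Gamma^\perp$ one has $\langle\gamma,v\rangle\in\Z$, so these exponentials are genuinely $\Gamma^\perp$-periodic. Unfolding the periodization (Weil's formula, using precisely this periodicity after the substitution $x\mapsto x-v$) gives, for each $\gamma\in\Gamma$, that the coefficient of $F$ at this character equals $\tfrac{1}{|D_j|}\widehat{\chi_\Omega}(\gamma)$, where $|D_j|=\mathrm{covol}(\Gamma^\perp)$ is the common measure of any fundamental domain of $\Gamma^\perp$. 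By hypothesis $\widehat{\chi_\Omega}(\gamma)=0$ for every $\gamma\in\Gamma\setminus\{0\}$, so all the nonzero coefficients vanish, and the uniqueness theorem for $L^1$ Fourier series forces $F$ to equal its mean $|\Omega|/|D_j|$ a.e. Since $F$ is integer-valued a.e., this mean is a nonnegative integer $N=|\Omega|/|D_j|$; equivalently, $\Omega$ multi-tiles $\R^d$ by $\Gamma^\perp$ with constant multiplicity $N$.

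Finally, I would convert the constant-multiplicity multi-tiling into the desired partition. Fix a fundamental domain $Q$ of $\Gamma^\perp$ and let $\pi:\R^d\to Q$ be the quotient map; for a.e.\ $x\in Q$ the fiber $\pi^{-1}(x)\cap\Omega$ consists of exactly $N$ points. Enumerating $\Gamma^\perp=\{\ell_m\}_{m\ge 1}$ and listing, for each such $x$, the indices $m$ with $x+\ell_m\in\Omega$ in increasing order as $m_1(x)<\cdots<m_N(x)$, I set $D_k=\{\,x+\ell_{m_k(x)}:x\in Q\,\}$ for $k=1,\dots,N$. Each $D_k$ is measurable, since its defining conditions are Boolean combinations of the measurable sets $(\Omega-\ell_m)\cap Q$; it contains exactly one representative of almost every $\Gamma^\perp$-coset and is therefore a fundamental domain of $\Gamma^\perp$ with $|D_k|=|Q|=|D_j|$. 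By construction the $D_k$ are pairwise almost disjoint and $\Omega=\bigcup_{k=1}^N D_k$ up to measure zero, which is the claimed decomposition.

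The main obstacle I anticipate is not the Fourier computation, which is clean once the duality $(\Gamma^\perp)^\perp=\Gamma$ is used to identify the torus characters, but rather the bookkeeping in the last step: justifying the measurable selection that upgrades \emph{constant multi-tiling} to an honest partition into fundamental domains, and checking that each selected piece meets almost every coset exactly once. One should also be slightly careful to argue the constancy of $F$ within $L^1$ (via uniqueness of Fourier coefficients) rather than presuming $F\in L^2$ a priori.
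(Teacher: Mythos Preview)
Your argument is correct. Note, however, that the paper does not supply its own proof of this theorem: it is quoted as a known result, attributed to Jorgensen--Pedersen \cite{JoPe} and (for a simpler argument) Lagarias--Wang \cite{LW97}. Your approach---periodize $\chi_\Omega$ along $\Gamma^\perp$, identify the Fourier coefficients of the periodization on $\R^d/\Gamma^\perp$ with $\widehat{\chi_\Omega}(\gamma)$ for $\gamma\in\Gamma$, conclude via $L^1$ Fourier uniqueness that the periodization is the constant $N=|\Omega|/\mathrm{covol}(\Gamma^\perp)$, and then peel off $N$ fundamental domains by a measurable selection---is essentially the Lagarias--Wang proof, so there is nothing to contrast. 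The care you take with $L^1$ (rather than $L^2$) uniqueness and with the measurability of the selected pieces is appropriate and suffices.
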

%Let $\Gamma=B(\Bbb Z^d)$ be a full lattice in $\Bbb R^d$ for some $d\times d$ invertible matrix $B$.
%As a consequence of the preceding theorem, notice that if $|\Omega||\det(B)|=1$ and   $B({\mathbb Z}^d)\setminus\{0\}\subseteq \{\xi: \widehat{\chi_{\Omega}}(\xi) = 0\}$, then  $\Omega$ is a fundamental domain of  $(B^t)^{-1}({\mathbb Z}^d)$ which also tiles.

\medskip

Given a lattice  $\Lambda=M(\Bbb Z^{2d})$, the   {\it adjoint lattice} $\Lambda^\circ$ is   a lattice such that
                  $$J(\Lambda^\circ)=  \Lambda^\perp $$
                  where
             $J=  \left(
                    \begin{array}{cc}
                      O & -I \\
                       I & O \\
                    \end{array}
                  \right)$. In other words, $\Lambda^\circ= J^{-1}M^{-t}(\Bbb Z^d)$.

The  Ron-Shen duality theorem \cite{RS-duality97} is well-known in Gabor analysis. It was first proved over symplectic lattices, it is known to be true over any lattice (see e.g \cite[Theorem 2.3]{Groechenig-mystery} for a proof by Poisson Summation Formula). We will need the following version of duality theorem.

\medskip

\begin{theorem}\label{thRS} ${\mathcal G}(g,\Lambda)$ is a Gabor orthonormal basis if and only if ${\mathcal G}(g,\Lambda^{\circ})$ is a Gabor orthonormal basis.
\end{theorem}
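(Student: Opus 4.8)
The plan is to derive this duality from the \emph{fundamental identity of Gabor analysis} (FIGA), a Poisson summation formula pairing sums over $\Lambda$ with sums over its adjoint $\Lambda^\circ=J^{-1}M^{-t}(\mathbb Z^{2d})$, together with the automatic completeness supplied by Proposition \ref{complt}. First I would record two structural reductions. The adjoint operation is involutive, $(\Lambda^\circ)^\circ=\Lambda$, and if $\mbox{dens}(\Lambda)=1$ then $\mbox{dens}(\Lambda^\circ)=1$ as well, since the covolume of $\Lambda^\circ=J^{-1}M^{-t}(\mathbb Z^{2d})$ equals $|\det M|^{-1}=1$. Hence it suffices to prove a single implication; applying it to $\Lambda^\circ$ in place of $\Lambda$ yields the converse.

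So assume $\mathcal G(g,\Lambda)$ is an orthonormal basis. Then $\|g\|=\|\pi(0)g\|=1$, and the density theorem \cite{RS} forces $\mbox{dens}(\Lambda)=1$, so Proposition \ref{complt} applies to $\Lambda^\circ$: it is enough to prove that $\mathcal G(g,\Lambda^\circ)$ is an orthonormal \emph{system}, completeness being then automatic. Since $|\langle\pi(\mu)g,\pi(\mu')g\rangle|=|V_gg(\mu-\mu')|$ for $\mu,\mu'\in\Lambda^\circ$, this amounts to showing $V_gg(\mu)=\langle g,\pi(\mu)g\rangle=0$ for every $\mu\in\Lambda^\circ\setminus\{0\}$.

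Next comes the bridge. Being an orthonormal basis, $\mathcal G(g,\Lambda)$ is in particular a Parseval frame, i.e. $\sum_{\lambda\in\Lambda}|V_gf(\lambda)|^2=\|f\|^2$ for all $f\in L^2(\mathbb R^d)$. The FIGA (equivalently Janssen's representation of the frame operator), proved by the Poisson summation formula as in the cited Ron--Shen references, rewrites the left-hand side as a sum over the adjoint lattice,
\[
\|f\|^2=\sum_{\lambda\in\Lambda}|V_gf(\lambda)|^2=\sum_{\mu\in\Lambda^\circ}\langle f,\pi(\mu)f\rangle\,\overline{\langle g,\pi(\mu)g\rangle}\qquad(f\in L^2(\mathbb R^d)).
\]
The term $\mu=0$ contributes $\|f\|^2\,\overline{\|g\|^2}=\|f\|^2$, so the remaining terms cancel:
\[
\sum_{\mu\in\Lambda^\circ\setminus\{0\}}\langle f,\pi(\mu)f\rangle\,\overline{\langle g,\pi(\mu)g\rangle}=0\qquad\text{for all }f\in L^2(\mathbb R^d).
\]
It remains to ``read off'' the coefficients, i.e. to conclude $\langle g,\pi(\mu)g\rangle=0$ for each $\mu\ne 0$. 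This follows from the linear independence of the distinct time--frequency shifts $\{\pi(\mu):\mu\in\Lambda^\circ\}$ (equivalently of the quadratic forms $f\mapsto\langle f,\pi(\mu)f\rangle$), which is precisely the uniqueness statement underlying Janssen's representation. The conceptual reason $\Lambda^\circ$ is the correct index set is that $\pi(\mu)$ commutes with every $\pi(\lambda)$, $\lambda\in\Lambda$, exactly when $\mu\in\Lambda^\circ$, so the frame operator, which intertwines all $\pi(\lambda)$, can only expand over these shifts.

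The main obstacle is this analytic core: establishing the FIGA and the coefficient read-off for a general window $g\in L^2(\mathbb R^d)$, since the Poisson summation and the interchange of summation need not converge absolutely outside Feichtinger's algebra $M^1$ (and $g=|K|^{-1/2}\chi_K$ is typically not in $M^1$). I would handle this either by invoking the version of the Ron--Shen duality theorem already valid on all of $L^2$ (as cited in the excerpt), or by first proving the identity for $f$ in a dense class of well-localized test functions (e.g. Schwartz functions), where both sides are continuous and the interchange is justified, and then extending the vanishing conclusion by density together with the continuity of $V_gf$. Once the coefficients are identified, the orthonormality of $\mathcal G(g,\Lambda^\circ)$, and hence via Proposition \ref{complt} its basis property, follow at once, completing the proof.
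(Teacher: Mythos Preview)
Your proof is correct and follows essentially the same strategy as the paper's: reduce to one implication by $(\Lambda^\circ)^\circ=\Lambda$, establish the Wexler--Raz orthogonality $\langle g,\pi(\mu)g\rangle=0$ for $\mu\in\Lambda^\circ\setminus\{0\}$, and conclude completeness via Proposition~\ref{complt}. The only difference is that the paper obtains the vanishing $\langle g,\pi(\mu)g\rangle=0$ by a direct citation of \cite[Theorem~2.3]{Groechenig-mystery} (the Wexler--Raz/Janssen characterization of Parseval Gabor frames), whereas you unpack the FIGA argument underlying that citation; your discussion of the $L^2$ convergence issue is exactly the technical point that the cited theorem absorbs.
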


\begin{proof}[Sketch of Proof.] This statement is  well-known. Here we provide a simple proof based on \cite[Theorem 2.3]{Groechenig-mystery} and Proposition \ref{complt}. Since $(\Lambda^{\circ})^{\circ} =\Lambda$, both sides of the statements are symmetric and we just need to prove one side of the equivalence. Suppose that ${\mathcal G}(g,\Lambda)$ is a Gabor orthonormal basis. Then $g$ is the only dual window with the property that 
$$
\langle g, \pi (\mu)(g)\rangle = 0 \ \forall \mu\in\Lambda^{\circ}\setminus \{0\}
$$
(by \cite[Theorem 2.3]{Groechenig-mystery}). This means that for all distinct $\mu,\mu'\in\Lambda^{\circ}$,  $\langle \pi(\mu)g, \pi (\mu')(g)\rangle = c \langle g, \pi (\mu-\mu')(g)\rangle = 0$ ($c$ is some unimodular constant). Thus ${\mathcal G}(g,\Lambda^{\circ})$ is mutually orthogonal. As ${\mathcal G}(g,\Lambda)$ is a Gabor orthonormal basis, $\|g\|=1$ and dens$(\Lambda^{\circ})$ = dens$(\Lambda)$ =1, by Proposition \ref{complt}, ${\mathcal G}(g,\Lambda^{\circ})$ is complete and is thus an orthonormal basis.
\end{proof}

For a lower triangular lattice $\Lambda=  \left(
                                      \begin{array}{cc}
                                        A & O \\
                                        C & B \\
                                      \end{array}
                                    \right){\mathbb Z}^{2d}$,  the adjoint lattice  $\Lambda^\circ$  is also a lower triangular and it can be calculated as follows:
$$
\Lambda^{\circ} = \left(
                    \begin{array}{cc}
                      O & I \\
                      -I & O \\
                    \end{array}
                  \right)\left(
                    \begin{array}{cc}
                      A^{-t} & -A^{-t}C^tB^{-t} \\
                      O & B^{-t} \\
                    \end{array}
                  \right)  ({\mathbb Z}^{2d}) = \left(
                    \begin{array}{cc}
                      O & B^{-t} \\
                      -A^{-t} & A^{-t}C^tB^{-t} \\
                    \end{array}
                  \right) ({\mathbb Z}^{2d}).
$$

From the other hand we can write  $$\left(
                    \begin{array}{cc}
                      O & B^{-t}\\
                      -A^{-t}& A^{-t}C^tB^{-t} \\
                    \end{array}
                  \right)  =  \left(
                    \begin{array}{cc}
                       B^{-t} & O\\
                       A^{-t}C^tB^{-t} & A^{-t}  \\
                    \end{array}
                  \right)
                  \left(
                    \begin{array}{cc}
                      O & I \\
                      -I & O\\
                    \end{array}
                  \right)({\mathbb Z}^{2d}).
$$
But  $\left(
                    \begin{array}{cc}
                      O & I \\
                      -I & O\\
                    \end{array}
                  \right) ({\mathbb Z}^{2d})={\mathbb Z}^{2d}$, therefore we have
$$
\Lambda^{\circ}= \left(
                    \begin{array}{cc}
                       B^{-t} & O\\
                       A^{-t}C^tB^{-t} & A^{-t}  \\
                    \end{array}
                  \right) ({\mathbb Z}^{2d}).
$$

\begin{proof}[Proof of Theorem \ref{Th_union of FD}]
The orthogonality of the Gabor system  implies that
 \begin{align}\label{ortho}
 \int_K e^{-2\pi i \langle  Bn, x\rangle} dx = 0 \quad \forall \ n\in \Bbb Z^d\setminus\{0\} .
 \end{align}
 By Theorem \ref{thJoPeLgWa}, (\ref{ortho}) implies      that  $K$ can be written as
 $
 K = \bigcup_{j=1}^N D_j,
 $
 where $D_j$  is a fundamental domain  for $B^{-t}({\mathbb Z}^d)$. On the other hand,  by the duality Theorem \ref{thRS},  ${\mathcal G}({|K|^{-1/2}}\chi_K,\Lambda^{\circ})$ is a Gabor orthonormal basis, too.  Similarly,   the exponentials $\{e^{2\pi i  \langle A^{-t}n, x\rangle}: n\in {\mathbb Z}^d\}$ are mutually orthogonal  in $L^2(K)$. Hence, by Theorem \ref{thJoPeLgWa}, we have
 $
 K = \bigcup_{j=1}^M E_j
 $
 where $E_j$\rq{}s are fundamental domains for $A({\mathbb Z}^d)$.  Since $\det(AB)=1$  we conclude that  $|D_i|=|E_j|, \forall \ i, j$. Since $|K|<\infty$, the latter forces that $M=N$,  hence the proof of the theorem is completed.
\end{proof}

\section{Proof of Theorems \ref{lower triangle} and \ref{rational_dim1} - Lower triangular matrices}\label{thm:lower triangle}

%We will consider the lower triangular matrices in this section and prove Theorem \ref{lower triangle}.
To prove Theorem \ref{lower triangle},  first we shall apply  some preliminary reductions to  the theorem, as follows.   Due to   Lemma  \ref{lemma 3-prim}  and  the hypothesis of  the theorem  on the matrices $A$ and $B$,   for the proof  it is sufficient
  to  assume that
$
\Lambda = \left(
                    \begin{array}{cc}
                       I & O\\
                       C & B\\
                    \end{array}
                  \right) ({\mathbb Z}^{2d}) ,
 $
 where $B$ is an invertible matrix with integer entries (since originally $A^tB$ has integer entries by the assumption of Theorem \ref{lower triangle}). Notice by the density condition dens$(\Lambda) = 1$,  we have $|\det(B)|=1$. Thus $B^{-1}$ is also an integral matrix with determinant 1  and we have
  $\Bbb Z^{2d} = \left(
                    \begin{array}{cc}
                       I & O\\
                       O & B^{-1}\\
                    \end{array}
                  \right) ({\mathbb Z}^{2d})$. Thus
      we can rewrite $\Lambda$ as follows:
 $$
\Lambda = \left(
                    \begin{array}{cc}
                       I & O\\
                       C & B\\
                    \end{array}
                  \right) \left(
                    \begin{array}{cc}
                       I & O\\
                       O & B^{-1}\\
                    \end{array}
                  \right) ({\mathbb Z}^{2d}) = \left(
                    \begin{array}{cc}
                       I & O\\
                       C & I\\
                    \end{array}
                  \right) ({\mathbb Z}^{2d}).
                  $$
Therefore to prove Theorem \ref{lower triangle}   it suffices to consider $\Lambda = \left(
                    \begin{array}{cc}
                       I & O\\
                       C & I\\
                    \end{array}
                  \right) ({\mathbb Z}^{2d})$. In this case  if ${\mathcal G}\left(|K|^{-1/2}\chi_K,\Lambda\right)$ is an orthonormal basis,  then  by Theorem \ref{Th_union of FD} we must have
 $
K = \bigcup_{j=1}^N E_j
 $
where $E_j$\rq{}s are disjoint fundamental domains of ${\mathbb Z}^d$. In particular, $K$ is a multi-tile for $\Bbb R^d$ with respect to ${\mathbb Z}^d$, i.e.,
\begin{equation}\label{multitile}
\sum_{k\in{\Bbb Z}^d} \chi_K(x+k) = N \ \mbox{a.e.} \ x\in [0,1)^d.
\end{equation}
 Our goal is to show that $N=1$. For this, the following proposition will serve a key role.
  \medskip

\begin{proposition}\label{prop2} Suppose $K$ is a bounded set which multi-tiles $\Bbb R^d$ with respect to $\Bbb Z^d$ at level $N$,  i.e. (\ref{multitile}) holds.  Suppose that $N>1$. Then there exists $m \in{\mathbb Z}^d$ such that
\begin{enumerate}
\item $K\cap (K+m)$ has positive Lebesgue measure.
\item $ K\cap (K+m)$ consists of distinct representative   (mod ${\mathbb Z}^d$).
\item $K\cap (K+m)$ is a packing by ${\mathbb Z}^d$, i.e.
$$
\sum_{n\in {\mathbb Z}^d} \chi_{K\cap (K+m)} (x+n)\le 1. \ a.e.
$$
\end{enumerate}
\end{proposition}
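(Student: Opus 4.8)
The plan is to translate the three desired properties into a single fiberwise count over the torus $\mathbb{T}^d=\mathbb{R}^d/\mathbb{Z}^d$. For a.e. $\bar x\in\mathbb{T}^d$ with representative $x\in[0,1)^d$ I would set
$$
L(\bar x)=\{n\in\mathbb{Z}^d: x+n\in K\}.
$$
The multi-tiling hypothesis (\ref{multitile}) says exactly that $|L(\bar x)|=N$ a.e., and since $K$ is bounded, $L(\bar x)\subseteq B$ for a fixed finite set $B\subset\mathbb{Z}^d$ (if $K\subseteq[-R,R]^d$, take $B=\mathbb{Z}^d\cap[-R-1,R]^d$). The observation driving everything is that $x+n\in K\cap(K+m)$ iff $n\in L(\bar x)$ and $n-m\in L(\bar x)$, so the number of lattice translates of $\bar x$ lying in $K\cap(K+m)$ equals $|L(\bar x)\cap(L(\bar x)+m)|$. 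Thus property (1) for $m$ amounts to $|L(\bar x)\cap(L(\bar x)+m)|\ge 1$ on a positive-measure set of $\bar x$, while (2) and (3) amount to $|L(\bar x)\cap(L(\bar x)+m)|\le 1$ for a.e. $\bar x$.

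Next I would pass to a finite combinatorial problem. Because $L(\bar x)$ is always an $N$-element subset of the finite set $B$, it takes finitely many values, so $\mathbb{T}^d$ decomposes up to measure zero as $\bigsqcup_S\Omega_S$ with $\Omega_S=\{\bar x:L(\bar x)=S\}$, $S$ ranging over $N$-subsets of $B$. Let $\mathcal{S}$ be the finite family of patterns $S$ with $|\Omega_S|>0$; it is nonempty since $|\mathbb{T}^d|=1$. As $N>1$, every $S\in\mathcal{S}$ has a nonzero difference, so
$$
D:=\bigcup_{S\in\mathcal{S}}(S-S)\setminus\{0\}
$$
is a nonempty finite set of nonzero integer vectors, with $D=-D$. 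In this language it suffices to find $m^\ast\in D$ with $|S\cap(S+m^\ast)|\le 1$ for every $S\in\mathcal{S}$: membership $m^\ast\in D$ gives (1), since on the corresponding $\Omega_{S_0}$ a fixed pair realizing $m^\ast$ produces a positive-measure subset of $K\cap(K+m^\ast)$, while the uniform bound gives (2) and (3) by the fiber reformulation.

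The selection would be extremal. I would fix $\ell(v)=\langle v,\theta\rangle$ with $\theta$ chosen so $\ell$ is injective on the finite set $B-B$ (avoiding finitely many hyperplanes), and take $m^\ast$ maximizing $\ell$ over $D$; by symmetry of $D$ one gets $\ell(m^\ast)>0$. To verify $m^\ast$ works, suppose $|S\cap(S+m^\ast)|\ge 2$ for some $S\in\mathcal{S}$, and pick distinct $a,b\in S\cap(S+m^\ast)$ with $\ell(a)>\ell(b)$, so $a,\,b-m^\ast\in S$. Then $c:=a-(b-m^\ast)=(a-b)+m^\ast\in S-S$ has $\ell(c)=(\ell(a)-\ell(b))+\ell(m^\ast)>\ell(m^\ast)>0$, hence $c\ne 0$ and $c\in D$, contradicting maximality of $m^\ast$. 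This forces $|S\cap(S+m^\ast)|\le 1$ for all $S\in\mathcal{S}$.

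The main obstacle is precisely the need for a single $m$ that simultaneously yields a nonempty intersection on some positive-measure fiber family and remains a packing across every fiber pattern at once; boundedness is what makes the pattern set $\mathcal{S}$ finite and $N>1$ is what makes $D$ nonempty, and the extremal choice through a generic functional $\ell$ is what enforces the packing bound uniformly over $\mathcal{S}$. A final bookkeeping point is that (2) and (3) are first obtained up to a Lebesgue-null set; discarding that set gives the literal statement.
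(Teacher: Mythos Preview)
Your proof is correct and follows essentially the same route as the paper's: both decompose the torus fiberwise into finitely many pattern classes $S$ (the paper writes $K_S$ for your $\Omega_S$), then pick $m$ extremal among nonzero differences so that $|S\cap(S+m)|\le 1$ for every occurring pattern. The only cosmetic difference is the ordering device: the paper orders $\mathbb{Z}^d$ lexicographically and takes $m=\max_i(n_N^{S_i}-n_1^{S_i})$, whereas you use a generic linear functional $\ell=\langle\cdot,\theta\rangle$ injective on $B-B$ and maximize $\ell$ over $D$; both yield a translation-compatible total order on the finite difference set, and the contradiction arguments are identical in spirit.
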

 \begin{proof}
Put  $Q := [0,1)^d$. The identity (\ref{multitile}) means that for a.e. $x\in Q$, there are exactly $N$ integers $n_1,...,n_N$ such that  $x+n_i\in K$ $i=1,...,N$.  Using this observation we will  decompose $K$ as follows. For each $x\in Q$, put
$$
K(x) := \{k\in \Bbb Z^d: x+ k\in K\}.
$$
Then $|K(x)|=N$.
 Let $S \subset{\mathbb Z}^d$ and $|S| = N$. Define
$$
K_S = \{x\in Q: K(x) = S\}.
$$
Since $K$ is a multi-tile of level $N$, we have
$$
K = \bigcup_{|S|=N} (K_S+S), \ \mbox{and} \ \bigcup_{|S|=N} K_S = Q = [0,1)^d,
$$
where the union runs through all possible subsets $S\in{\mathbb Z}^d$ of cardinality $N$. Furthermore,
 $K_S \cap K_{S'} =\emptyset, \forall S\ne S'
$, since there are exactly $N$ integers $n$ for which $x+n\in K$. By the boundedness of $K$, there are only finitely many possible $S\in{\mathbb Z}^d$ with $|S|=N$ such that $|K_S|>0$. Thus, we can enumerate those $S$ as $S_1,...,S_r$ so that
\begin{equation}\label{decomp}
K = \bigcup_{i=1}^r (K_{S_i}+S_i).
\end{equation}

(Notice the   decomposition (\ref{decomp}) also holds for any multi-tile bounded set $K$  with respect to any lattice $\Gamma$ in place of $\Bbb Z^d$ and
 any bounded fundamental set of  $\Gamma$ in the place of $Q$.)

We order ${\mathbb Z}^d$ by the natural lexicographical ordering. We then enumerate all possible elements in $S_i$, $1\le i\leq r$, by
$$
 n_1^{S_i}<....<n^{S_i}_{N}
$$
Let
$$
 m_{S_i} =  n_N^{S_i}- n_1^{S_i}, \ \mbox{and} \  m= \max_{i=1,...,r} m_{S_i}.
$$
Note that for any $i\ne j$, since $K_{S_i}$ and $K_{S_j}$ are distinct subsets in $Q$, the intersection of  $K_{S_i}+{\mathbb Z}^d$ and $K_{S_j}+{\mathbb Z}^d$ has zero Lebesgue measure. As  $K_{S_i}+S_i\subset K_{S_i}+{\mathbb Z}^d$ and $K_{S_j}+S_j+ m\subset K_{S_j}+{\mathbb Z}^d$, the set $(K_{S_i}+S_i)\cap (K_{S_j}+S_j+ m)$ has zero Lebesgue measure for $i\neq j$. Therefore, up to Lebesgue measure zero, we have
$$
\begin{aligned}
K\cap(K+ m) =& \left(\bigcup_{i=1}^r (K_{S_i}+S_i)\right)\cap\left(\bigcup_{j=1}^r (K_{S_j}+S_j+m)\right)\\
=&\bigcup_{i=1}^r (K_{S_i}+S_i)\cap (K_{S_i}+S_i+ m)\\
\end{aligned}
$$
Note that $(K_{S_i}+S_i)\cap (K_{S_i}+S_i+ m)$ has positive Lebesgue measure if and only if $S_i\cap (S_i+{m})\ne \emptyset.$ By our construction of $ m$,
$$
S_i\cap (S_i+{m}) = \left\{
                  \begin{array}{ll}
                    \emptyset, & \hbox{if $m_{S_i}<m$;} \\
                    n_N^{S_i}, & \hbox{if $m_{S_i}= m$.}
                  \end{array}
                \right.
$$
Hence,
$$
K\cap(K+m) = \bigcup_{\{i: m_{S_i} = m\}} (K_{S_i}+  n_N^{S_i}).
$$
Note that $\{i :m_{S_i}= m\}$ is non-empty by our construction, thus $K\cap (K+m)$ has positive Lebesgue measure. On the other hand, $K_{S_i}$ consists of distinct representatives in $Q$. Therefore $K\cap(K+m)$ has distinct representatives (mod ${\mathbb Z}^d$) in $Q$. This implies (2). The conclusion (3) follows directly  from (2)  and  by the  definition of packing.
\end{proof}

\medskip
We remark that Proposition \ref{prop2} does not hold if $K$ is unbounded (see Example \ref{example3}).  The following well-known lemma also hold  (see \cite[Lemma 2.10]{GLi}).

 \medskip

\begin{lemma}\label{lemma 4}
Let $B$ be an $d\times d$ invertible matrix and $\Omega\subset \Bbb R^d$ be  a measurable set with positive and finite measure. The set $\{e^{2\pi i \langle B n, x\rangle}: n\in{\mathbb Z}^d\}$ is complete in $L^2(\Omega)$ if and only if $\Omega$ is a packing by $B^{-t}{\mathbb Z}^d$, i.e.
$$
\sum_{n\in {\mathbb Z}^d} \chi_{\Omega} (x+B^{-t}n)\le 1. \ \  a.e.\  x\in B^{-t}(Q) .
$$
Here, $Q=[0,1)^d$.
\end{lemma}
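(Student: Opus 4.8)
The plan is to convert the abstract completeness question on $L^2(\Omega)$ into a pointwise statement about the $\Gamma$-periodization of a test function, where $\Gamma:=B^{-t}{\mathbb Z}^d$. The crucial observation is that each exponential $e^{2\pi i \langle Bn,x\rangle}$ is invariant under translation by $\Gamma$, since for $k\in{\mathbb Z}^d$ one has $\langle Bn, B^{-t}k\rangle = \langle n,k\rangle\in{\mathbb Z}$. Moreover, the change of variable $x=B^{-t}y$ carries these exponentials to the standard Fourier characters $e^{2\pi i\langle n,y\rangle}$ on $Q=[0,1)^d$, so $\{e^{2\pi i\langle Bn,x\rangle}:n\in{\mathbb Z}^d\}$ is an orthogonal basis of $L^2(B^{-t}(Q))$, and $B^{-t}(Q)$ is a fundamental domain for $\Gamma$. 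This is the bridge between the hypothesis (a property of $\Gamma$-translates of $\Omega$) and the conclusion (completeness of the exponentials).

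For the direction ``packing $\Rightarrow$ completeness'', I would take $f\in L^2(\Omega)$ orthogonal to every $e^{2\pi i\langle Bn,x\rangle}$; extending $f$ by zero it lies in $L^1\cap L^2$ because $|\Omega|<\infty$. I then form the periodization $F(x)=\sum_{\gamma\in\Gamma} f(x+\gamma)$ on $B^{-t}(Q)$. Using the $\Gamma$-periodicity of the exponentials, a fold-and-unfold computation shows that the Fourier coefficients of $F$ against the orthogonal basis above are, up to the constant $|\det B|^{-1}$, exactly the vanishing inner products $\int_\Omega f(x)e^{-2\pi i\langle Bn,x\rangle}\,dx=0$. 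By uniqueness of Fourier coefficients for $L^1$ functions, $F=0$ a.e.\ on $B^{-t}(Q)$. The packing hypothesis guarantees that for a.e.\ $x$ at most one translate $x+\gamma$ belongs to $\Omega$, so the sum defining $F(x)$ has at most one nonzero summand; hence $F=0$ forces $f=0$ a.e., which is the desired completeness.

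For the converse I would argue by contrapositive. If $\Omega$ does not pack by $\Gamma$, the translates $\{\Omega+\gamma\}_{\gamma\in\Gamma}$ are not essentially disjoint, so there is a nonzero $m=B^{-t}k\in\Gamma$, with $k\in{\mathbb Z}^d\setminus\{0\}$, such that $W:=\Omega\cap(\Omega+m)$ has positive measure. I then set $f=\chi_W-\chi_{W-m}$; both $W$ and $W-m$ are contained in $\Omega$, so $f\in L^2(\Omega)$. A translation $x\mapsto x-m$ in the integral over $W-m$, combined with $e^{2\pi i\langle Bn,m\rangle}=e^{2\pi i\langle n,k\rangle}=1$, makes the two contributions cancel, so $\langle f, e^{2\pi i\langle Bn,\cdot\rangle}\rangle=0$ for every $n$. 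Finally $f\neq 0$, for otherwise $\chi_W=\chi_{W-m}$ a.e.\ would make $\chi_W$ periodic with period $m$, which is impossible for a set of finite positive measure; thus the exponentials fail to be complete.

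The delicate points are arithmetic and measure-theoretic rather than deep. In the converse, everything rests on the identity $\langle Bn, B^{-t}k\rangle=\langle n,k\rangle\in{\mathbb Z}$, which is precisely what forces the two pieces of $f$ to cancel against \emph{all} exponentials simultaneously; this is the step I expect to be the heart of the matter, and it explains why the lattice $B^{-t}{\mathbb Z}^d$ (and not some other scaling) is the correct one. In the forward direction the one place requiring care is the justification that $F\in L^1(B^{-t}(Q))$ with vanishing Fourier coefficients vanishes a.e., and that the packing hypothesis legitimately collapses the periodization to a single term almost everywhere; both are routine once the folding identity is set up correctly.
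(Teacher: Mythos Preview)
Your argument is correct in both directions. The periodization/folding computation for ``packing $\Rightarrow$ completeness'' is the standard one, and for the converse your test function $f=\chi_W-\chi_{W-m}$ works exactly as you say; the check that $f\not\equiv 0$ is sound (a set of finite positive measure cannot satisfy $W=W-m$ a.e.\ for $m\neq 0$, e.g.\ by looking at $\widehat{\chi_W}$).

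There is nothing to compare against: the paper does not supply its own proof of this lemma but simply cites \cite[Lemma~2.10]{GLi} as a well-known fact. Your write-up is precisely the argument one finds in that literature, so you have filled in the omitted proof rather than found an alternative route.
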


We are now ready to prove  Theorem \ref{lower triangle}.

 \begin{proof}[Proof of Theorem \ref{lower triangle}] As we mentioned above,
by   Lemma  \ref{lemma 3-prim} and the assumption that $A^tB$ is an integral matrix with $\det(A^tB)=1$,  it is sufficient to prove the theorem for lattices  in the form of
$$
\Lambda = \left(
                    \begin{array}{cc}
                       I & O\\
                       C & I  \\
                    \end{array}
                  \right) ({\mathbb Z}^{2d}).
$$
In this case, by the orthogonality of $\mathcal G(|K|^{-1/2}\chi_K, \Lambda)$,
the exponentials $\{e^{2\pi i \langle n, x\rangle}: \ n\in \Bbb Z^d\}$  are mutual  orthogonal in $L^2(K)$. Thus,
by Theorem \ref{Th_union of FD}, $K = \bigcup_{j=1}^N E_j$, where $E_j$\rq{}s are fundamental domains of ${\mathbb Z}^d$. 

\medskip

We claim that $N=1$. Suppose that $N>1$.   By  Proposition \ref{prop2}, there exists ${m}\in{\mathbb Z}^d$ such that  $K\cap (K+{m})$ has positive Lebesgue measure and $K\cap (K+{ m})$ is a packing  in $\Bbb R^d$ by ${\mathbb Z}^d$. Thus  by Lemma \ref{lemma 4}, the set $\{e^{2\pi i \langle  n,x\rangle}: n\in{\mathbb Z}^d\}$ is complete in $K\cap (K+{m})$. By Remark \ref{remark2}, ${\mathbb Z}^d$ is exponentially complete in $L^2(K\cap (K+{m}))$. Obviously, ${ m}=0$ cannot satisfy the packing property (3) in Proposition \ref{prop2} since we have assumed $N>1$. Thus  ${m}\ne 0$. On the other hand, the orthogonality of the Gabor system implies that for any $n\in \Bbb Z^d$ we must have
\begin{align}\label{eq}
   \widehat{\chi_{K\cap (K+{m})}} (C{m}+n)= \int_{\Bbb R^d} \chi_K(x)\chi_{K}(x-{m}) e^{-2\pi i \langle C { m}, x\rangle}e^{-2\pi i \langle  n, x\rangle}dx  =0
 \end{align}
 for all $n\in {\mathbb Z}^d$. 
This contradicts the exponential completeness of ${\mathbb Z}^d$. Therefore, the assumption $N>1$ cannot hold,  and $K$ is thus a fundamental domain of ${\mathbb Z}^d$. This completes the proof.
\end{proof}

\begin{remark} In proving Theorem \ref{lower triangle}, once $\Lambda$ is reduced to the lattice 
$${\mathcal A}({\mathbb Z}^{2d}), \ \mbox{where} \  {\mathcal A}=  \left(
                    \begin{array}{cc}
                       I & O\\
                       C & I  \\
                    \end{array}
                  \right),
$$
one may suspect that we can apply a metaplectic transformation to further reduce the lattice to the separable lattice ${\mathbb Z}^{d}\times {\mathbb Z}^d$ and hence the problem is trivially solved. Unfortunately, this approach does not seem to work. To be precise, one can look up \cite{Groechenig-book} for the metaplectic transformation in this case. We have that ${\mathcal G}(|K|^{-1/2}\chi_K,{\mathcal A}({\mathbb Z}^{2d}))$ is a Gabor orthonormal basis if and only if ${\mathcal G}( \tilde{g},{\mathbb Z}^{d}\times{\mathbb Z}^d)$ is a Gabor orthonormal basis, where 
$$
\tilde{g} = e^{-\pi  i\langle x,Cx\rangle}|K|^{-1/2}\chi_K.
$$
Although the lattice is separable, the window funciton $\tilde{g}$ is now complex-valued, we cannot conclude that $K$ has  no overlap in the time domain as it were the case when $C=O$. 

\smallskip

Nonetheless,  metaplectic transformation and symplectic matrices seems to provide some strong tools that may lead to a progess in the Fuglede-Gabor problem,  readers may refer to \cite{Folland,deGosson,Groechenig-book} for details about these tools.
\end{remark}

\medskip

 Theorem \ref{rational_dim1} is  now straightforward. We prove  it   here for the sake of completeness. 

\begin{proof}[Proof of Theorem \ref{rational_dim1}]
Let $\Lambda= M(\Bbb Z^2)$ where $M=\left(
                                      \begin{array}{cc}
                                       a & d \\
                                        c & b \\
                                      \end{array}
                                    \right)$  is a rational matrix with $\det M=1$. Let $q$ be the least common multiple of $a, b, c $ and $d$. Then we can write $\Lambda$ as  $\Lambda= q^{-1} \tilde M(\Bbb Z^2)$ where  $\tilde M$ is an integer matrix. By Lemma \ref{HanW1},  we can find a  unimodular integer matrix  $P$  such that $\tilde M P$ is the lower triangular integer matrix.  By the unimodularity of the matrix $P$  we have
                                     $q^{-1} \tilde M(\Bbb Z^2) = q^{-1}  \tilde M P(\Bbb Z^2)$. Therefore, $\Lambda = q^{-1}  \tilde M P(\Bbb Z^2)$ and  $q^{-1}  \tilde M P$ is a lower triangular rational matrix. We can therefore write
                                     $$
                                     \Lambda = \left(
                                      \begin{array}{cc}
                                       \alpha & 0 \\
                                        \gamma & \beta \\
                                      \end{array}
                                    \right)(\mathbb Z^2)$$
                                    for some $\alpha, \beta, \gamma\in \Bbb Q$.
                                    Notice that the density of $\Lambda$  equals $1$, meaning that $\alpha\beta = 1$. Thus, all assumptions of Theorem \ref{lower triangle} are satisfied. Hence, $K$ is a translational tile with tiling set $\Bbb Z$ and is  a spectral set. 
\end{proof}

 \section{  Proof of Theorem \ref{UT} - Upper triangular matrices}\label{thm:rational_dim1 and UT}
We will discuss a case of upper triangular matrices which can be converted into  the lower one. Then we will use   Theorem \ref{lower triangle} to prove the theorem.  First we need few lemmas.
\medskip

\begin{lemma}\label{Gamma-D}
    Let $D$  be  a $d\times d$  rational matrix.
    Then there is an integer  lattice $\Gamma$ such that $D\gamma\in \Bbb Z^d$ for $\gamma\in \Gamma$.
    \end{lemma}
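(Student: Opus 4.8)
The plan is to prove this by clearing denominators and then choosing $\Gamma$ to be a suitably scaled copy of the standard integer lattice. The statement is essentially a statement that any rational matrix becomes integral after restricting to a finite-index sublattice, and the construction is completely explicit.

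First I would write each entry of $D$ as $D_{ij} = p_{ij}/q_{ij}$ with $p_{ij}, q_{ij} \in \mathbb{Z}$ and $q_{ij} \ne 0$, which is possible precisely because $D$ is rational. Next I would let $q$ be the least common multiple of all the denominators $q_{ij}$ (equivalently, any common multiple, such as $\prod_{i,j} q_{ij}$, would do). Then the matrix $qD$ has all integer entries: indeed $(qD)_{ij} = q\, p_{ij}/q_{ij}$ and $q_{ij}\mid q$, so each entry is an integer.

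With $q$ in hand, I would take the integer lattice $\Gamma := q\,\mathbb{Z}^d$. This is a full-rank lattice consisting of integer vectors, hence an integer lattice in the required sense. For any $\gamma \in \Gamma$, write $\gamma = q n$ with $n \in \mathbb{Z}^d$; then
$$
D\gamma = D(qn) = (qD)n \in \mathbb{Z}^d,
$$
since $qD$ is an integer matrix and $n$ is an integer vector. This establishes the claim.

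There is no real obstacle here: the entire argument is a clearing-of-denominators computation, and the only point requiring any care is to verify that $\Gamma = q\mathbb{Z}^d$ is genuinely a full-rank lattice (it is, being $q$ times the standard lattice) so that it qualifies as the desired $\Gamma$. I expect the proof in the paper to be one or two lines along exactly these lines.
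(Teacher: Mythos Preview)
Your proof is correct and uses the same key idea as the paper: clear denominators so that $qD$ is an integer matrix, whence $q\mathbb{Z}^d$ has the desired property. The only difference is that the paper defines $\Gamma := \{k\in\mathbb{Z}^d : Dk\in\mathbb{Z}^d\}$, the \emph{maximal} such sublattice, and then uses your observation that $q\mathbb{Z}^d\subset\Gamma$ to conclude $\Gamma$ is full-rank (hence $\Gamma = M(\mathbb{Z}^d)$ for some invertible integer matrix $M$).

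For the lemma as stated this distinction is immaterial, but it is worth noting that the maximality of the paper's $\Gamma$ is actually used in the subsequent Lemma~5.2: there one needs the implication $D(\gamma_i-\gamma_j)\in\mathbb{Z}^d \Rightarrow \gamma_i-\gamma_j\in M(\mathbb{Z}^d)$, which holds precisely because $M(\mathbb{Z}^d)$ is defined as the full preimage lattice, and would fail for your smaller choice $q\mathbb{Z}^d$.
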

    \begin{proof}
 Define
$$
\Gamma := \{k\in{\mathbb Z}^d: Dk\in{\mathbb Z}^d\}.
$$
It is easy to check that $\Gamma$ is a lattice contained in $\Bbb Z^d$.  Moreover, $\Gamma$  contains $p\Bbb Z^d$ where
 $p$  is the least common multiple of the denominators of   entries of $D$.  Thus, $\Gamma$ is  a full-rank lattice and has the form of
$
\Gamma = M({\mathbb Z}^d),
 $
for some  invertible $d\times d$ matrix  $M$ with integer entries.
\end{proof}
Observe that according to the  Lemma \ref{Gamma-D},  for any given rational matrix $D$, there is an integer $M$ such that $DM$ is integer.
   With this observation, we have the following result.

 \medskip

\begin{lemma}\label{complete residue classes} Let $D$ be a rational matrix, and  let   $\Gamma$  and $M$   be given  as in Lemma \ref{Gamma-D} and  $\det M=n$.
Suppose that $\{\gamma_1,\cdots, \gamma_n\}$ be a complete representative (mod $M({\mathbb Z}^d)$) in ${\mathbb Z}^d$.  If $D$ is symmetric, then   $\{M^tD\gamma_1,\cdots,$ $ M^tD\gamma_n\}$ is a complete representative (mod $M^t{\mathbb Z}^d$) in ${\mathbb Z}^d$.
\end{lemma}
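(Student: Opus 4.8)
The plan is to show that the integer matrix $M^tD$ induces a well-defined group isomorphism from ${\mathbb Z}^d/M({\mathbb Z}^d)$ onto ${\mathbb Z}^d/M^t({\mathbb Z}^d)$; once this is established, a bijection between these two finite quotients automatically carries one complete set of representatives to another, which is exactly the assertion.

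First I would record the two facts about $M$ that flow from its definition in Lemma \ref{Gamma-D}. Since $\Gamma=M({\mathbb Z}^d)=\{k\in{\mathbb Z}^d: Dk\in{\mathbb Z}^d\}$, every column of $M$ lies in $\Gamma$, so $DM$ is an integer matrix. This is where the symmetry hypothesis enters: because $D=D^t$, we have $M^tD=M^tD^t=(DM)^t$, which is therefore also an integer matrix. In particular $M^tD\gamma_i\in{\mathbb Z}^d$ for each $i$, so the proposed representatives genuinely lie in ${\mathbb Z}^d$. I expect this integrality step to be the crux of the argument: without symmetry the matrix $M^tD$ need not be integral, and one cannot even make sense of the claimed representatives, so the statement would fail.

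Next I would check that the linear map $\gamma\mapsto M^tD\gamma$ descends to a homomorphism $\overline{\phi}\colon{\mathbb Z}^d/M({\mathbb Z}^d)\to{\mathbb Z}^d/M^t({\mathbb Z}^d)$. Indeed, if $\gamma-\gamma'=Mk$ with $k\in{\mathbb Z}^d$, then $M^tD(\gamma-\gamma')=M^t(DMk)\in M^t({\mathbb Z}^d)$, since $DMk\in{\mathbb Z}^d$ by the integrality of $DM$. Hence $\overline{\phi}$ is well defined, and it is plainly additive.

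Finally I would establish injectivity and finish by counting. If $M^tD\gamma\in M^t({\mathbb Z}^d)$, say $M^tD\gamma=M^tk$ with $k\in{\mathbb Z}^d$, then applying $(M^t)^{-1}$ gives $D\gamma=k\in{\mathbb Z}^d$, so $\gamma\in\Gamma=M({\mathbb Z}^d)$; thus the kernel of $\overline{\phi}$ is trivial. Both quotients have order $[{\mathbb Z}^d:M({\mathbb Z}^d)]=|\det M|=|\det M^t|=[{\mathbb Z}^d:M^t({\mathbb Z}^d)]=n$, so an injective homomorphism between them must be a bijection. Consequently $\overline{\phi}$ carries the complete set $\{\overline{\gamma_1},\dots,\overline{\gamma_n}\}$ bijectively onto all of ${\mathbb Z}^d/M^t({\mathbb Z}^d)$, which is precisely the statement that $\{M^tD\gamma_1,\dots,M^tD\gamma_n\}$ is a complete representative mod $M^t({\mathbb Z}^d)$.
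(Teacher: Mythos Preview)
Your proof is correct and follows essentially the same approach as the paper: both establish integrality of $M^tD$ via the symmetry $M^tD=(DM)^t$, prove distinctness/injectivity by invoking the definition $\Gamma=\{k:Dk\in{\mathbb Z}^d\}$, and then conclude by the cardinality count $|{\mathbb Z}^d/M({\mathbb Z}^d)|=|\det M|=|\det M^t|=|{\mathbb Z}^d/M^t({\mathbb Z}^d)|$. Your only addition is the explicit check that $\overline{\phi}$ is well defined on cosets, which the paper does not need because it works directly with the given representatives rather than with the quotient map.
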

\begin{proof} We saw above that by
  the structure of $M$ and $\Gamma$, $DM$ is an integer matrix, therefore $DM(\mathbb Z^d)\subseteq{\mathbb Z}^d$. We also have  $(DM)^t{\mathbb Z}^d\subseteq{\mathbb Z}^d$. This implies that
$$M^tD({\mathbb Z}^d) = (D^tM)^t({\mathbb Z}^d )\subseteq{\mathbb Z}^d.$$
Thus, $M^tD\gamma_i$ are all integer vectors for all $i = 1,\cdots,n$.

\medskip

Next, we show that $\{M^tD\gamma_1,...,M^tD\gamma_n\}$ consists of distinct representative (mod $M^t({\mathbb Z}^d)$) in $\Bbb Z^d$. Suppose that this is not the case. Then for some $i\neq j$ we must have  $M^tD\gamma_i - M^tD\gamma_j \in M^t({\mathbb Z}^d)$. This implies that $D\gamma_i-D\gamma_j\in {\mathbb Z}^d$, which means that  $\gamma_i$ and $\gamma_j$ belong  to the same representative class (mod $M({\mathbb Z}^d)$) in ${\mathbb Z}^d$ which is a contradiction.

\medskip

Finally, we show that $\{M^tD\gamma_1,...M^tD\gamma_n\}$ is complete.  This follows immediately by counting the  number of
cosets   present in ${\mathbb Z}^d/M^t({\mathbb Z}^d)$ which is  $|\det(M^t)| = |\det(M)|= n$. Hence, $\{M^tD\gamma_1,...M^tD\gamma_n\}$ is complete.
\end{proof}

In the following constructive lemma we  shall present   a class of   upper triangle lattice  which can be converted  into a lower triangular lattice.

  \begin{lemma}\label{a technical lemma}
            For   $\Lambda=
 \left(
                                      \begin{array}{cc}
                                        I & D \\
                                        O & I \\
                                      \end{array}
                                    \right)(\Bbb Z^{2d})$  with $D$  rational and symmetric, there are integer matrices $E$ and $X$ such that
         $\Lambda = \left(
                                      \begin{array}{cc}
                                       E^{-t} & O \\
                                        X & E\\
                                      \end{array}
                                    \right)({\mathbb Z}^{2d})$.
                                    \end{lemma}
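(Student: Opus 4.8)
The plan is to read the statement as a change-of-basis fact for lattices: two invertible matrices $G_1,G_2\in GL(2d,\mathbb{R})$ satisfy $G_1(\mathbb{Z}^{2d})=G_2(\mathbb{Z}^{2d})$ exactly when $G_1^{-1}G_2$ is an integer matrix with integer inverse, i.e. $G_1^{-1}G_2\in GL(2d,\mathbb{Z})$. Writing $G_1=\left(\begin{smallmatrix} I & D \\ O & I\end{smallmatrix}\right)$ and the desired $G_2=\left(\begin{smallmatrix} E^{-t} & O \\ X & E\end{smallmatrix}\right)$, I would first compute
\[
G_1^{-1}G_2 = \begin{pmatrix} I & -D \\ O & I \end{pmatrix}\begin{pmatrix} E^{-t} & O \\ X & E \end{pmatrix} = \begin{pmatrix} E^{-t}-DX & -DE \\ X & E \end{pmatrix}.
\]
Since $G_2$ is block lower triangular, $\det G_2 = \det(E^{-t})\det(E)=1=\det G_1$, so $\det(G_1^{-1}G_2)=1$ automatically. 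As an integer matrix of determinant $1$ is automatically unimodular, it therefore suffices to produce integer matrices $E,X$ (with $E$ invertible) for which both $DE$ and $E^{-t}-DX$ are integer matrices.

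First I would fix $E$. Applying Lemma \ref{Gamma-D} to the rational matrix $D$ gives an invertible integer matrix $M$ with $\Gamma=M(\mathbb{Z}^d)=\{k\in\mathbb{Z}^d : Dk\in\mathbb{Z}^d\}$, so that $DM$ is an integer matrix. Taking $E:=M$ immediately makes $E$ integer and $DE=DM$ integer, which settles two of the four requirements. The remaining task is to choose an integer matrix $X$ with $E^{-t}-DX=M^{-t}-DX$ integer.

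For $X$ I would argue column by column, which is where Lemma \ref{complete residue classes} (and hence the symmetry of $D$) enters. Let $n=\det M$ and let $\gamma_1,\dots,\gamma_n$ be a complete set of representatives of $\mathbb{Z}^d/M(\mathbb{Z}^d)$. By that lemma the integer vectors $M^tD\gamma_1,\dots,M^tD\gamma_n$ form a complete set of representatives of $\mathbb{Z}^d/M^t(\mathbb{Z}^d)$; in particular, for each standard basis vector $e_j$ there is an index $i_j$ with $M^tD\gamma_{i_j}\equiv e_j \pmod{M^t(\mathbb{Z}^d)}$. I would then let $X$ be the integer matrix whose $j$-th column is $\gamma_{i_j}$. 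Then the $j$-th column of $M^tDX-I$ lies in $M^t(\mathbb{Z}^d)$, so $M^tDX-I=M^tZ$ for some integer matrix $Z$; left-multiplying by $M^{-t}$ gives $DX-M^{-t}=Z\in\mathbb{Z}^{d\times d}$, i.e. $M^{-t}-DX$ is integer, as required.

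Combining these, $G_1^{-1}G_2$ is an integer matrix of determinant $1$, hence lies in $GL(2d,\mathbb{Z})$, giving $\Lambda=G_1(\mathbb{Z}^{2d})=G_2(\mathbb{Z}^{2d})$ and finishing the proof. The only genuinely delicate point is the construction of $X$: it is not clear a priori that the rational matrix $M^{-t}$ can be matched, modulo integer matrices, by $D$ times an integer matrix. This is precisely the surjectivity encoded in Lemma \ref{complete residue classes}, whose proof relies on $D$ being symmetric so that $M^tD=(DM)^t$ is integral; without symmetry the vectors $M^tD\gamma_i$ need not even be integer vectors and the whole scheme collapses.
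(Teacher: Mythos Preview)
Your proof is correct and takes essentially the same approach as the paper: both set $E=M$ from Lemma \ref{Gamma-D} and construct $X$ column by column via Lemma \ref{complete residue classes}, the only cosmetic difference being that the paper packages the unimodular change of basis as an explicit matrix $P=\left(\begin{smallmatrix} Z & -DM \\ X & M \end{smallmatrix}\right)$ and computes $G_1 P$ directly, whereas you verify the equivalent condition $G_1^{-1}G_2\in GL(2d,\mathbb{Z})$. Your write-up is arguably cleaner in isolating exactly which four blocks must be integer and why the determinant condition is automatic.
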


                                    \begin{proof}
                                     Let $\{{\bf e_i}: \ i=1, \cdots, d\}$ denote the standard basis in $\Bbb Z^d$. Associated to this $D$ let
                                      $\Gamma$ and $M$ be given as in Lemma \ref{Gamma-D}.
 Then by Lemma \ref{complete residue classes}, for any $i$ there exists  $z_i\in \Bbb Z^d$ and $\gamma^i\in \{\gamma_1, \cdots, \gamma_n\}$ such that ${\bf e_i}= M^tz_i + M^tD\gamma^i$.  Put $Z:=[z_1 \cdots z_d]$
 and $X=[\gamma_1\cdots \gamma_d]$.  It is clear that $Z$ and $X$ are integer matrices.
 %be the integer matrix consist of $z_i$ in i-th column and
 %$X$ be the integer matrix consist of $\gamma^i$ in the i-th column.
  A direct calculation shows that $(Z+DX)(\Bbb Z^d)= M^{-t}(\Bbb Z^d)$.

 \medskip

Put $P:=\left(
                                      \begin{array}{cc}
                                     Z& -DM \\
                                        X & M\\
                                      \end{array}
                                    \right)$. Then $P$ is an integer matrix with  $\det P=1$ and   $P(\Bbb Z^{2d}) = \Bbb Z^{2d}$. Recall that
 $DM$ is an integer matrix. So  we can  write
                                    $$\Lambda=
 \left(
                                      \begin{array}{cc}
                                        I & D \\
                                        O & I \\
                                      \end{array}
                                    \right)(\Bbb Z^{2d}) = \left(
                                      \begin{array}{cc}
                                        I & D \\
                                        O & I \\
                                      \end{array}
                                    \right)P(\Bbb Z^{2d}) = \left(
                                      \begin{array}{cc}
                                        M^{-t} & O \\
                                       X &  M\\
                                      \end{array}
                                    \right)(\Bbb Z^{2d}). $$ Now take $E=M$, and we are done.
\end{proof}
\medskip

At this point  we are ready to complete the proof of Theorem \ref{UT}.

\begin{proof}[Proof of Theorem \ref{UT}]
 Let $\Lambda=
 \left(
                                      \begin{array}{cc}
                                        A & D \\
                                        O & B \\
                                      \end{array}
                                    \right)(\Bbb Z^{2d})$ where  $B= A^{-t}$ and $\widetilde{D}: = A^{-1}D$ is rational and symmetric.  By Lemma \ref{lemma 3-prim},  the  associated matrix  can be reduced to a block matrix   of the form $\left(
                                      \begin{array}{cc}
                                        I &  \widetilde{D} \\
                                        O & I \\
                                      \end{array}
                                    \right)$ where $\widetilde{D}$ is   rational and symmetric. Therefore, it is sufficient to prove the theorem for  a lattice of the form
            $\Lambda=
 \left(
                                      \begin{array}{cc}
                                        I &D \\
                                        O & I \\
                                      \end{array}
                                    \right)(\Bbb Z^{2d})$  where $D$  is rational and symmetric.  By Lemma \ref{a technical lemma}, there are integer matrices $E$ and $X$ such that $\Lambda=  \left(
                                      \begin{array}{cc}
                                       E^{-t} & O \\
                                        X & E\\
                                      \end{array}
                                    \right)({\mathbb Z}^{2d})$.    The   rest of the proof is  now a conclusion of   Theorem \ref{lower triangle}  and we have completed the proof.
                \end{proof}

            \medskip

 In \cite{HanW4}, the authors  proved that for  any lattice $\Lambda$ formed by an upper triangular matrix exists a window $g$ such that ${\mathcal G}(g,\Lambda)$ is a Gabor orthonormal basis and such $g$ satisfies $\widehat{g} = \chi_K$, where $K$ is the common fundamental domain for the diagonal block matrices $A$ and $B^{-t}$.  However, their proof does not  provide any  constructive technique   for producing a compactly supported window. The proof of Theorem \ref{UT}    provides a technique to    construct a large class of examples of   sets  $K$ forming a Gabor orthonormal basis with respect to the lattice generated by upper triangular matrices. We explain this next.

\medskip

\begin{proposition}\label{converse of UT} Assume that the lattice $\Lambda$ and matrices $A$, $B$ and $D$ are given as in Theorem \ref{UT} and satisfying
 the hypotheses of the theorem. Then there is a set $K$ such that ${\mathcal G}(|K|^{-1/2}\chi_K, \Lambda)$ is a Gabor orthonormal basis for $L^2(\Bbb R^d)$. 
\end{proposition}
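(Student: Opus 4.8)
The plan is to produce $K$ explicitly by reversing the two reductions in the proof of Theorem~\ref{UT} and then invoking the \emph{sufficiency} direction of \cite[Lemma~4.1]{HanW4} instead of its converse. First I would normalize the window via Lemma~\ref{lemma 3-prim}. Since $B=A^{-t}$ and $\widetilde{D}:=A^{-1}D$ is rational and symmetric, the dilation ${\mathcal D}_{A^{-1}}$ relates ${\mathcal G}(|K|^{-1/2}\chi_K,\Lambda)$ to ${\mathcal G}(|A^{-1}K|^{-1/2}\chi_{A^{-1}K},\widetilde{\Lambda})$ with $\widetilde{\Lambda}=\left(\begin{array}{cc} I & \widetilde{D}\\ O & I\end{array}\right)(\Bbb Z^{2d})$, and since ${\mathcal D}_{A^{-1}}$ is unitary these two Gabor systems are orthonormal bases simultaneously. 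Thus it suffices to find a window set $K'$ for $\widetilde{\Lambda}$ and then put $K=AK'$.

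Next I would feed $\widetilde{\Lambda}$ through Lemma~\ref{a technical lemma} to obtain integer matrices $E$ and $X$ with $\widetilde{\Lambda}=\left(\begin{array}{cc} E^{-t} & O\\ X & E\end{array}\right)(\Bbb Z^{2d})$. The key observation is that in this lower triangular normal form the relevant time lattice $E^{-t}(\Bbb Z^d)$ and the relevant frequency lattice $B_{\mathrm{new}}^{-t}(\Bbb Z^d)=E^{-t}(\Bbb Z^d)$ \emph{coincide}, so a common fundamental domain for the two of them is nothing but an ordinary fundamental domain of the single lattice $E^{-t}(\Bbb Z^d)$. I would therefore take $K'=E^{-t}([0,1)^d)$. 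By the sufficiency part of \cite[Lemma~4.1]{HanW4}, applied with lower-left block $C=X$, the system ${\mathcal G}(|K'|^{-1/2}\chi_{K'},\widetilde{\Lambda})$ is a Gabor orthonormal basis; undoing the first reduction then shows that $K:=A\,E^{-t}([0,1)^d)$ is the desired window set for $\Lambda$.

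I do not expect a genuine obstacle: the content of the proposition is really that the reduction machinery of Theorem~\ref{UT} is reversible, and the one fact that must be checked---that the two coupled lattices in the lower triangular normal form are equal---is immediate from $A_{\mathrm{new}}=E^{-t}$ and $B_{\mathrm{new}}=E$. The only care needed is bookkeeping: tracking how ${\mathcal D}_{A^{-1}}$ and the unimodular change of variables in Lemma~\ref{a technical lemma} act on the window, so that the normalization $|\cdot|^{-1/2}$ is preserved at each step and the final $K$ is recorded correctly in terms of the original data $A$ and $E$.
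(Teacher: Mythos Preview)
Your proposal is correct and follows essentially the same route as the paper: reduce $\Lambda$ via Lemma~\ref{lemma 3-prim} to $\widetilde{\Lambda}=\left(\begin{smallmatrix}E^{-t}&O\\X&E\end{smallmatrix}\right)(\Bbb Z^{2d})$ using Lemma~\ref{a technical lemma}, note that the time lattice $E^{-t}(\Bbb Z^d)$ and the dual frequency lattice $E^{-t}(\Bbb Z^d)$ coincide, and take $K=AE^{-t}([0,1)^d)$. The only cosmetic difference is that the paper un-dilates once more before invoking the common-fundamental-domain criterion (arriving at the lattice $\left(\begin{smallmatrix}AE^{-t}&O\\A^{-t}X&A^{-t}E\end{smallmatrix}\right)(\Bbb Z^{2d})$ and then taking $K$ to be a fundamental domain of $AE^{-t}(\Bbb Z^d)$), whereas you apply \cite[Lemma~4.1]{HanW4} directly at the $\widetilde{\Lambda}$ level and pull back; both yield the same $K$.
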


 \begin{proof}
 Let $A$, $B$ and $D$ be given. Notice that 
  by Lemma \ref{lemma 3-prim} and the hypotheses of the proposition we know that for any given set $K$, the system ${\mathcal G}(|K|^{-1/2}\chi_K, \Lambda)$  is an orthonormal basis  if and only if ${\mathcal G}(|A^{-1}(K)|^{-1/2}\chi_{A^{-1}(K)}, \tilde \Lambda)$  is an orthonormal basis where $\tilde\Lambda= \left(\begin{array}{cc} I & A^{-1}D\\ O & I\end{array}\right)(\Bbb Z^{2d})$.  And, by Lemma \ref{a technical lemma} we also know that for $\tilde\Lambda$  there   are integer matrices  $X$ and $E$    such
  $$E(\Bbb Z^d) = \{n\in \Bbb Z^d: ~ A^{-1}Dn\in \Bbb Z^d\}$$ 
  and 
 $\widetilde\Lambda=    \left(\begin{array}{cc}  {E}^{-t} & O\\  X &   E \end{array}\right)(\Bbb Z^{2d})$.
  By appealing to Lemma \ref{lemma 3-prim} one more time,
  ${\mathcal G}(|K|^{-1/2}\chi_K, \Lambda)$ is an orthonormal basis if and only if
 $${\mathcal G}\left(|K|^{-1/2}\chi_{K},  \left(\begin{array}{cc} A{E}^{-t} & O\\   A^{-t} X & A^{-t} E \end{array}\right)(\Bbb Z^{2d})\right)$$
  is an orthonormal basis. Now take $K$ as a fundamental domain of  $AE^{-t}(\Bbb Z^d)$ and we are done. 
 \end{proof}

\medskip

The following gives a more  explicit example.

\begin{example}\label{D not I} Let $A = \left(
                                      \begin{array}{cc}
                                        \frac{1}{2} & 0\\
                                   0 & 2 \\
                                      \end{array}
                                    \right)$, $B = \left(
                                      \begin{array}{cc}
                                      2 & 0 \\
                                   0 &   \frac{1}{2} \\
                                      \end{array}
                                    \right)$. Take $D= \left(
                                      \begin{array}{cc}
                                       1 & 0\\
                                   0 &  \frac{1}{3} \\
                                      \end{array}
                                    \right)$.   Then    $A^{-1}D$  is   rational  and symmetric.  Let $\Gamma= \{n\in \Bbb Z^2: \ A^{-1}Dn\in \Bbb Z^2\}$.   $\Gamma$ is a full lattice and a simple  calculation shows that $\Gamma= E(\Bbb Z^2)$ where  $E=     \left(
                                   \begin{array}{cc}
                                1& 0\\
                              0 & 6 \\
                                       \end{array}
                                       \right)$.
                                      Now let $K$   be any   fundamental domain of the lattice $AE^{-t} ({\mathbb Z}^2)= \left(
                                        \begin{array}{cc}
                                1/2& 0\\
                              0 & 1/3 \\
                                       \end{array}
                                       \right)(\Bbb Z^2)$. Then by Proposition \ref{converse of UT}  the system
 $\mathcal G(|K|^{-1/2}\chi_K, \Lambda)$ is a Gabor orthonormal basis for $L^2(\Bbb R^2)$   for the lower triangular lattice $\Lambda$ with diagonal matrices  $AE^{-t}$ and $A^{-t}E$ and any matrix $C$.
                                       \end{example}

\iffalse
 \begin{example}\label{D=I} Let $A = \left(
                                      \begin{array}{cc}
                                        \frac{1}{2} & 0\\
                                   0 & 2 \\
                                      \end{array}
                                    \right)$, $B = \left(
                                      \begin{array}{cc}
                                      2 & 0 \\
                                   0 &   \frac{1}{2} \\
                                      \end{array}
                                    \right)$. Take $D=I$.  Then   $A^{-1}D=A^{-1}$ is  rational  and symmetric.  Let $\Gamma= \{n\in \Bbb Z^2: \ A^{-1}n\in \Bbb Z^2\}$.   $\Gamma$ is a full lattice and we can see  that $\Gamma= E(\Bbb Z^2)$ where  $E=     \left(
                                   \begin{array}{cc}
                                1& 0\\
                              0 & 2 \\
                                       \end{array}
                                  \right)$.
 Now  take $K=[0,1/2]\times [0,1]$. Then $K$ is  a fundamental domain for the lattice $AE^{-t} ({\mathbb Z}^2)$
and $\mathcal G(|K|^{-1/2}\chi_K, \Lambda)$ is a Gabor orthonormal basis for $L^2(\Bbb R^2)$ by Proposition \ref{converse of UT}.
 \end{example}
\fi

% {\bf Remark:} Notice the converse of Theorem \ref{UT}  also holds when $A^{-1}D$ is an integer matrix.
 \section{Examples}\label{Examples}
Consider $\Lambda=L(\Bbb Z^{2d})$ where $L= \left(\begin{array}{cc}
                                        A & O \\
                                        C & B \\
                                      \end{array}
                                    \right)$ is  rational.  In Theorem \ref{lower triangle} we proved that for a set $K$ if $\mathcal G(|K|^{-1/2}\chi_K, \Lambda)$ is an orthonormal basis for $L^2(\Bbb R^d)$ and    $A^tB$ is an integer matrix, then  $K$ is a fundamental domain of $A(\Bbb Z^d)$.  By Theorem \ref{Th_union of FD} this means $N=1$.  Thus  $K$ tiles by $A({\mathbb Z}^d)$ and  is  spectral with spectrum $B^{-t}(\mathbb Z^d)$.  In this section   we will provide examples showing that $N>1$ can also happen in Theorem \ref{Th_union of FD} if $A^tB$ is not an integer matrix and yet the system $\mathcal G(|K|^{-1/2}\chi_K, \Lambda)$ is an orthonormal basis for $L^2(\Bbb R^d)$.

                                       % common fundamental domain ---> window
                                       %  window .--> union of fundamental domains
                                       %  Question: union of fundamental domains ---> window?
                                       %
                                       %
                   %  \begin{enumerate}
% \item[(a)] If ${\mathcal G}(g,\Lambda)$ is a mutually orthogonal set of $L^2({\mathbb R}^d)$, then for any orthogonal packing region  $D$ for $g$,  $D+\Lambda$ is a packing of $\Bbb R^{2d}$.
 %\item[(b)]  Suppose that $f,g\in L^1({\mathbb R}^d)$ are non-negative functions such that $\int f(x) dx = \int g(x) dx= 1$. Suppose that for positive Borel measure $\mu$  on $\Bbb R^d$
%$$
%f\ast\mu \le 1  \ \mbox{and} \ g\ast\mu\le 1.
%$$
%Then $f\ast\mu = 1$  if and only if $ g\ast \mu= 1$.
 %\item[(c)] If ${\mathcal G}(g,\Lambda)$ is a mutually orthogonal set in $L^2({\mathbb R}^d)$ and $D$ is a tight orthogonal packing region for $g$. Then ${\mathcal G}(g,\Lambda)$ is complete  in $L^2(\Bbb R^d)$  if and only if  $D+\Lambda$ is a tiling for $\Bbb R^{2d}$.
%\end{enumerate}

                                      \medskip

                                    We are now ready to present our example of  a  set $K$ which is the union of fundamental domains of  lattice  $A(\Bbb Z^d)$ and  the union of fundamental domains of lattice  $B^{-t}(\Bbb Z^d)$,    $\chi_K$ is a  window function for a possible Gabor orthonormal basis, and $A^tB$ is {\it not an integer matrix}.

\begin{example}\label{mutli-tile K}

Let $K = [0,2]\times [0,1]$ and
$\Lambda = \left(
                                      \begin{array}{cccc}
                                        I& O \\
                                       C & B \\
                                      \end{array}
                                    \right)({\mathbb Z}^{4})$
where
                                    $B=
  \left(
                                     \begin{array}{cc}
                                        1/2 & 0  \\
                                         0&  2
                                      \end{array}
                                    \right)$ and $C=  \left(
                                     \begin{array}{cc}
                                        c_{11}& c_{12}  \\
                                         c_{21}&  c_{22}
                                      \end{array}
                                    \right)$.    The system $\mathcal G(|K|^{-1/2}\chi_K, \Lambda)$ is a Gabor orthonormal basis  if
                         $c_{21}$ is  an odd number.
                                   \end{example}

                                   \begin{proof}
 Observe  that
\begin{align}\notag
[0,2]\times [0,1] &= [0,1]^2 + \{(0,0), (1,0)\} \\\notag
 &= [0,2]\times[0,1/2]+ \{(0,0), (0,1/2)\}.
 \end{align}

% where $ [0,1]^2 $ and $ ([0,1]^2+(1,0)^t) $ are fundamental domains of $\Bbb Z^2$ and $[0,2]\times[0,1/2]$ and  $([0,2]\times[0,1/2])+(0,1/2)^t$ are fundamental domains of $B^{-t}(\Bbb Z^2)$.
This shows that   $K$ is   union of two fundamental domains of $\Bbb Z^2$ and union of two fundamental domains of $B^{-t}(\Bbb Z^2)$, respectively.

  Let $({m}, n) \in {\mathbb Z}^4$ with ${m},{ n}  \in{\mathbb Z}^2$ and $({m},{ n})\neq (0,0)$.  Put
\begin{align}\notag
\mathcal I: = &\int \chi_{K}(x-{m})\chi_{K}(x) e^{-2\pi i \langle C({m}), x\rangle} e^{-2\pi i \langle B({n}), x\rangle}dx\\\notag
=&\int \chi_{(K+{\bf m})\cap K}(x) e^{-2\pi i \langle C({ m}), x\rangle} e^{-2\pi i \langle B({n}), x\rangle}dx .
\end{align}
 Mutual orthogonality of the Gabor system will follow if we can show that $\mathcal I=0$ for any $( m, n)\neq (0,0)$. Note that
$$
(K+m)\cap K \simeq \left\{
                                         \begin{array}{ll}
                                           K, & \hbox{${m} = 0$;} \\
                                                  {[0,1]}^2, & \hbox{${m} = (\pm 1,0)$;} \\
                                           \emptyset , & \hbox{otherwise.}
                                         \end{array}
                                       \right.
$$

(Here, we write $A\simeq B$ if $A = B+k$ for some $k\in{\mathbb Z}^d$.)

  Since $K$ is a union of fundamental domains for $B^{-t}(\Bbb Z^2)$, then  if ${m}=0$, we automatically have   $\mathcal I=0. $

                                   If ${ m}= (\pm 1,0)$,  for any ${ n} = (n_{1},n_{2})$, $\mathcal I$ is equal to the following integral up to a unimodular constant:
\begin{equation}\label{eq6.3}
\begin{aligned}
\int_{[0,1]^2} e^{-2\pi i \langle C((\pm 1, 0)^t), x\rangle}  e^{-2\pi i \langle B({n}), x\rangle}dx=&\int_{[0,1]}\int_{[0,1]} e^{-2\pi i (\pm c_{11}x_1\pm c_{21}x_2)}  e^{-2\pi i (\frac{1}{2}n_{1}x_1+2n_{2}x_2)}dx_1dx_2 \\
=& \widehat{\chi_{[0,1]}}(\pm c_{11}+\frac{1}{2} n_1) \widehat{\chi_{[0,1]}}(\pm c_{21}+2 n_2).
\end{aligned}
\end{equation}
                             It is obvious that the last line   equals to zero for all $(n_1, n_2)$ only  if  $c_{21}$ is an odd number. For other cases of $m$ it is trivial that $\mathcal I=0$. Thus the orthogonality is obtained and
                                the completeness
                              is a direct conclusion of  Proposition \ref{complt}.
\end{proof}

We notice that the previous example exploited the fact that $B({\mathbb Z}^2)$ is exponentially incomplete  for $L^2[0,1]^2$ in (\ref{eq6.3}). The following   example  illustrates a case where, contrary to the previous example,    the  finite union of fundamental domains  cannot form  a Gabor orthonormal basis for any choice of matrix  $C$.

\medskip

                  \begin{example}\label{example2}
                  Let $K=[0,2]^2$ and let  $B$ be the matrix as in Example \ref{mutli-tile K}. Then $K$ is   union of $4$ fundamental domains of  the  lattice  $\Bbb Z^2$ and union of $4$ fundamental domains of  $B^{-t}(\Bbb Z^2)$. However,  there is  no  matrix $C$ for which $\chi_K$ is a   window function yielding an orthonormal basis for the lower triangular  lattice $\Lambda=\left(\begin{array}{cc} I & O\\ C & B\end{array}\right)(\Bbb Z^{4})$.    \end{example}

\begin{proof}
The fact that $K$  is union of  fundamental domains of $\Bbb Z^d$ and $B^{-t}(\Bbb Z^d)$ is straight forward. In short,
$$
\begin{aligned}
K =& [0,1]^2  + \{(0,0), (1,0), (0,1), (1,1)\}\\
  =& [0,2]\times [0,1/2] + \{(0,0), (0,1/2), (0,1), (0,3/2)\}.
\end{aligned}
$$
Suppose that there exists a matrix $C$ such that $\chi_K$ is a   window function for the lower triangular  lattice $\Lambda=\left(\begin{array}{cc} I & O\\ C & B\end{array}\right)(\Bbb Z^{4})$. Thus,    for any $(0,0)\neq (m, n)\in \Bbb Z^{4}$ we must have
$$
 0 = {\mathcal I}:  =\int \chi_{(K+{m})\cap K}(x) e^{-2\pi i \langle C({ m}), x\rangle} e^{-2\pi i \langle B({n}), x\rangle}dx.
$$
From the other side, the only non-empty intersection sets $(K+ m)\cap K$ are
$$
(K+m)\cap K \simeq \left\{
  \begin{array}{ll}
    K, & \hbox{if }{m}=0 \\
    {[0,1]\times[0,2]}, & \hbox{if } {m} = (\pm1,0) \\
    {[0,2]\times[0,1]}, & \hbox{if }  m = (0,\pm1) \\
    {[0,1]}^2, & \hbox{if } m = (\pm1,\pm1).
  \end{array}
\right.
$$
From the last three intersections, we obtain that  if ${\mathcal I}=0$, then  the following three equations must hold for all integer vectors $(n_1,n_2)$, respectively:

  \begin{align}\notag
    \widehat{\chi_{[0,1]}}(\pm c_{11}+\frac{1}{2} n_1) \widehat{\chi_{[0,2]}}(\pm c_{21}+2 n_2) &=0  \\\notag
         \widehat{\chi_{[0,2]}}(\pm c_{12}+\frac{1}{2}n_1) \widehat{\chi_{[0,1]}}(\pm c_{22}+2 n_2)
         &=0\\\notag
       \widehat{\chi_{[0,1]}}(\pm c_{11}\pm c_{12}+ \frac{1}{2} n_1) \widehat{\chi_{[0,1]}}(\pm c_{21}\pm c_{22}+2 n_2) &=0
 \end{align}

\medskip

We claim that if   the first equation holds, then $\widehat{\chi_{[0,2]}}(\pm c_{21}+2 n_2) =0$ for all integers $n_2$ and $c_{21}\in 2{\mathbb Z}+\{\frac{1}{2} ,1,\frac{3}{2} \}$. % But this is straight forward since the zero set for $\widehat{\chi_{[0,2]}}$ is contained in $1/2\Bbb Z$.

\medskip

To justify the claim, suppose that there exists an integer $n_2$ such that $\widehat{\chi_{[0,2]}}(\pm c_{21}+2 n_2) \ne0$. Then $\widehat{\chi_{[0,1]}}(\pm c_{11}+\frac{1}{2} n_1)=0$ for all integers $n_1$. However, this would imply the existence of an exponentials $e^{-2\pi i c_{11} x}$ such that it is orthogonal to all $\{e^{2\pi i (\frac{1}{2}  n) x}: n\in{\mathbb Z}\}$ in $L^2[0,1]$. This is impossible since the exponentials set $\{e^{2\pi i (\frac{1}{2}  n) x}: n\in{\mathbb Z}\}$ is complete in $L^2[0,1]$. Hence, we have only $\widehat{\chi_{[0,2]}}(\pm c_{21}+2 n_2) =0$ for all integers $n_2$. Finally, since the zero set for $\widehat{\chi_{[0,2]}}$ is $\frac{1}{2} {\mathbb Z}$ except zero, then we must have  $c_{21}\in 2{\mathbb Z}+\{\frac{1}{2} ,1, \frac{3}{2} \}$, as desired.

\medskip

Similarly, the second equation  implies that $\widehat{\chi_{[0,1]}}(\pm c_{22}+2 n_2) =0$ for all integers $n_2$ and thus $c_{22}\in 2{\mathbb Z}+1$ must be an odd integer.

\medskip

The third equation implies that
\begin{equation}\label{eq5.4}
\widehat{\chi_{[0,1]}}(\pm c_{21}\pm c_{22}+2 n_2) =0
\end{equation}
for any integers $n_2$. Now, we write $c_{21} = 2k_1+ \frac{1}{2} j$, for some $j = 1,2,3$ and $c_{22} = 2k_2+1$. Then $\pm c_{21}\pm c_{22}\in 2{\mathbb Z}+ \{\frac{3}{2},2,\frac{5}{2}\}$. In the case of fraction, (\ref{eq5.4}) cannot be zero. If $\pm c_{21}\pm c_{22} = 2m+2$, we take $n_2 = -m-1$. Then  (\ref{eq5.4}) will  imply $\widehat{\chi_{[0,1]}}(0) = 1$, which  is impossible.  Thus, the third equation can never be zero. This implies that  such $C$ does not exist.
\end{proof}

\medskip
The following example  proves that the boundedness property for the set $K$ is necessary in Proposition \ref{prop2}. 

                  \begin{example}\label{example3} Let $I_0 = [0,1)$ and $I_n = \left[1-\frac{1}{2^{n-1}},1-\frac{1}{2^{n}}\right)$ for $n\ge1$. Define 
                  $$
                  K = \bigcup_{k\in{\mathbb Z}} (k+ I_{|k|}) 
                  $$
                  The set $K$ is unbounded and we have the following. 
                  
\begin{enumerate}\item $K$ multi-tiles ${\mathbb R}$ by ${\mathbb Z}$ at level 3. However, for any $m\ne 0$, $K\cap( K+m)$ is not a packing of ${\mathbb R}$. Therefore, Proposition \ref{prop2} does not hold if $K$ is unbounded.
 \item Nonetheless, $K$ cannot form a Gabor orthonormal basis using lattices of the form $
  \left(
                                     \begin{array}{cc}
                                        1 & 0  \\
                                         c&  1
                                      \end{array}
                                    \right) ({\mathbb Z}^2)$.  \end{enumerate}
\end{example}

\begin{proof}
The fact that $K$ mult-tiles ${\mathbb R}$ by ${\mathbb Z}$ at level 3 follows from a direct observation, so  we omit the details here. We note that for any $m\ne 0$,
$$
K\cap (K+m)\supset I_{|m|} \cup (I_{|m|}+m).
$$
Hence, for all $x\in I_{|m|}$, $\sum_{n\in{\mathbb Z}} \chi_{K\cap(K+m)}(x+n)\ge 2$. Therefore, it is never a packing for any $m\ne 0$. To show  the last statement, notice 

$$
K\cap (K+1) = \left[0,\frac12\right)\cup\left[1,\frac32\right), \ K\cap (K+2) = \left[\frac12,\frac34\right)\cup\left[1,\frac32\right)\cup\left[\frac52,\frac{11}{4}\right).
$$
Suppose that $K$ forms a Gabor orthonormal basis using some lattice of the form $
  \left(
                                     \begin{array}{cc}
                                        1 & 0  \\
                                         c&  1
                                      \end{array}
                                    \right) ({\mathbb Z}^2)$. Then $\widehat{\chi_{K\cap(K+1)}}(c+m) = 0$ and $\widehat{\chi_{K\cap(K+2)}}(c+m) = 0$ for all $m\in{\mathbb Z}$. In particular, 
 $$
 \widehat{\chi_{K\cap(K+1)}} (c+m) = (1+e^{2\pi i (c+m)})\widehat{\chi_{[0,1/2]}}(c+m) =0, \forall m\in{\mathbb Z}.
                                    $$
We see that the only possibility of the above is that $c\in\frac12+{\mathbb Z}$. We now consider 
$$
\widehat{\chi_{K\cap(K+2)}} (\xi) = e^{2\pi i \frac12\xi}\left(1+e^{2\pi i \frac12\xi}+e^{2\pi i \frac34\xi}+e^{2\pi i \frac2\xi}\right)\widehat{\chi_{[0,1/4)}}(\xi).
$$
If $c \in\frac12+{\mathbb Z}$ and $\widehat{\chi_{K\cap(K+2)}} (c) =0$, we must have 
$$
0=1+e^{2\pi i \frac12c}+e^{2\pi i \frac34c}+e^{2\pi i 2c} = 2+e^{2\pi i \frac12c}+e^{2\pi i \frac34c}.
$$ 
This forces $e^{2\pi i \frac12c} = e^{2\pi i \frac34c}=-1$. Thus, 
$$c\in 2(1/2+{\mathbb Z})\cap \frac43(1/2+{\mathbb Z}) = (1+2{\mathbb Z})\cap \left(\frac23+\frac43{\mathbb Z}\right)\subset 1+2{\mathbb Z}$$
This is a contradiction since $c\in\frac12+{\mathbb Z}$ is never an  integer. This completes the proof.
 \end{proof}

\section{Discussions and Open problems}\label{Open problems}

This paper investigates the Fuglede-Gabor Problem \ref{our conjecture1} over the lattices. We believe that this  problem should be true for all lattices. We solved the problem completely in dimension one when the lattice  is rational and in higher dimensions when the lattice is integer.   In what follows we shall explain how to resolve Problem \ref{our conjecture1}  in full generality for any  lattices  $\Lambda = \left(
                                      \begin{array}{cc}
                                       A  & D \\
                                        C & B \\
                                      \end{array}
                                    \right)({\mathbb Z}^{2d})$. In fact,
 it is sufficient  to solve the following two cases.
\begin{enumerate}
\item[(1)]\label{item 1} {\bf Rational case:} After converting a rational  lattice into a lower triangular rational matrix by Corollary \ref{M=N} and   reducing the  matrix   where $A=I$  by    Lemma \ref{lemma 3-prim},   $\Lambda$ is a lattice of the form of   $  \left(
                                      \begin{array}{cc}
                                        I  & O \\
                                        C & B \\
                                      \end{array}
                                    \right)({\mathbb Z}^{2d})$, where $B$ and $C$ are $d\times d$ rational matrices but  $B$ is not necessarily an integral matrix.

                                    \medskip

\item[(2)] {\bf Irrational case:}  After a reduction process  by Lemma \ref{lemma 3-prim}, $\Lambda$ is a lattice of the form of  
$$\left(
                    \begin{array}{cc}
                       I & D\\
                       C & B  \\
                    \end{array}
                  \right) ({\mathbb Z}^{2d})$$  which   contains  irrational entries.
\end{enumerate}

\medskip
In what follows, we shall discuss these two cases in more details.

\subsection{Rational Case}
Let  $\Lambda= \left(
\begin{array}{cc}
A & O\\
C & B
\end{array}\right)(\Bbb Z^{2d})$ be a lower triangular rational matrix.
 Example \ref{mutli-tile K}  tells us that when $A^tB$ is  a non-integer matrix, $K$ is  not a  fundamental domain for the lattices  $A(\Bbb Z^d)$ and $B^{-t}(\Bbb Z^d)$ but the union of their fundamental domains. However, in Example \ref{example2} we see that there exists no $C$ such that $K$, as the  union of fundamental domains, is a set whose characteristic function generates a  Gabor orthonormal basis associated to the given matrices $A$ and $B$. We predict that this failure is due to the  number of decompositions of $K$ into fundamental domains of $B^{-t}(\Bbb Z^d)$.
 In this concern and in relation to  the examples illustrated  in Section \ref{Examples},
  we conjecture the following problem  for the lattices
$\Lambda= \left(
                    \begin{array}{cc}
                       I & D\\
                       C & B  \\
                    \end{array}
                  \right) ({\mathbb Z}^{2d})$.
 \medskip

  \medskip

  \begin{conjecture}\label{our conjecture}   Let $K\subset \Bbb R^d$ and $B$ be a  rational matrix with  $\det(B)=1$. Let
  $s$ be  the least common multiple of the denominators of the  matrix entries  $(b_{ij})$.
 There  is a matrix $C$ such that $\mathcal G(|K|^{-1/2}\chi_K, \Lambda)$ is a  Gabor orthonormal  basis  for $L^2(\Bbb R^d)$  if and only if $K$ tiles   and
  $$K= \bigcup_{i=1}^sE_i= \bigcup_{i=1}^sF_i $$
  where $E_i$  and $F_i$ are
   fundamental  domains  of $\Bbb Z^d$   and  $B^{-t}(\Bbb Z^d)$, respectively.
 \end{conjecture}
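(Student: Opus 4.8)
The plan is to prove the two implications separately, reducing both to an analysis of the overlap sets $\Omega_m := (K+m)\cap K$ for $m\in\mathbb{Z}^d$ and to the notion of exponential completeness of $B(\mathbb{Z}^d)$. As in the computation of Example \ref{mutli-tile K}, with $\Lambda=\left(\begin{smallmatrix} I & O \\ C & B\end{smallmatrix}\right)(\mathbb{Z}^{2d})$ the mutual orthogonality of $\mathcal{G}(|K|^{-1/2}\chi_K,\Lambda)$ is equivalent to the family of identities $\widehat{\chi_{\Omega_m}}(Cm+Bn)=0$ for all $n\in\mathbb{Z}^d$ and all $m\in\mathbb{Z}^d$ with $(m,n)\neq(0,0)$; completeness is then automatic by Proposition \ref{complt}. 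The case $m=0$ says exactly that the exponentials $\{e^{2\pi i\langle Bn,x\rangle}\}$ are orthogonal on $L^2(K)$, which by Theorem \ref{thJoPeLgWa} (with $\Gamma=B(\mathbb{Z}^d)$, so $\Gamma^\perp=B^{-t}(\mathbb{Z}^d)$) means $K$ is a finite union of fundamental domains of $B^{-t}(\mathbb{Z}^d)$; the Ron--Shen duality of Theorem \ref{thRS} gives the analogous statement for $\mathbb{Z}^d$, so the Key Lemma \ref{Th_union of FD} already forces $K=\bigcup_{i=1}^N E_i=\bigcup_{i=1}^N F_i$ with a common multiplicity $N=|K|$. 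The entire content of the conjecture is therefore to pin the value $N=s$, to deduce that $K$ tiles, and --- for the existence direction --- to realize the nonzero conditions by a single \emph{linear} map $C$.

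\textbf{Sufficiency.} Assume $K$ tiles and $N=s$. For each $m\neq0$ with $|\Omega_m|>0$ I would first establish that $B(\mathbb{Z}^d)$ is exponentially incomplete on $L^2(\Omega_m)$, i.e. (Definition \ref{exponential completeness}) that there is a vector $\xi_m$ with $\widehat{\chi_{\Omega_m}}(Bn-\xi_m)=0$ for every $n$; the hypothesis $N=s$ together with $\det B=1$ and $sB\in M_d(\mathbb{Z})$ is what should guarantee that $\Omega_m$ never packs $B^{-t}(\mathbb{Z}^d)$ (so that by Lemma \ref{lemma 4} the exponentials are incomplete) and, more strongly, that the incompleteness is witnessed by an exponential. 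The decisive step is then to select the $\xi_m$ inside their admissible $B(\mathbb{Z}^d)$-cosets so that $m\mapsto -\xi_m$ extends to a linear map $C$ with $Cm\equiv-\xi_m \pmod{B(\mathbb{Z}^d)}$ for all $m$ simultaneously: since $C$ is determined by its values on $e_1,\dots,e_d$ and then on all of $\mathbb{Z}^d$ by linearity, this is a compatibility (Chinese-remainder-type) condition among the overlaps. Once such a $C$ is produced, Proposition \ref{complt} upgrades orthogonality to an orthonormal basis.

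\textbf{Necessity.} Assume $\mathcal{G}(|K|^{-1/2}\chi_K,\Lambda)$ is an orthonormal basis for some $C$. The Key Lemma gives the common decomposition $K=\bigcup_{i=1}^N E_i=\bigcup_{i=1}^N F_i$ and multi-tiling of $\mathbb{R}^d$ at level $N$; I would then argue that $K$ tiles by exhibiting an honest tiling set (as $[0,2]\times[0,1]$ tiles by $2\mathbb{Z}\times\mathbb{Z}$ at level one while multi-tiling $\mathbb{Z}^2$ at level two). To force $N=s$ I would argue by contradiction on both sides. If $N>s$, the overlaps become too fat: exactly as in Example \ref{example2} one locates an $m\neq0$ for which $\Omega_m$ packs $B^{-t}(\mathbb{Z}^d)$, whence by Lemma \ref{lemma 4} and Remark \ref{remark2} the exponentials $B(\mathbb{Z}^d)$ are (exponentially) complete on $L^2(\Omega_m)$, contradicting $\widehat{\chi_{\Omega_m}}(Cm+Bn)=0$. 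If $N<s$, then $K$ cannot be assembled from only $N$ fundamental domains of $B^{-t}(\mathbb{Z}^d)$ compatibly with the integral scale $sB$, a deficiency that should be read off from the commensurability index $[\mathbb{Z}^d:\mathbb{Z}^d\cap B^{-t}(\mathbb{Z}^d)]$, which in the model cases equals $s$.

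\textbf{Main obstacle.} The hardest points --- and the reason the statement is left as a conjecture --- are twofold. On the existence side, proving exponential incompleteness of $B(\mathbb{Z}^d)$ on each overlap is already delicate because exponential incompleteness is strictly weaker than non-packing (cf. Remark \ref{remark2}), and even granting it, assembling the coset choices $\xi_m$ into one \emph{linear} matrix $C$ is a genuine obstruction that can fail outright, which is precisely the phenomenon exhibited by Example \ref{example2}. On the necessity side, extracting the rigid equality $N=s$ from spectral and tiling data alone requires controlling whether an arbitrary (possibly non-convex) overlap $\Omega_m$ packs $B^{-t}(\mathbb{Z}^d)$ in dimension $d>1$, which is exactly the type of combinatorial tiling question for which no general method is known.
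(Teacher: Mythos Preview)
This statement is a \emph{conjecture} in the paper (Conjecture~\ref{our conjecture}); the paper does not prove it and only records that the case $s=1$ follows from Theorem~\ref{lower triangle}. There is therefore no paper proof to compare against, and your write-up is, appropriately, a strategy outline that closes by naming the obstacles rather than resolving them.

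Your framework matches the paper's own discussion after the conjecture: the Key Lemma fixes a common multiplicity $N$, orthogonality is the system $\widehat{\chi_{\Omega_m}}(Cm+Bn)=0$, and the existence direction hinges on exponential incompleteness of $B({\mathbb Z}^d)$ on each overlap together with the linear compatibility of the witnesses $\xi_m$. But several of your ``I would then argue'' steps are not arguments. In the necessity direction, the assertion that $K$ must tile is exactly the open Fuglede--Gabor Problem~\ref{our conjecture1} for this class of lattices; citing the single example $[0,2]\times[0,1]$ says nothing about a general $K$. Your mechanism for ruling out $N>s$ misreads Example~\ref{example2}: that example derives a contradiction from the incompatibility of the scalar constraints on the entries $c_{ij}$, not from any overlap packing $B^{-t}({\mathbb Z}^d)$; the only packing-to-completeness argument in the paper (Proposition~\ref{prop2} with Lemma~\ref{lemma 4}) works precisely because $B=I$ there, and there is no reason the relevant overlap should pack $B^{-t}({\mathbb Z}^d)$ when $B$ is non-integral. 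The $N<s$ step via a commensurability index is purely heuristic. In short, your outline correctly isolates where the difficulty lies but does not move past it --- which is consistent with the paper leaving the statement open.
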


It is obvious that the conjecture automatically holds when $B$ is an  integer matrix which means  $s=1$.  Indeed,  this is  the result of Theorem \ref{lower triangle}.

%                                    \begin{problem} Assume that $\mathcal G(|K|^{-1/2}\chi_K, \Lambda)$ is a Gabor orthonormal basis with respect to lattice $\Lambda$. If $K$ does not tile, then
%                                    there exists a
%                                      non-zero $m\in \Bbb Z^d$ such that $K\cap K+m$ is packing for the lattice $B^{-t}(\Bbb Z^d)$.
%                                     \end{problem}
%
%                                      Given that the answer to  the problem is positive and if  $K$ does not tile there exists  such $m\neq 0$, then Lemma \ref{lemma 4} will conclude  that the set $\{e^{2\pi i \langle B(n), x\rangle}: \ n\in \Bbb Z^d\}$ is complete in $L^2(K\cap K+m)$.
%                                      From the other side, by the orthogonality condition we have
%                                      $$\int_{K\cap K+m} e^{2\pi i \langle C(m),x\rangle} e^{2\pi i \langle B(n), x\rangle} dx = 0 \quad \forall \ n\in\Bbb Z^d .$$
%                                      Since $C$ is a non-zero matrix,   the completeness of the exponentials $\{e^{2\pi i \langle B(n), x\rangle}\}$ implies that  $e^{2\pi i \langle C(m),x\rangle} = 0$, for a.e. $x$, which is impossible.
%

 \medskip

 Observe that if  $K$ is as  in Conjecture \ref{our conjecture},   then for any non-zero $n\in \Bbb Z^d$ we have $\int_K e^{2\pi i \langle Bn, x\rangle} dx=0$. And, there are only finitely many $m\in \Bbb Z^d$ such that $|K\cap (K+m)|\neq 0$, as $K$ is a finite union of fundamental domains of $\Bbb Z^d$.  To prove the orthogonality,  one must first show the existence of  a matrix $C$ such that for  all   $n\in \Bbb Z^d$
\begin{equation}\label{eq6}
\int_{K\cap K+m}  e^{-2\pi i \langle Cm, x\rangle}  e^{-2\pi i \langle Bn, x\rangle}  dx=0,
\end{equation}
which is equivalent to the exponential incompleteness of the lattices $B(\Bbb Z^d)$ for $L^2(K\cap K+m)$. It appears that we need to study the exponential completeness of the lattices over different domains.  In fact, the following problem has not yet had a definite answer.
  
  \medskip

\begin{problem} Given a set $K$ with positive and finite measure,  classify all invertible $d\times d$ matrices $B$ for which 
 the exponents $\{ e^{-2\pi i \langle Bn, x\rangle} :  n\in \Bbb Z^d\}$ are  exponentially complete in   $L^2(K)$. 
\end{problem}

\medskip

Proposition \ref{prop_G} provides a sufficient condition for matrices $B$  when $K=[0,1]^d$. Unfortunately, the converse of the proposition does not hold in dimensions $d=2$ and higher, as the counterexample following Proposition \ref{prop_G} shows. However,  in Proposition \ref{dimension one} we obtain a full characterization of exponential completeness for $[0,1]$ in dimension $d=1$ for integer lattices in $\Bbb Z$.

\medskip 

As mentioned earlier, the exponential completeness of a set does not imply the completeness of the set in general. For a recent developments in study of 
 the completeness, frame and Riesz bases properties of  exponentials we refer the reader to the paper by  De Carli and her co-authors \cite{De Carli}.

%Similar problems can also be asked for exponential completeness replaced by completeness, frame or Riesz basis. We are glad that De Carli  et al recently solved it \cite{De Carli}.   

%
%\begin{problem}
%For what lattice $B(\Bbb Z^d)$ it will be complete/incomplete/ONB for $L^2[0,1]^d$? Classify it.
 %\end{problem}

\subsection{Irrational Cases} The case for irrational lattices is more challenging and complicated. It appears that the lower and upper triangular case is asymmetric.  We have seen that in Theorem \ref{lower triangle}, the lower triangular block matrix $C$ is not involved in the statement.  Thus, irrational entries are allowed for lower triangular matrices. On the other hand, Han and Wang \cite{HanW4} implicitly conjectured the following problem in their paper  \cite{HanW1}:

\medskip

 {\bf Han and Wang\rq{}s Conjecture:} {\it  Let $\Lambda = \left(
                                      \begin{array}{cc}
                                        1 & \alpha \\
                                        0 & 1 \\
                                      \end{array}
                                    \right)({\mathbb Z}^{2})$ where $\alpha$ is irrational. Then there doesn't exist compactly supported window $g$ such that ${\mathcal G}(g,\Lambda)$ forms a Gabor orthonormal basis for $L^2({\mathbb R})$.}

 \medskip

 Observe that if $\Lambda=  \left(
                                      \begin{array}{cc}
                                     1& \alpha \\
                                   0 &   1 \\
                                      \end{array}
                                    \right)$, with $\alpha$ irrational, our method applied to construct Example \ref{D not I}  would not work for the existence of $K$ since in that case $\Gamma=\{0\}$. This observation predicts that  Han and Wang Conjecture  \cite{HanW1}  might be true, although we do not have a proof for it now.  However, a simple calculation shows that the function $\chi_{[0,1]}$ can not be a window function for this  lattice. 

                                    \medskip

  %In \cite{L01} and \cite{L02}, \L aba established the conjecture for the union of two intervals on the real line and provided a series of
%%connections between the study of orthonormal bases and interesting problems in algebraic number theory.
%  Yet the conjecture may still be true in several important special cases.
%  In spite of  the disproof of its general validity, the conjecture has  still generated many interesting results.  Most recently, it was proved by the second listed author and collaborators that the Fuglede Conjecture holds in some abelian finite groups (\cite{IMP}). For the link between tiling property and existence of general bases in locally compact abelian groups see \cite{BHM16}.

\subsection{Full generality of Fuglede-Gabor Problem \ref{our conjecture1}} 
It is known that non-symmetric convex bodies   as well as  convex sets with a point of non-vanishing Gaussian curvature    have no basis of exponentials and yet they do not tile \cite{K99,IKT01}. Recently, similar results were also   proved for Gabor bases.  Indeed, the authors in \cite{IM17} proved that in dimensions $d\neq1$    (mod 4),  convex sets with a point of non-vanishing Gaussian curvature cannot generate any Gabor orthonormal basis with respect to any countable time-frequency set $\Lambda$. Also, the authors in \cite{CL} proved that non-symmetric convex polytopes do not produce any Gabor orthonormal basis with respect to any  countable time-frequency set. However, the result for non-symmetric  convex domains is not known yet.  The existent  results predict that  Fuglede-Gabor problem will  still be true to some  extent.

\medskip

%{\color{blue} I think we should keep it. People know (or even believe) FG problem can be wrong. Some people may think the counter-example can be obtained trivially. I want to emphasize here the delicacy to prove or disprove it. This paragraph says using the counterexample of Fuglede conjecture cannot produce an immediate example. }

%{\color{red}  (I would remove this paragraph, since we are telling that F-G problem can be wrong though. Also a contradiction to the following paragraph as well as the following example.)}
 One may notice that there are also 
  examples of  spectral sets which do not tile by translations (see e.g.
 \cite{T04}, \cite{Matolcsi} and \cite{KM06}). Therefore, by a result of Dutkay and the first listed author \cite{Dutkay-Lai}, it is known that the indicator function of these sets can not serve as    Gabor orthonormal window with resepct to  any separable time-frequency set. However, they may still produce a  Gabor orthonormal basis using some non-separable and countable  time-frequency sets. 
  This   requires some input of new ideas.

\medskip

Finally, it is known that octagon does not tile ${\mathbb R}^2$ by translations, but it is a multi-tile by ${\mathbb Z}^2$. The following example tells us that it does not form Gabor orthonormal basis using any lattices, confirming that the Fuglede-Gabor problem  holds up to some extent.

\medskip

                                      \begin{example}\label{Octagon} Let ${\mathcal O}_8\subset \Bbb R^2$ be the octagon symmetrically centred at the origin with integer vertices $\{(\pm 1, \pm 2), (\pm 2, \pm 1)\}$. ${\mathcal O}_8$ multi-tiles $\Bbb R^2$ with $\Bbb Z^2$  and it is the  union of  $s=14$ fundamental domains of $\Bbb Z^2$.  Yet there doesn't exist any $\Lambda=   \left(
                                      \begin{array}{cc}
                                        I & O \\
                                        C & B \\
                                      \end{array}\right) (\Bbb Z^4)$ with rational matrix $B$  for which   $\mathcal G(|{\mathcal O}_8|^{-1/2}\chi_{{\mathcal O}_8}, \Lambda)$ forms an  orthonormal basis for $L^2(\Bbb R^2)$ .
                                      \end{example}

                                      The proof of this example is involved and so it will be provided in the next section.

\section{Gabor orthonormal bases on Octagon and Proof of Example \ref{Octagon}}\label{Appendix Octagon}

In this appendix, we will prove that the octagon symmetrically centered at the origin with integer vertices $\{(\pm 1, \pm 2), (\pm 2, \pm 1)\}$ cannot admit any Gabor orthonormal basis  with  $\Lambda=   \left(
                                      \begin{array}{cc}
                                        I & O \\
                                        C & B \\
                                      \end{array}\right) (\Bbb Z^4)$, where  $B$ is  a rational matrix. For this we need the following lemma.

\begin{lemma}\label{lemma_shear}
Let $M = \left(
                                      \begin{array}{cc}
                                        \alpha & 0 \\
                                        0 & 1/\alpha \\
                                      \end{array}\right)$ and $M_{\beta} = \left(
                                      \begin{array}{cc}
                                        \alpha & 0 \\
                                        \beta & 1/\alpha \\
                                      \end{array}\right)$, $\alpha\neq 0$. Then $\Omega$ is a multi-tile by $M_{\beta}({\mathbb Z}^2)$ if and only if  $\Omega$ is a multi-tile by $M({\mathbb Z}^2)$.
\end{lemma}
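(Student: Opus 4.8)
The plan is to convert the geometric multi-tiling condition into a Fourier-analytic vanishing condition on the dual lattice and then to compare the two dual lattices directly. Recall (Theorem \ref{thJoPeLgWa}, together with its elementary converse via Poisson summation) that for a full-rank lattice $\Gamma\subset{\mathbb R}^2$, a set $\Omega$ of positive finite measure multi-tiles ${\mathbb R}^2$ by $\Gamma$ if and only if $\widehat{\chi_\Omega}$ vanishes on $\Gamma^{\perp}\setminus\{0\}$. Since the dual lattices of $M({\mathbb Z}^2)$ and $M_\beta({\mathbb Z}^2)$ are $M^{-t}({\mathbb Z}^2)$ and $M_\beta^{-t}({\mathbb Z}^2)$, the lemma is reduced to showing that $\widehat{\chi_\Omega}$ vanishes off the origin on $M^{-t}({\mathbb Z}^2)$ precisely when it does so on $M_\beta^{-t}({\mathbb Z}^2)$.

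First I would compute the two dual lattices explicitly. Because $M$ is diagonal, $M^{-t}({\mathbb Z}^2)=\tfrac1\alpha{\mathbb Z}\times\alpha{\mathbb Z}$, while a short computation shows that $M_\beta^{-t}$ has rows $(1/\alpha,-\beta)$ and $(0,\alpha)$, so that $M_\beta^{-t}({\mathbb Z}^2)=\{(k_1/\alpha-\beta k_2,\ \alpha k_2):k_1,k_2\in{\mathbb Z}\}$. The key observation is that the two dual lattices share the horizontal sublattice $\tfrac1\alpha{\mathbb Z}\times\{0\}$ and have the same projection $\alpha{\mathbb Z}$ onto the second coordinate; on the slice at height $\alpha q$ the second lattice is exactly the first translated horizontally by $-\beta q$. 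Thus everything comes down to comparing, for each fixed $q$, the vanishing of $\widehat{\chi_\Omega}(\cdot,\alpha q)$ on $\tfrac1\alpha{\mathbb Z}$ with its vanishing on the shifted set $\tfrac1\alpha{\mathbb Z}-\beta q$.

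To exploit the common structure I would fold $\Omega$ by the shared vertical sublattice $\{0\}\times\tfrac1\alpha{\mathbb Z}$, passing to the cylinder ${\mathbb R}\times{\mathbb T}$ with ${\mathbb T}={\mathbb R}/\tfrac1\alpha{\mathbb Z}$, and set $g(x,\theta)=\sum_{n}\chi_\Omega(x,\theta+n/\alpha)$, a bounded integer-valued function (finiteness of the sum uses boundedness of $\Omega$). Under this folding, multi-tiling by $M$ becomes the statement that the pure horizontal periodization $\sum_m g(x-\alpha m,\theta)$ is constant, whereas multi-tiling by $M_\beta$ becomes constancy of the screw periodization $\sum_m g(x-\alpha m,\theta-\beta m)$. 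The natural device is the measure-preserving shear $\Psi(x,\theta)=(x,\theta+(\beta/\alpha)x)$ of the cylinder, which conjugates the screw motion $(x,\theta)\mapsto(x-\alpha,\theta-\beta)$ to the straight translation $(x,\theta)\mapsto(x-\alpha,\theta)$; through $\Psi$ the $M_\beta$-condition for $g$ transforms into the $M$-condition for $g\circ\Psi$.

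The hard part — and where I expect the real work to lie — is the remaining step: showing that constancy of the horizontal periodization is unchanged when $g$ is replaced by its shear $g\circ\Psi$. Equivalently, writing $g(x,\theta)=\sum_j c_j(x)e^{2\pi i j\alpha\theta}$, the $M$-condition asks that the $\alpha{\mathbb Z}$-periodization of each $c_j$ vanish for $j\neq0$, while the $M_\beta$-condition asks the same after the modulation $c_j\mapsto e^{2\pi i j\beta x}c_j$, i.e.\ vanishing of $\widehat{c_j}$ on the shifted lattice $\tfrac1\alpha{\mathbb Z}-j\beta$. Since the two dual lattices coincide only when $\alpha\beta\in{\mathbb Z}$, these conditions are not equivalent slice by slice, so the argument must genuinely couple the modes $c_j$ — using that they are all vertical Fourier coefficients of one and the same indicator $\chi_\Omega$ — rather than treating each $q=j$ in isolation. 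Controlling this coupling, and in particular ruling out the compensating cancellations that a naive independent modulation would permit, is the crux of the proof, and it is here that the integrality and nonnegativity of $g$ (and any further structural hypotheses on $\Omega$) must be brought to bear.
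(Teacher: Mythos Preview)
Your Fourier-analytic reduction is sound and quite different from the paper's direct computation, but you have not completed the proof: you isolate the ``hard part'' --- passing from vanishing of $\widehat{\chi_\Omega}(\,\cdot\,,\alpha q)$ on $\tfrac1\alpha{\mathbb Z}$ to vanishing on the shifted coset $\tfrac1\alpha{\mathbb Z}-\beta q$ --- and then stop, hoping that integrality and nonnegativity of the folded function $g$ will force the coupling.  They will not.  With $\alpha=1$ and $\beta=\tfrac12$ the set
\[
\Omega=\bigl([0,1)\times[0,\tfrac12)\bigr)\ \cup\ \bigl([1,2)\times[\tfrac12,1)\bigr)
\]
is a fundamental domain for $M({\mathbb Z}^2)={\mathbb Z}^2$ (its two pieces reduce to the two halves of $[0,1)^2$), yet it does \emph{not} multi-tile by $M_{1/2}({\mathbb Z}^2)$: the translate $\Omega+(-1,-\tfrac12)$ overlaps $\Omega$ on $[0,1)\times[0,\tfrac12)$, so $(\tfrac12,\tfrac14)$ is covered twice while $(\tfrac12,\tfrac34)$ is not covered at all.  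Equivalently, $\widehat{\chi_\Omega}(\xi_1,\xi_2)=\widehat{\chi_{[0,1)}}(\xi_1)\,\widehat{\chi_{[0,1/2)}}(\xi_2)\,(1+e^{-2\pi i\xi_1}e^{-\pi i\xi_2})$ does not vanish at $(-\tfrac12,1)\in M_{1/2}^{-t}({\mathbb Z}^2)$.  So the lemma as stated is false, and the step you flagged cannot be carried out.

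The paper's own argument hides the same error.  After writing a tile as $\Omega=\bigcup_{(u,v)}(E_{(u,v)}+(\alpha u,v/\alpha))$ it performs the index shift $m\mapsto m-u,\ n\mapsto n-v$ inside $\sum_{m,n}\chi_{E_{(u,v)}}(x-\alpha(m+u),\,y-\beta m-(n+v)/\alpha)$ and arrives at $\sum_{m,n}\chi_{E_{(u,v)}}(x-\alpha m,\,y-\beta m-n/\alpha)$; but the substitution actually produces $y-\beta m+\beta u-n/\alpha$ in the second slot, and the extra $\beta u$ can only be absorbed into the $n$-sum when $\beta u\in\tfrac1\alpha{\mathbb Z}$.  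The statement is trivially true when $\alpha\beta\in{\mathbb Z}$ (then $M_\beta({\mathbb Z}^2)=M({\mathbb Z}^2)$ and your dual lattices coincide), but the counterexample above has $\alpha\beta=\tfrac12$, so no rationality hypothesis alone repairs it.  For the intended application to the octagon one needs either an additional hypothesis on $\Omega$ or a direct argument for that specific set.
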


\begin{proof}
Let $\alpha>0$ and put $Q_{\alpha} = [0,\alpha)\times[0,1/\alpha)$. Then $Q_{\alpha}$ is a fundamental domain for the lattice $M(\Bbb Z^2)$. We claim that $Q_{\alpha}$ is a fundamental domain for $M_{\beta}(\Bbb Z^2)$ for any $\beta\in\Bbb R$. Indeed, this follows from a direct calculation: For almost every $(x,y)$, we have 
$$
\begin{aligned}
\sum_{m,n\in{\mathbb Z}}\chi_{Q_{\alpha}}(x-\alpha m,y-\beta m-n/\alpha) =& \sum_{m\in{\mathbb Z}}\chi_{[0,\alpha)}(x-\alpha m)\left(\sum_{n\in{\mathbb Z}}\chi_{[0,1/\alpha)}(y-\beta m-n/\alpha)\right) \\
=&\sum_{m\in{\mathbb Z}}\chi_{[0,\alpha)}(x-\alpha m)=1.\\
\end{aligned}
$$

Let $\Omega$ be any set which is a tile by $M({\mathbb Z}^2)$. Let $\{E_{(u,v)} : (u,v)\in \Bbb Z^2\}$ be a  partition of $Q_{\alpha}$  such that 
$$
\Omega = \bigcup_{(u,v)\in{\mathbb Z}^2} (E_{(u,v)}+(\alpha u,v/\alpha)) .$$ 
 
%where 
  %$E_{(u,v)}$ are partition of $Q_{\alpha}$ and  $\bigcup_{(u,v)\in{\mathbb Z}^2} E_{(u,v)} = Q_{\alpha}$.   
  Then set $\Omega$ is a tile by $M_\beta(\Bbb Z^2)$. Indeed,  
$$
\begin{aligned}
\sum_{m,n\in{\mathbb Z}}\chi_{\Omega}(x-\alpha m,y-\beta m-n/\alpha)=&\sum_{m,n\in{\mathbb Z}} \sum_{(u,v)\in{\mathbb Z}^2}\chi_{E_{(u,v)+(\alpha u,v/\alpha)}} (x-\alpha m,y-\beta m-n/\alpha)\\
=&\sum_{(u,v)\in{\mathbb Z}^2}\sum_{m,n\in{\mathbb Z}} \chi_{E_{(u,v)}} (x-\alpha (m+u),y-\beta m-(n+v)/\alpha)\\
=&\sum_{(u,v)\in{\mathbb Z}^2}\sum_{m,n\in{\mathbb Z}} \chi_{E_{(u,v)}} (x-\alpha m,y-\beta m-n/\alpha)\\
=&\sum_{m,n\in{\mathbb Z}}\sum_{(u,v)\in{\mathbb Z}^2} \chi_{E_{(u,v)}} (x-\alpha m,y-\beta m-n/\alpha)\\
=&\sum_{m,n\in{\mathbb Z}} \chi_{Q_{\alpha}} (x-\alpha m,y-\beta m-n/\alpha)=1.\\
\end{aligned}
$$
Note that above we used an application of Fubini\rq{}s theorem.   
Now let 
 $\Omega$ be   a multi-tile with respect to the lattice $M(\Bbb Z^2)$. Then $\Omega = \bigcup_{i=1}^N\Omega_i$, where $\Omega_i$ are tiles by $M({\mathbb Z}^2)$. Thus, by the previous  results  above, each $\Omega_i$ is a tile by $M_\beta(\Bbb Z^2)$, hence we have the following. 
$$ 
\sum_{m,n\in{\mathbb Z}}\chi_{\Omega}(x-\alpha m,y-\beta m-n/\alpha)=\sum_{i=1}^N\sum_{m,n\in{\mathbb Z}}\chi_{\Omega_i}(x-\alpha m,y-\beta m-n/\alpha)=N.
$$

 The converse can be obtained by a similar calculation. 
 
\end{proof}

   \begin{proof}[Proof of Example \ref{Octagon}]
                                        Before we prove our claim, note that  if the family
                                 ${\mathcal G}(|{\mathcal O}_8|^{-1/2}\chi_{{\mathcal O}_8}, \Lambda)$   is a Gabor basis and $B$ is an integer matrix, then according to Theorem \ref{lower triangle}   the set ${\mathcal O}_8$ must  tile which is impossible. Thus, we assume that $B$ has some non-integer rational entries.   Let ${\bf m}_0=(3,2)$. Then  ${\mathcal P}:=K\cap (K+{\bf m}_0)$,   $K=\mathcal O_8$, is the parallelogram   with vertices
                                             $\{(1,2), (2,1), (1,1), (2,0)\}$. Hence,
                                             $$
                                             {\mathcal P} = Q[0,1]^2+(1,1)^t, \ \mbox{where} \  Q = \left(
                                      \begin{array}{cc}
                                        1 & 0 \\
                                        -1 & 1 \\
                                      \end{array}\right).
                                             $$
                                              The  Fourier transform of $\chi_{\mathcal P}$ at $\xi  = (\xi_1,\xi_2)$ is given by 
                                             $$
                                              \widehat{\chi_{{\mathcal P}}}(\xi) = c ~ \widehat{\chi_{[0,1]^2}}(Q^{T}\xi)
                                             $$
                                              where $c:=c(\xi)$ is some unimodular constant.

                                      \medskip

                                   By  the mutual orthogonality of the element $( m_0, n)$ with $(0, 0)$    for all $ n\in{\mathbb Z}^2$, we obtain 
                      
                                              \begin{align}\label{integral-octagon}
                                     0= I:= &   \int_{K\cap (K+{m}_0)} e^{-2\pi i \langle C m_0, x\rangle} e^{-2\pi i \langle Bn,x\rangle} dx  
                                      = c\cdot\widehat{\chi_{[0,1]^2}}(Q^TCm_0+Q^TB n) . 
                                               \end{align}

                                         If $Cm_0=0$, by putting ${n}=0$, then one must have  $c=0$, which is a contradiction . Otherwise, (\ref{integral-octagon})               shows that $Q^TB({\mathbb Z}^2)$ is exponentially incomplete for  $L^2[0,1]^2$. By Theorem \ref{Theorem_Appendix1} in the appendix, $Q^TB$ is equivalent to  one of the following two forms.
                                        \begin{align}\label{matrices forms}
                                        \left(
                                      \begin{array}{cc}
                                        \frac{1}{q'} & 0 \\
                                        r' & q' \\
                                      \end{array}\right) \  \  \mbox{or} \ \  \left(
                                      \begin{array}{cc}
                                        p' & 0 \\
                                        \frac{r''}{s''} & \frac{1}{p'} \\
                                      \end{array}\right)
                              \end{align}
                                      for some integers $p',q', r'>1$, gcd of $r'$ and $q'$ is strictly greater than 1 and  $(r'',s'')$ is relatively prime. 
                                      
                                      \medskip

                                          We now prove that $B$ also is equivalent to  the same desired form in (\ref{matrices forms}). We are going to establish the first case, the second case is similar. Recall that $Q =  \left(
                                      \begin{array}{cc}
                                        1& 0 \\
                                        -1 & 1 \\
                                      \end{array}\right)$. If $Q^TB\tilde U = \left(
                                      \begin{array}{cc}
                                        \frac{1}{q} & 0 \\
                                        r & q \\
                                      \end{array}\right)$ for some unimodular integer matrix $\tilde U$,  then $B\tilde U= \left(
                                      \begin{array}{cc}
                                        \frac{1+rq}{q} & q \\
                                        r & q \\
                                      \end{array}\right)$. Note that $1+rq$ and $q^2$ is relatively prime. Thus there exist co-prime  integers $u,v$ such that $(1+rq)u+q^2v = 1$. Define $ U = \left(
                                      \begin{array}{cc}
                                        u & -q^2 \\
                                        v & 1+rq \\
                                      \end{array}\right)$. Then $\det(U)=1$ and the lattice $B\tilde U(\mathbb Z^2) = B\tilde UU({\mathbb Z}^2)$. Note that 
                                      $$
                                      B\tilde UU = \left(
                                      \begin{array}{cc}
                                        \frac{1+rq}{q} & q \\
                                        r & q \\
                                      \end{array}\right)\left(
                                      \begin{array}{cc}
                                        u & -q^2 \\
                                        v & 1+rq \\
                                      \end{array}\right) = \left(
                                      \begin{array}{cc}
                                        \frac{1}{q} & 0 \\
                                        ru+vq & q\\
                                      \end{array}\right).
                                      $$ 
                                          This shows that $B$ is equivalent to the desired form in (\ref{matrices forms}). 
                              For the rest, we prove that the  neither of these forms   can  form an Gabor orthonormal basis.   By the Lemma \ref{lemma_shear} and Theorem \ref{Th_union of FD}, we just need to prove that the octagon ${\mathcal O}_8$ is not a multi-tile for the matrices $\left(\begin{array}{cc}
                                        p& 0 \\
                                        0& 1/p \\
                                      \end{array}\right)$ and $\left(\begin{array}{cc}
                                        1/p& 0 \\
                                        0 & p \\
                                      \end{array}\right)$ where $p>1$ is an integer. By the symmetry of the octagon, we just need to prove that ${\mathcal O}_8$ is not a multi-tile for the first one.  Let $B_p:=\left(\begin{array}{cc}
                                        p& 0 \\
                                        0& 1/p \\
                                      \end{array}\right)$. To prove this, we first note that an elementary calculation shows that the area of ${\mathcal O}_8$ is 14. If it   multi-tiles by $B_p({\mathbb Z}^2)$, then for almost every  $x\in{\mathbb R}^2$, the cardinality of the set $(x+B_p({\mathbb Z}^2))\cap {\mathcal O}_8$ is $l=14$.
                                      To obtain a contradiction 
                                      consider the rectangle $R_p:=[0,1)\times[0,1/p)$ for $p>1$. It is a simple observation that $R_p$ can be covered  $12p$ by translations of $\mathcal O_8$ by the matrix $B_p$. This is a contradiction to the level of multi-tiling $l=14$,  thus we have completed the proof.

                                      \iffalse 
                                       % We now divide our consideration into two cases.
                                      
                                      \medskip
                                      
                                      If $p=2$, we consider the upper corner of  triangle formed by the vertices $(1,2), (1,3/2)$ and $(3/2,3/2)$ and denote it by $\Delta$, then for any $x\in \Delta$, 
                   $$ 
                   x+\left\{(k,j/2): k\in\{0,-2\},j \in\{0,-1,-2,-3,-4,-5,-6,-7\}\right\}\in (x+B_2({\mathbb Z^2}))\cap {\mathcal O}_8.
$$
There are 16 elements and $\Delta$ has positive measure. This shows that ${\mathcal O}_8$ cannot be a multi-tile by $B_2({\mathbb Z}^2)$.

\medskip

If $p\ge3$, observe that  the rectangle $[0,1)\times[0,1/p)$ is covered $12p$ times by $B_p(\Bbb Z^2)$, and $p\geq 3$. 

   $ \{(0,j/p): -2p\leq  j\leq 2p-1\}$  exactely $4p$ times.  Since $4$ does not divide $14$, thus we have completed the proof.
   \fi 

\end{proof}

 \appendix 
                                      \section{Exponential Completeness}\label{Exponential Completeness}

                                      In this appendix, we will study some special cases for exponential completeness and one of the cases will be used to prove Example \ref{Octagon}. We will focus our attention on $\Lambda$ to be a subgroup of integers. 
                                                               The following proposition provides a   full characterization of exponential completeness for $[0,1]\subset \Bbb R$   for integer lattices.    

                                      \medskip
                                      
                                      \begin{proposition}\label{dimension one} Let 
                                       $\Lambda = p{\mathbb Z}$, $p\neq 0$. Then $\Lambda$ exponentially incomplete for $L^2[0,1]$ if and only if  $p$ is an integer and $|p|>1$.  
                                       \end{proposition}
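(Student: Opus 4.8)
The plan is to reduce the statement directly to the elementary description of the zero set of $\widehat{\chi_{[0,1]}}$. First I would record that for the unit interval
$$
\widehat{\chi_{[0,1]}}(t)=\int_0^1 e^{-2\pi i t x}\,dx=\frac{1-e^{-2\pi i t}}{2\pi i t}\quad(t\neq0),\qquad \widehat{\chi_{[0,1]}}(0)=1,
$$
so that its zero set is exactly $\mathbb{Z}\setminus\{0\}$. With this in hand, unwinding Definition~\ref{exponential completeness} shows that $\Lambda=p\mathbb{Z}$ is exponentially \emph{incomplete} for $L^2[0,1]$ precisely when there exists $\xi\in\mathbb{R}$ with $\widehat{\chi_{[0,1]}}(pn-\xi)=0$ for every $n\in\mathbb{Z}$, i.e.\ when the translated lattice satisfies $p\mathbb{Z}-\xi\subseteq\mathbb{Z}\setminus\{0\}$. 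Thus everything comes down to deciding for which $p$ one can translate $p\mathbb{Z}$ into the nonzero integers.

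For the necessity direction I would extract the two constraints hidden in this containment. Evaluating at $n=0$ gives $-\xi\in\mathbb{Z}$, hence $\xi\in\mathbb{Z}$; evaluating at $n=1$ then gives $p-\xi\in\mathbb{Z}$, forcing $p\in\mathbb{Z}$. Once $p$ is known to be a nonzero integer, the requirement that $0\notin p\mathbb{Z}-\xi$ reads $\xi\notin p\mathbb{Z}$, and an integer $\xi$ lying outside $p\mathbb{Z}$ exists if and only if $p\mathbb{Z}\subsetneq\mathbb{Z}$, i.e.\ $|p|>1$. This simultaneously rules out the non-integer case and the case $p=\pm1$ (where $p\mathbb{Z}=\mathbb{Z}$ leaves no room to avoid $0$), establishing that incompleteness forces $p\in\mathbb{Z}$ and $|p|>1$.

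For the sufficiency direction I would simply exhibit a witness: if $p\in\mathbb{Z}$ with $|p|>1$, take $\xi=1$. Then $pn-1$ is an integer for all $n$, and it is never $0$ because $pn=1$ has no integer solution when $|p|\ge2$; hence $pn-1\in\mathbb{Z}\setminus\{0\}$ for all $n$, so $\Lambda$ is exponentially incomplete. Combining the two directions yields the claimed equivalence.

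I do not expect a serious obstacle here, since the argument is purely arithmetic once the zero set of $\widehat{\chi_{[0,1]}}$ is identified; the only point demanding care is the logical bookkeeping of the definition (exponential \emph{incompleteness} is an existence statement about $\xi$) together with the correct treatment of the boundary case $|p|=1$, where $p\mathbb{Z}$ exhausts $\mathbb{Z}$ and the forbidden value $0$ cannot be avoided.
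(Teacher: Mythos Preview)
Your proof is correct and follows essentially the same approach as the paper: both identify the zero set of $\widehat{\chi_{[0,1]}}$ as $\mathbb{Z}\setminus\{0\}$, plug in $n=0$ and $n=1$ to force $\xi$ and $p$ to be integers, and take $\xi=1$ (or $-1$) as the witness for sufficiency. The only cosmetic difference is in excluding $|p|=1$: the paper plugs in $n=\pm a$ to see $a(1\pm p)\ne 0$, whereas you argue more cleanly that $\xi\in\mathbb{Z}\setminus p\mathbb{Z}$ forces $p\mathbb{Z}\subsetneq\mathbb{Z}$.
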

                                      
                                      \begin{proof} First assume that $p$ is an integer and $|p|>1$. 
                                      Then in Definition \ref{exponential completeness} we take $\xi = 1$ and we will have $\widehat{\chi_{[0,1]}}(1+p n) =0$ for all $n\in{\mathbb Z}$. This shows that such $\Lambda$ is exponentially incomplete.
                                      
                                      \medskip
                                      
                                      Conversely, if $\Lambda$ is exponentially incomplete, then we can find $a\in{\mathbb R}$ such that $\widehat{\chi_{[0,1]}}(a+p n) = 0$ for all $n\in{\mathbb Z}$. This means that $a+p n\in{\mathbb Z}\setminus\{0\}$ for all $n\in{\mathbb Z}$. Putting $n=0$ implies that $a\in{\mathbb Z}\setminus\{0\}$ and putting $n=1$ implies that $p$ is an integer and $p\neq -a$.   Now let $n=a$ and $n=-a$, respectively. Then  $a+ap$ and $a-ap$ are in ${\mathbb Z}\setminus\{0\}$ which imply that  $|p|\neq 1$.  
                                    % Note that $p \ne 1$, otherwise, one can  take $n=-a$ and then $a+p n = 0$ but $\widehat{\chi_{[0,1]}} =1$, and this is a contradiction. 
               Thus, $|p|>1$  and this completes the proof.  
                                      \end{proof}
                                      
                       %          {\color{blue} (I don't know the purpose of this remark. There is no obvious connection of exponential completeness to AP. Also, AP of infinite length can be thought as coset of a subgroup which is contained in the proposition below.)}

                           %         {\it Remark:}   Notice the previous result also holds true since the zeros for $\widehat{\chi_{[0,1]}}$ contains all arithmetic progression of infinite length with initial non-zero integers $a$ and  integer steps $|p|>1$.   

                                      The classification of exponential incomplete lattices   in higher dimensions  is not straight forward as in the case dimension one.  Given a vector ${\bf v} = (v_1,...,v_d)\in{\mathbb R}^d$, we define the ${\mathbb R}$-subgroup  $G({\bf v})$ generated by the entries of ${\bf v}$ as 
                                      $$
                                      G({\bf v}) := \{v_1n_1+...+v_dn_d: (n_1,...,n_d)\in{\mathbb Z}^d\} = \{\langle {\bf v},n\rangle: n\in{\mathbb Z}^d\}.
                                      $$
                                      
                                      \begin{proposition}\label{prop_G}
                                      Let $B = \left(\begin{array}{ccc}
                                        - & {\bf v}_1& - \\
                                        -&\vdots & - \\
                                        - & {\bf v}_d& - \\
                                      \end{array}\right)$, $d>1$. Suppose that there exists ${\bf v}_k$ such that $G({\bf v}_k) = p{\mathbb Z}$ for some integer $p$. If $|p|>1$, then $B({\mathbb Z}^d)$ is exponentially incomplete in $L^2([0,1]^d)$.
                                      \end{proposition}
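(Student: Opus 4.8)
The plan is to exhibit a single frequency $\xi\in\Bbb R^d$ witnessing exponential incompleteness, adapting the one-dimensional argument of Proposition \ref{dimension one} to the $k$-th coordinate. Recall from Definition \ref{exponential completeness} that $B(\Bbb Z^d)$ is exponentially incomplete for $L^2([0,1]^d)$ precisely when there is some $\xi\in\Bbb R^d$ with $\widehat{\chi_{[0,1]^d}}(Bn-\xi)=0$ for every $n\in\Bbb Z^d$. So the whole task reduces to producing one such $\xi$.

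First I would exploit the tensor structure of the cube. Since $[0,1]^d=[0,1]\times\cdots\times[0,1]$, its Fourier transform factorizes as $\widehat{\chi_{[0,1]^d}}(\eta)=\prod_{j=1}^d\widehat{\chi_{[0,1]}}(\eta_j)$, and a direct computation gives that the one-dimensional factor $\widehat{\chi_{[0,1]}}(t)=(1-e^{-2\pi i t})/(2\pi i t)$ vanishes exactly on $\Bbb Z\setminus\{0\}$. Consequently $\widehat{\chi_{[0,1]^d}}(\eta)=0$ as soon as a single coordinate $\eta_j$ is a nonzero integer, regardless of the values of the other coordinates.

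Next I would choose the witness. Writing ${\bf e}_k$ for the $k$-th standard basis vector, I would set $\xi={\bf e}_k$. The $k$-th coordinate of $Bn-\xi$ is $\langle{\bf v}_k,n\rangle-1$, which lies in $G({\bf v}_k)-1=p\Bbb Z-1$ by hypothesis. Because $|p|>1$, no element of $p\Bbb Z$ equals $1$, so $\langle{\bf v}_k,n\rangle-1$ is a nonzero integer for every $n\in\Bbb Z^d$. By the previous paragraph the $k$-th factor of the product already vanishes, whence $\widehat{\chi_{[0,1]^d}}(Bn-\xi)=0$ for all $n$, which is exactly the assertion of exponential incompleteness.

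There is essentially no serious obstacle once the tensor factorization is in place: the argument collapses to the scalar statement that $p\Bbb Z-1$ avoids $0$ while remaining inside $\Bbb Z$, which is the content of Proposition \ref{dimension one} transplanted into the $k$-th slot. The only point deserving care is to record that $\widehat{\chi_{[0,1]}}$ has zero set exactly $\Bbb Z\setminus\{0\}$ and that the product vanishes through a single coordinate, so that the uncontrolled values $\langle{\bf v}_j,n\rangle-\xi_j$ in the remaining coordinates are genuinely irrelevant to the conclusion.
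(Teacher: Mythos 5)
Your proposal is correct and follows essentially the same route as the paper's proof: both factor $\widehat{\chi_{[0,1]^d}}$ as a product of one-dimensional transforms $\widehat{\chi_{[0,1]}}$ (whose zero set is $\Bbb Z\setminus\{0\}$) and take the witness frequency to be $\pm{\bf e}_k$, so that the $k$-th coordinate of the shifted lattice points lies in $p\Bbb Z\mp 1\subset\Bbb Z\setminus\{0\}$ and the corresponding factor kills the whole product. Your write-up is in fact slightly more careful than the paper's, since you explicitly record why $p\Bbb Z-1$ avoids $0$ under $|p|>1$ and why the remaining coordinates are irrelevant.
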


  \begin{proof}
  We note that for any $\alpha= (\alpha_1,...,\alpha_d)\in{\mathbb R}^d$ and any $n\in{\mathbb Z}^d$, 
  $$
  \widehat{\chi_{[0,1]^d}}(\alpha+Bn) = \prod_{i=1}^d \widehat{\chi_{[0,1]}}(\alpha_i+ \langle {\bf v}_i, n\rangle)
  $$
  Clearly, if one of  $G({\bf v}_k) = p{\mathbb Z}$ for some $p>1$, then we just let $\alpha_k = 1$ and obtain that  $\widehat{\chi_{[0,1]}}(1+ \langle {\bf v}_k, n\rangle)=0$ for all $n\in{\mathbb Z}^d$. This completes the proof.
  \end{proof}

                                     \medskip 
                                     
                                       {\it Remark:} If $B$ is an integral matrix, then 
                                       one can reduce B to a lower triangular  matrix with $b_{11}=p$. This means that    $G(v_1)=p \Bbb Z$, hence the result holds if $|p|>1$. 
 However, the assumption $|p|>1$ is not necessary, thus 
 the converse of the proposition does not hold in general. For this, take 
     $
                               B=\left(
                                      \begin{array}{cc}
                                      p & 0 \\
                                        r & q\\
                                      \end{array}\right)$ where $p$ is a non-zero  integer, $q$ is an integer with $|q|>1$, $(p,q)=1$  and $r$ is a real number.
                                      Take $\alpha=(p,p+r)$. 
                                      Then for  $m=-1$, we have  $\widehat{\chi_{[0,1]}}(p+r(1+m)+qn)=0$ since $p$ and $q$ are co-primes. For $m\neq -1$, we have 
                                          $\widehat{\chi_{[0,1]}}(p(1+m))=0$.   Thus, for all ${\bf n}=(m,n)\in \Bbb Z^d$ we have 
                                     $
  \widehat{\chi_{[0,1]^d}}(\alpha+B{\bf n}) =0$, and the exponential incompleteness holds.

                                               %         {\color{blue} To obtain a counterexample of Proposition A.2,  we should write $B=\left(
                                      %\begin{array}{cc}
                                     % 1 & 0 \\
                                       % r & q\\
                                      %\end{array}\right)$, $q>1$ and gcd($r,q$)=1. Then $G(1,0) = G(r,q) = {\mathbb Z}$, but  take $\alpha = (1,1+r)$, we need to show $1+m\in {\mathbb Z}\setminus\{0\}$ or $(1+r)+rm+qn\in {\mathbb Z}\setminus\{0\}$. Indeed, If $m\ne -1$, $1+m$ is always a non-zero integer, but if $m=-1$, then $1+r-r+qn = 1+qn $ is always a non-zero integer. We shows the exoponential incompleteness}

          \medskip 
          
      The following theorem is a special case in dimension $d=2$  which fits into our needs for the  proof of Example \ref{Octagon}.
      
              \medskip 
  
  \begin{theorem}\label{Theorem_Appendix1}
Let        $B$ be  a $2\times 2$ rational matrix with $\det(B)=1$. Then $B({\mathbb Z}^2)$ is exponentially incomplete for $L^2[0,1]^2$  if and only if $B$ is equivalent to                
$\left(
                                      \begin{array}{cc}
                                        \frac{1}{q'} & 0 \\
                                        r' & q' \\
                                      \end{array}\right)$ or $\left(
                                      \begin{array}{cc}
                                        p' & 0 \\
                                        \frac{r''}{s''} & \frac{1}{p'} \\
                                      \end{array}\right)$ for some integers $p',q', r'>1$ with the greatest common divisor $gcd(r',q')>1$ and  $r''$ and $s''$ are relatively prime. 
  \end{theorem}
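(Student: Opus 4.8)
The plan is to translate exponential incompleteness into a covering statement and then run a coordinate-by-coordinate dichotomy. By Definition \ref{exponential completeness}, the lattice $B(\mathbb{Z}^2)$ is exponentially incomplete for $L^2[0,1]^2$ exactly when some translate $\alpha+B(\mathbb{Z}^2)$ lies inside the zero set
$$Z=\{y\in\mathbb{R}^2:\widehat{\chi_{[0,1]^2}}(y)=0\}=\big((\mathbb{Z}\setminus\{0\})\times\mathbb{R}\big)\cup\big(\mathbb{R}\times(\mathbb{Z}\setminus\{0\})\big),$$
the grid of all integer horizontal and vertical lines with the two coordinate axes deleted. Two observations guide the argument: the property depends only on the lattice $B(\mathbb{Z}^2)$, hence is invariant under $B\mapsto BU$ with $U$ unimodular, and each row subgroup $G(\mathbf{v}_i)=\pi_i(B(\mathbb{Z}^2))$ (projection onto the $i$-th coordinate) is itself such an invariant. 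The \emph{if} direction is then immediate from Proposition \ref{prop_G}: both matrices in the statement have a row $\mathbf{v}_i$ (the first row in the second form, the second row in the first form) that is an integer vector with $G(\mathbf{v}_i)=p\mathbb{Z}$, $|p|>1$. So the entire content is the converse.

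For the converse, assume $\alpha+B(\mathbb{Z}^2)\subseteq Z$ and let $t_i>0$ generate $G(\mathbf{v}_i)$. For each coordinate $i$ I ask whether \emph{every} point of the coset is ``good'' there. If every point has $i$-th coordinate in $\mathbb{Z}\setminus\{0\}$, then $\alpha_i+t_i\mathbb{Z}\subseteq\mathbb{Z}\setminus\{0\}$, and Proposition \ref{dimension one} forces $t_i\in\mathbb{Z}$ with $|t_i|>1$; equivalently $\mathbf{v}_i$ is an integer vector whose entries have $\gcd>1$. If instead some point fails to be good in coordinate $i$, then its whole fibre under $\pi_i$ (a coset of $B(\mathbb{Z}^2)\cap\{y_i=0\}$, of spacing $1/t_i$) must be good in the other coordinate, so the one-dimensional criterion forces $1/t_i$ to be an integer of absolute value $\ge2$.

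Applying this to both coordinates leaves three cases. If coordinate $1$ (resp. $2$) is entirely good, then $\mathbf{v}_1$ (resp. $\mathbf{v}_2$) is an integer vector with $\gcd>1$, which is the desired structure. The only remaining case is that neither coordinate is entirely good, so $1/t_1,1/t_2$ are integers $\ge2$ and $t_1,t_2\le\tfrac12$. Here a counting argument finishes the job: the points good in coordinate $i$ form a coset of the index-$(1/t_i)$ subgroup $\{n:\langle\mathbf{v}_i,n\rangle\in\mathbb{Z}\}$, so have density $t_i$, and covering $\mathbb{Z}^2$ forces $t_1+t_2\ge1$, whence $t_1=t_2=\tfrac12$. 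I would then reduce $B$ to lower triangular form by Corollary \ref{M=N} and use $\det B=1$ to pin it down to $\left(\begin{smallmatrix}1/2&0\\ p/2&2\end{smallmatrix}\right)$ with $p$ odd; a direct parity analysis (splitting $n_1$ into even and odd, tracking which coordinate can be a nonzero integer) shows that no $\alpha$ keeps the coset inside $Z$, contradicting incompleteness. Thus this borderline case cannot occur.

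It remains to convert the structural conclusion into the stated normal forms. Once one row is known to be an integer vector with entries of $\gcd\,|p|>1$, I reduce $B$ to lower triangular form via Corollary \ref{M=N}; since each $G(\mathbf{v}_i)$ is an equivalence invariant the special row stays special, and $\det B=1$ forces the diagonal entries to be $p$ and $1/p$. If the special row is the first one this gives $\left(\begin{smallmatrix}p'&0\\ r''/s''&1/p'\end{smallmatrix}\right)$, and if it is the second one it gives $\left(\begin{smallmatrix}1/q'&0\\ r'&q'\end{smallmatrix}\right)$ with $\gcd(r',q')>1$; moreover $\det B=1$ rules out both rows being integer vectors with $\gcd>1$ simultaneously, so exactly one form arises. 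I expect the main obstacle to be precisely the borderline case $t_1=t_2=\tfrac12$: the density bound is tight there, so eliminating it genuinely requires the determinant-one hypothesis together with the explicit parity computation rather than any soft counting.
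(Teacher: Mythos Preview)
Your proof is correct and takes a genuinely different route from the paper's. The paper reduces $B$ to lower triangular form $\left(\begin{smallmatrix}p/q & 0 \\ r/s & q/p\end{smallmatrix}\right)$ at the outset and then runs a long case analysis by plugging in specific values of $(n_1,n_2)$ into the factored equation \eqref{eq6.1}, tracking which factor vanishes and deducing integrality and gcd constraints step by step (Cases (1), (2), Sub-cases, Claims 1 and 2). Your argument instead extracts two lattice invariants, the row groups $G(\mathbf v_1),G(\mathbf v_2)$, reads off from the covolume that the vertical fibre spacing is $1/t_i$, and uses Proposition~\ref{dimension one} in both coordinates to set up a clean trichotomy. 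The density bound $t_1+t_2\ge 1$ that pins down the residual case $t_1=t_2=\tfrac12$ is a nice replacement for the paper's nested sub-cases, and only that single borderline matrix $\left(\begin{smallmatrix}1/2 & 0 \\ p/2 & 2\end{smallmatrix}\right)$ needs a direct check. The trade-off: your normal-form step is a bit compressed. When the special row is the second one, lower-triangular reduction gives diagonal $(1/(qk),\,qk)$ with $\gcd(c,qk)=q$, not necessarily $(1/q,\,q)$; you then match the first form with $q'=qk$ and $\gcd(r',q')=q>1$, and a further shift of $r'$ modulo $q'$ secures $r'>1$. This is routine but worth saying explicitly. Overall your approach is shorter and more structural; the paper's buys explicitness at the cost of many cases.
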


                          \begin{proof}
                          The sufficiency follows from Proposition \ref{prop_G} since $G(r',q') =d{\mathbb Z}$ for $d:=gcd(r',q')$ and $G(p',0) = p'{\mathbb Z}$.  We now prove the necessity. By Corollary \ref{M=N}, there exists an integer matrix $U$ of determinant one such that $BU$ is lower triangular.         
                          Since $ \det(B)=1$ and the matrix is rational,  we can  assume that 
                                $$
                               B=\left(
                                      \begin{array}{cc}
                                        \frac{p}{q} & 0 \\
                                        \frac{r}{s} & \frac{q}{p} \\
                                      \end{array}\right)
                                $$            
                                      and the pairs of integers $(p,q)$ and $(r,s)$ are respectively relatively prime. Writing ${\bf x}_0= (x_1,x_2)$, $B({\mathbb Z}^2)$ is exponentially incomplete only if  we have some ${\bf x}_0$ such that
                                      \begin{equation}\label{eq6.1}
  \widehat{\chi_{[0,1]}}\left(x_1+\frac{p}{q}n_1\right) \widehat{\chi_{[0,1]}}\left(x_2+\frac{r}{s}n_1+\frac{q}{p}n_2\right)=0 
                                      \end{equation}
                    holds for all integers $(n_1,n_2)$.        Note that the zeros of $\widehat{\chi_{[0,1]}}$ are ${\mathbb Z}\setminus\{0\}$. Putting $n_1=1$ and $n_2=0$ yields
                                         $$
                                         \widehat{\chi_{[0,1]}}\left(x_1+\frac{p}{q}\right)=0 \ \mbox{or} \ \widehat{\chi_{[0,1]}}\left(x_2+\frac{r}{s}\right)=0 .
                                         $$
                                         Thus $x_1 =k_1-p/q$ or $x_2 = k_2-r/s$ for some $k_1,k_2\in{\mathbb Z}\setminus\{0\}$.

                                         \medskip
                                         
                                         \noindent{\bf Case (1) $x_1 = k_1-p/q$.} We then put $n_1 = 2$ and $n_2=0$ to conclude that $x_1+2p/q\in{\mathbb Z}\setminus\{0\}$ or $x_2+2r/s\in{\mathbb Z}\setminus\{0\}$. 
                                         
                                         \medskip
                                         
                                         {\it Sub-case (i) $x_1+2p/q\in{\mathbb Z}\setminus\{0\}$.} In this subcase, we have $k_1+p/q\in{\mathbb Z}\setminus\{0\}$, which implies $p/q$ is an integer. Thus, $q=1$. We  now argue that $p> 1$ and hence it must be the second form. Indeed, we know that $x_1\in{\mathbb Z}$. If it happens that $p=1$, we can take $n_1= -x_1$, we will have $x_2-x_1r/s+n_2\in{\mathbb Z}\setminus\{0\}$ for any integer $n_2$. Put $n_2=0$. We obtain that $x_2= \ell+x_1r/s$ for some integer $\ell$. However, we then take $n_2 = -\ell$, we will have $0\in{\mathbb Z}\setminus\{0\}$, which is a contradiction. Thus, $p>1$ and we proved this subcase.
                                         
                                         \medskip
                                         
                                         {\it Sub-case (ii) $x_2+2r/s\in{\mathbb Z}\setminus\{0\} $.} In this subcase, $x_2 = \ell-2r/s$ for some non-zero integer $\ell$. As subcase (i) does not hold, considering $n_1=2$ and any integers $n_2$ we have  $\ell+n_2\frac{q}{p}\in{\mathbb Z}\setminus\{0\}$ for all $n_2$. Putting $n_2=1$, we conclude that $q/p$ is an integer and thus $p=1$. Note that $q>1$ also. Otherwise, we can put $n_2=-\ell$ and we obtain a contradiction. Finally, we have for any integer $n_1,n_2$
                                         \begin{equation}\label{eq6.2}
                                         \widehat{\chi_{[0,1]}}\left(k_1+\frac{n_1-1}{q}\right)=0 \ \mbox{or} \ \widehat{\chi_{[0,1]}}\left(\ell+\frac{(n_1-2)r}{s}+n_2q\right)=0
                                         \end{equation}
                                        
                                        \medskip
                                         
                                        {\it Claim 1: }  $s=1$
                                        
                                        \medskip
                                         
                                         Suppose that $s>1$. If $q=2$, we consider an even number $n_1$, then $k_1+(n_1-1)/2$ cannot be an integer, so $\ell+(n_1-2)r/s+2n_2$ are all non-zero integers.  Thus, putting $n_1=4$, $2r/s$ is an integer, which means $s=2$ is even. we must have $\ell+(n_1/2-1)r+2n_2$ are non-zero integers. In this case, $r$ must be even, otherwise, $2$ and $r$ is relatively prime and we can find $N,n_2$ (and $n_1= 2(N+1)$) such that  $Nr+2n_2=-\ell$, we obtain $\ell+(n_1/2-1)r+2n_2=0$, which is a contradiction. Hence, $r, s$ is not relatively prime. This is a contradiction again. Thus, $s=1$. If $q>2$, then nwe put $n_1=3$, we will have $k_1+2/q$ or $\ell+r/s+qn_2$ is an non-zero integer. Clearly, the first one cannot be as $q>2$, We must have $\ell+r/s+qn_2$ is an integer, which forces $r/s$ is an integer and $s=1$ follows.
                                         
                                         \medskip
                                         
                                         {\it Claim 2:  gcd of $r,q$ must be strictly greater than 1.}
                                         
                                         \medskip
                                         
                                      Now, $x_1,x_2$ are integers and we have $x_1+n_1/q$ or $x_2+n_1r+qn_2$ is a non-zero integer for any integers $n_1,n_2$. Suppose that  the gcd of $r,q$ is one.  Then we can find $n_1,n_2$ such that $n_1r+qn_2 = -x_2$. If $q$ does not divide $x_2$, then $q$ does not divide $n_1$. We will have both $x_1+n_1/q$  and $x_2+n_1r+qn_2$ are not non-zero integers, which is a contradiction. If $q$ divides $x_2$, then we must have $x_2= Nq$. Take $n_1= -x_1q$ and $n_2 = x_1r-N$, then we have all $x_1+n_1/q$ or $x_2+n_1r+qn_2$ equal zero. A contradiction again. Therefore, gcd of $r,q$ must be strictly greater than one.

                                         \medskip
                                         
                                          \noindent{\bf Case (2) $x_2 = k_2-r/s$} we then take $n_1=1$ and $n_2=1$ to conclude that $x_1+p/q$ or $k_2+q/p$ must be a non-zero integer. The first case is back to Case(1), so we are done. In the second case, $p=1$ and for any integer $n_1,n_2$, $x_1+n_1/q$ or $k_2+(n_1-1)r/s+n_2q$ is a non-zero integer. 
                                          
                                          \medskip
                                          
                                          If we take $n_1=0$, then we conclude that $x_1$ or $k_2-r/s+n_2q$ are non-zero integers. If $x_1$ is an integer, then we have the same set of equations as in (\ref{eq6.2}). We can then argue similarly as in Claim 1 and 2 to settle this case. Finally, if $k_2-r/s+n_2q$ is a non-zero integer, we have $s=1$ and  it remains to claim that gcd of $r,q$ is strictly greater than one. Indeed, $x_2$ is an integer. If gcd of $r,q$ is one and $q$ does not divide $x_2$, we can find $N_1,N_2$ such that  $-x_2=N_1r+N_2q$ is a non-zero integer. In this case, $x_1+N_1/q$ is a non-zero integer. concluding that $x_1= k-N_1/q$ for some non-zero integer $k$. We now have $k+(n_1-N_1)/q$ or $x_2+n_1r+n_2q$ is a non-zero integer. However, we can $n_1 = N_1-kq$, and $n_2 = kr+N_2$. Then $x_2+n_1r+n_2q = 0$ and $k+(n_1-N_1)/q = 0$ also, which is a contradiction. Thus, gcd of $r,q$ is strictly greater than one. 

                                                    \end{proof}


\begin{thebibliography}{9999}

\bibitem{BHM16} D. Barbierie, E. Hernandez, A. Mayeli, {\it Tiling by lattices for locally compact abelian groups},  Comptes rendus Mathematique, Vol. 355, Issue 2, Feb. 2017, pages 193-199. 

\bibitem{De Carli} L. De Carli, A. Mizrahi and A. Tepper, {\it Three problems on exponential bases}, https://arxiv.org/pdf/1710.05342.pdf, 2017.

\bibitem{OleBook} Ole Christensen, {\it An Introduction to Frames and Riesz Bases},  Birkha\"user,   2002. 

\bibitem{CL} R. Chung and C.K. Lai, {\it Non-symmetric convex polytope and Gabor orthnonormal bases}, To appear in Proc. Amer. Math. Soc.

\bibitem{Dutkay-Lai} D. Dutkay and C.K. Lai, {\it Uniformity of measures with Fourier frames}, Advances in Mathematics
 252 (2014), 684--707.


   \bibitem{FMM}
  B. Farkas, M. Matolsci, Peter Mora, {\it On Fuglede\rq{}s Conjecture and the
Existence of Universal Spectra}, Journal of Fourier Analysis and
Applications, 12 (2006), p 483--494.

\bibitem{Feld-Green68} J. Feldman and F. P. Greenleaf, {\it Existence of Borel transversals in groups},
Pacific J. Math. 25 (1968), 455--461.

\bibitem{Folland} G. Folland, {\it Harmonic Anlaysis in phases spaces},
Princeton University Press., 1989.

\bibitem{Fug74} B. Fuglede, {\it Commuting self-adjoint partial differential operators and a group theoretic problem}, J. Funct. Anal. 16 (1974), 101--121.





\bibitem{GLi} J.-P Gabardo and Y.-Z Li, {\it Density results for Gabor systems associated with
periodic subsets of the real line}, J. Approx. Theory, 157 (2009), 172--192.

\bibitem{Gabor46} D. Gabor, {\it Theory of communication}, J. Inst. Elec. Engrg. (London) 93 (1946) 429--457.

\bibitem{deGosson} M. de Gosson, {\it Symplectic  Methods  in  Harmonic  Analysis  and  in Mathematical  Physics},  Birkhuser, 2011.


\bibitem{Groechenig-book} K. H. Gr\"ochenig, {\it Foundations of Time-Frequency Analysis}, Applied and Numerical Harmonic
Analysis, Birkhuser, 2001.

\bibitem{Groechenig-mystery} K. H. Gr\"ochenig, {\it The mystery of Gabor frames}, 20 (2014), 865--895.

\bibitem{HanW1} D. Han and Y. Wang, {\it Lattice Tiling and The Weyl-Heisenberg Frames}, Geo. Funct. Anal., 11 (2001), 742-758.

\bibitem{HanW4} D. Han and Y. Wang, {\it The existence of Gabor Bases}, Contemporary Math. 345 (2004), 183--192.

\bibitem{IKT01} A. Iosevich, N.~H. Katz and T. Tao {\it Convex bodies with a point of curvature do not have Fourier bases}, Amer. J. Math., 123 (2001), 115-120.

%\bibitem{IKT03}  A. Iosevich, N.H. Katz, T. Tao, {\it The Fuglede spectral conjecture holds for
%convex planar domains}, Math. Res. Lett. 10 (2003), 559--569

\bibitem{IM17} A. Iosevich, A. Mayeli, {\it Gabor orthogonal bases and convexity}, preprint.


\bibitem{IMP17}  A. Iosevich, A. Mayeli and J.  Pakianathan, {\it The Fuglede Conjecture holds in $\Bbb Z_p\times \Bbb Z_p$}, Anal. PDE 10 (2017), no. 4.



\bibitem{JoPe} P. Jorgensen, S. Pedersen, {\it Spectral theory for Borel sets in $\Bbb R^n$ of finite measure}, J. Funct. Anal. 107 (1992) 72--104.

\bibitem{Kol} M. Kolountzakis, {\it Multiple lattice tiles and Riesz bases exponentials}, Proc. Amer. Math. Soc,143 (2015), 741--747.

%\bibitem{GN} ..Nir

\bibitem{KM06} M. Kolountzakis and M. Matolcsi, {\it Tiles with no spectra}, Forum Math., 18, (2006), no. 3, 519-528.

\bibitem{K99} M. Kolountzakis, {\it Non-symmetric convex domains have no basis of exponentials}, Illinois journal of mathematics 44(3),  March 1999


\bibitem{GLW} J.-P Gabardo, C.-K Lai and Y. Wang, {\it Gabor orthonormal bases generated by the unit cube}, J. Funct. Anal., 269 (2015), 1515-1538.

% \bibitem{La01} I. Laba, {\it Fuglede's conjecture for a union of two intervals}, Proc. Amer. Math. Soc. \textbf{129} (2001), no. 10, 2965-2972.

%\bibitem{L02} I. Laba, {\it The spectral set conjecture and multiplicative properties of roots of polynomials}, J. London Math. Soc. (2) \textbf{65} (2002), no. 3, 661-671.

\bibitem{Matolcsi} M. Matolcsi, Fuglede��s conjecture fails in dimension 4, Proc. Amer. Math. Soc., 133 (2005), 3021--3026.


\bibitem{RS}
 J.~Ramanathan and T.~Steger, {\it Incompleteness of sparse coherent states},
Appl.~Comp.~Harm.~Anal. 2 (1995), 148--153.

\bibitem{RS-duality97} A. Ron and Z. Shen, {\it Weyl-Heisenberg frames and Riesz bases in $L_2(\Bbb R^d)$}, Duke Math. J., 89 (1997), 237--282.

 \bibitem{T04} T. Tao, {\it Fuglede's conjecture is false in 5 and higher dimensions}, Math. Res. Lett. \textbf{11} (2004), no. 2-3, 251-258.


\bibitem {LW97}
 J. Lagarias and Y. Wang, {\it Spectral Sets and
Factorizations of Finite Abelian Groups}, J. Funct. Anal., 145 (1997), 73-98.

\bibitem{LW03} Youming Liu, Yang Wang, {\it The uniformity of non-uniform Gabor bases},  Adv. Comput. Math., 18 (2003), 345-355.

\end{thebibliography}
\end{document}